\newtheorem{Theorem}{Theorem}[section]
\newtheorem{Lemma}[Theorem]{Lemma}
\newtheorem{Corollary}[Theorem]{Corollary}
\newtheorem{Proposition}[Theorem]{Proposition}
\newtheorem{Remark}[Theorem]{Remark}
\newtheorem{Example}[Theorem]{Example}
\newtheorem{Definition}[Theorem]{Definition}
\newtheorem{Question}[Theorem]{Question}
\newcommand{\M}{\mathfrak{m}}
\def\sqr#1#2{{\vcenter{\hrule height.#2pt
\hbox{\vrule width.#2pt height#1pt \kern#1pt \vrule width.#2pt}
\hrule height.#2pt}}}
\def\qed{\hspace*{\fill} $\square$}
\begin{document}

\title[Reduction numbers and regularity of blowup rings and modules]{On reduction numbers and Castelnuovo-Mumford regularity of blowup rings and modules}

\author{Cleto B.~Miranda-Neto}
\address{Departamento de Matemática, Universidade Federal da
Paraíba, 58051-900 João Pessoa, PB, Brazil.}
\email{cleto@mat.ufpb.br}

\author{Douglas S.~Queiroz}
\address{Departamento de Matemática, Universidade Federal da
Paraíba, 58051-900 João Pessoa, PB, Brazil.}
\email{douglassqueiroz0@gmail.com}

\thanks{
Corresponding author: C. B. Miranda-Neto.}

\subjclass[2010]{Primary: 13C05, 13A30, 13C13; Secondary: 13H10, 13D40, 13D45.}
\keywords{Castelnuovo-Mumford regularity, reduction number, blowup algebra, Ratliff-Rush closure, Ulrich ideal}

\begin{abstract} We prove new results on the connections between reduction numbers and the Castelnuovo-Mumford regularity of blowup algebras and blowup modules, the key basic tool being the operation of Ratliff-Rush closure. First, we answer in two particular cases a question of  M. E. Rossi, D. T. Trung, and N. V. Trung about Rees algebras of ideals in two-dimensional Buchsbaum local rings, and we even ask whether one of such situations always holds. In another theorem we generalize a result of A. Mafi on ideals in two-dimensional Cohen-Macaulay local rings, by extending it to arbitrary dimension (and allowing for the setting relative to a Cohen-Macaulay module). We derive a number of applications, including a characterization of (polynomial) ideals of linear type, progress on the theory of generalized Ulrich ideals, and improvements of results by other authors. 


\end{abstract}

\maketitle

\vspace{-0.1in}


\centerline{\it Dedicated to the memory of Professor Wolmer Vasconcelos,} 
\centerline{\it mentor of generations of commutative algebraists.}

\medskip

\section{Introduction}

This paper aims at further investigating the interplay between reduction numbers, eventually taken relative to a given module, and the Castelnuovo-Mumford regularity of Rees and associated graded algebras and modules. The basic tool is the so-called Ratliff-Rush closure, a well-known operation which goes back to the classical work \cite{Ratliff-Rush}, and also  \cite{Heinzer-Johnson-Lantz-Shah} for the relative case. These topics have been studied for decades and the literature about them is extensive; we mention, e.g., \cite{AHT}, \cite{Brodmann-Linh}, \cite{CPR},
\cite{D-H}, \cite{H-P-T}, \cite{J-U}, \cite{Linh}, \cite{Mafi}, \cite{Marley2}, \cite{Pol-X}, \cite{Puthenpurakal1}, \cite{Rossi-Dinh-Trung}, \cite{R-T-V}, \cite{Strunk}, \cite{Trung}, \cite{Vasc}, \cite{Wu}, \cite{Zamani}.

Let us briefly describe our two main motivations. First, an interesting question raised by Rossi, Trung, and Trung \cite{Rossi-Dinh-Trung} concerning the Castelnuovo-Mumford regularity of the Rees algebra of a zero-dimensional ideal in a 2-dimensional Buchsbaum local ring, and second, a result due to Mafi \cite{Mafi} on the Castelnuovo-Mumford regularity of the associated graded ring of a zero-dimensional ideal in a 2-dimensional Cohen-Macaulay local ring, both connected to the reduction number of the ideal. For the former (see Section \ref{R-T-T}) we provide answers in a couple of special cases, and we even ask whether one of the situations always occurs. For the latter (see Section \ref{gen-of-Mafi}), we are able to establish a generalization which extends dimension $2$ to arbitrary dimension and in addition takes into account the more general context of blowup modules (relative to any given Cohen-Macaulay module of positive dimension).

We then provide two sections of applications. In Section \ref{app} we show, for instance, that under appropriate conditions the Castelnuovo-Mumford regularity of Rees modules is not affected under regular hyperplane sections, and moreover, by investigating the role of postulation numbers with a view to the regularity of Rees algebras, we derive an improvement (by different arguments) of a result originally obtained in \cite{Hoa} on the independence of reduction numbers. Still in Section \ref{app} we provide an application regarding ideals of linear type (i.e., ideals for which the Rees and symmetric algebras are isomorphic) in a standard graded polynomial ring over an infinite field. Precisely, if $I$ is a zero-dimensional ideal with reduction number zero, then $I$ is of linear type (the converse is easy and well-known). Finally, in Section \ref{Ulrich}, we establish new facts on the theory of generalized Ulrich ideals introduced in \cite{Goto-Ozeki-Takahashi-Watanabe-Yoshida}. Given such an ideal in a Cohen-Macaulay local ring (of positive dimension), we determine the Castelnuovo-Mumford regularity of its blowup algebras and describe explicitly its Hilbert-Samuel polynomial and postulation number, which as far as we know are issues that have not been considered in the literature. We also correct a result from \cite{Mafi2} and improve results of \cite{Hoa}, \cite{Huneke} and \cite{Itoh} (where extra hypotheses were required), regarding a well-studied link between low reduction numbers and Hilbert functions. In addition, by detecting a normal Ulrich ideal in the ring of a rational double point, we give a negative answer to the 2-dimensional case of a question posed over fifteen years ago in \cite{CPR}.

\medskip

\noindent {\it Conventions}. Throughout the paper, by {\it ring} we mean commutative, Noetherian, unital ring. If $R$ is a ring, then by a {\it finite} $R$-module we mean a finitely generated $R$-module.

\section{Ratliff-Rush closure}


The concept of the so-called \textit{Ratliff-Rush closure} $\widetilde{I}$
of a given ideal $I$ in a ring $R$ first appeared in the investigation of reductions of ideals developed in \cite{Ratliff-Rush}. Precisely,
$$\widetilde{I} \, = \, \bigcup_{n \geq 1}\,I^{n+1} : I^{n}.$$
This is an ideal of $R$ containing $I$ which refines the integral closure of $I$, so that $\widetilde{I}=I$ whenever $I$ is integrally closed. Now suppose $I$ contains a regular element (i.e., a non-zero-divisor on $R$). Then $\widetilde{I}$ is the largest ideal that shares with $I$ the same sufficiently high powers. Moreover, if $M$ is an $R$-module then \cite[Section 6]{Heinzer-Johnson-Lantz-Shah} defined the \textit{Ratliff-Rush closure of $I$ with respect to $M$} as
$$\widetilde{I_{M}} \, = \, \bigcup_{n \geq 1}\,I^{n + 1}M :_{M} I^{n} \, = \, \{m \in M \, \mid \, I^{n}m \subseteq I^{n + 1}M \ \mathrm{for \ some \ n\geq 1} \}.$$ Clearly, this retrieves the classical definition by letting $M = R$. Furthermore, $\widetilde{I_{M}}$ is an $R$-submodule of $M$, and it is easy to see that $IM\subseteq \widetilde{I}M  \subseteq \widetilde{I_{M}}$. If the equality $IM=\widetilde{I_{M}}$ holds, the ideal $I$ is said to be {\it Ratliff-Rush closed with respect to $M$}; in case $M=R$, we simply say that $I$ is {\it Ratliff-Rush closed} (some authors also use the expression {\it Ratliff-Rush ideal}).

\begin{Lemma} $($\cite[Proposition 2.2]{Naghipour}\label{propnaghipour}$)$ \label{lemmanaghipour} Let $R$ be a ring and $M$ be a non-zero finite $R$-module. Assume that $I$ is an ideal of $R$ containing an $M$-regular element {\rm (}i.e., a non-zero-divisor on $M${\rm )} and such that $IM \neq M$ {\rm (}e.g., if $R$ is local{\rm )}. Then the following conditions hold:

	\begin{itemize}
		\item[(a)] For an integer $n \geq 1$, $\widetilde{I^{n}_{M}}$ is the eventual stable value of the increasing sequence
		$$(I^{n + 1}M :_{M} I) \, \subseteq \, (I^{n + 2}M :_{M} I^{2}) \, \subseteq \, (I^{n + 3}M :_{M} I^{3}) \, \subseteq \, \cdots$$
		\item[(b)] $\widetilde{I_{M}} \supseteq \widetilde{I^{2}_{M}} \supseteq \widetilde{I^{3}_{M}} \supseteq \cdots \supseteq \widetilde{I^{n}_{M}} = I^{n}M$ for all $n\gg 0$, and hence $I^{n}$ is Ratliff–Rush closed with respect to $M$.
	\end{itemize}
\end{Lemma}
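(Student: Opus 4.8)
The plan is to establish both statements by analyzing the ascending chains $(I^{n+k}M :_M I^k)_{k \geq 1}$ for fixed $n$, exploiting the Artin--Rees lemma and the presence of an $M$-regular element.

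First I would prove part (a). Fix $n \geq 1$. The inclusion $(I^{n+k}M :_M I^k) \subseteq (I^{n+k+1}M :_M I^{k+1})$ is immediate: if $I^k m \subseteq I^{n+k}M$, then $I^{k+1}m \subseteq I \cdot I^{n+k}M = I^{n+k+1}M$. Since $M$ is Noetherian, this ascending chain of submodules stabilizes, say from index $k_0$ on; call the stable value $N$. I claim $N = \widetilde{I^n_M}$. One inclusion is clear from the definition of $\widetilde{I^n_M}$ as the union $\bigcup_{k \geq 1} (I^{n(k+1)}M :_M I^{nk})$ — every element of each $(I^{n+k}M :_M I^k)$ satisfies $(I^n)^? m \subseteq (I^n)^{?+1}M$ after raising to a suitable power, hence lies in $\widetilde{I^n_M}$; more carefully, $I^k m \subseteq I^{n+k}M$ gives $I^{nk}m \subseteq I^{nk + n - k}\cdot I^k m$... here one must be a little careful and instead argue: if $I^k m \subseteq I^{k+n}M$ then iterating, $I^{jk}m \subseteq I^{jk+jn}M \subseteq (I^n)^{j}I^{jk}M$, wait — the clean statement is that $I^k m \subseteq I^{n+k}M$ implies $I^{tk}m \subseteq I^{tn+tk}M = (I^n)^t I^{tk}M$ for all $t$, which shows $m \in \widetilde{I^n_M}$ since $(I^n)^{tk}m \subseteq (I^n)^{t}\cdot(\text{stuff})$; picking $t$ with $tk \geq$ the relevant bound does it. For the reverse inclusion, take $m \in \widetilde{I^n_M}$, so $(I^n)^s m \subseteq (I^n)^{s+1}M$ for some $s$, i.e. $I^{ns}m \subseteq I^{ns+n}M = I^{n + ns}M$, which places $m$ in the chain at index $k = ns$, hence in $N$. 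So $N = \widetilde{I^n_M}$, proving (a).

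For part (b), the descending inclusions $\widetilde{I^n_M} \supseteq \widetilde{I^{n+1}_M}$ follow because $\widetilde{I^{n+1}_M} = \bigcup_k (I^{(n+1)(k+1)}M :_M I^{(n+1)k})$ and each such colon module is contained in some $(I^{n+j}M :_M I^j)$, hence in $\widetilde{I^n_M}$ by part (a); alternatively, $m \in \widetilde{I^{n+1}_M}$ means $I^{(n+1)s}m \subseteq I^{(n+1)(s+1)}M$, and since $(n+1)(s+1) \geq n + (n+1)s$, this gives $I^{(n+1)s}m \subseteq I^{n + (n+1)s}M$, so $m \in \widetilde{I^n_M}$ by the chain description in (a). The crucial point — and the main obstacle — is showing $\widetilde{I^n_M} = I^n M$ for $n \gg 0$. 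Here I would invoke the Artin--Rees lemma: there is an integer $c$ such that $I^n M \cap Q = I^{n-c}(I^c M \cap Q)$ for all $n \geq c$, applied to a suitable submodule $Q$; but the cleanest route uses the $M$-regular element $x \in I$. Combining Artin--Rees with the hypothesis $IM \neq M$ (so that, by Nakayama-type reasoning or the regular element, the Rees module $\bigoplus_n I^n M$ behaves well), one shows that for $n$ large, any $m$ with $I^s m \subseteq I^{n+s}M$ for some $s$ already lies in $I^n M$. Concretely: $\widetilde{I^n_M}/I^n M$ is killed by a power of $I$, and one checks via Artin--Rees that $\bigcup_n \widetilde{I^n_M}/I^nM$ corresponds to a finitely generated graded module over the Rees algebra supported away from the irrelevant ideal in high degrees, forcing $\widetilde{I^n_M} = I^nM$ eventually. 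The role of the $M$-regular element $x$ is to guarantee $I^{n+k}M :_M x^k = I^n M$ stabilizes correctly — indeed $x$ being $M$-regular forces the chain in (a) for the principal reduction behavior to collapse. The identity "$I^n$ is Ratliff--Rush closed with respect to $M$ for $n \gg 0$" is then just a restatement: $\widetilde{(I^n)_M} = \widetilde{I^{n}_M}$ wait — one must note $\widetilde{(I^n)_M} \supseteq \widetilde{I^{n}_M}$? No: $\widetilde{(I^n)_M} = \bigcup_k ((I^n)^{k+1}M :_M (I^n)^k) \subseteq \widetilde{I^{n}_M}$ trivially since these colon modules appear among those defining $\widetilde{I^n_M}$, and conversely $\widetilde{I^n_M} \subseteq \widetilde{(I^n)_M}$ by (a); so they coincide, and equal $I^n M$ for $n \gg 0$, which is exactly the assertion that $I^n$ is Ratliff--Rush closed with respect to $M$.

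Since this is \cite[Proposition 2.2]{Naghipour}, I would in practice cite it, but the self-contained argument above via the stabilizing chains plus Artin--Rees is the natural approach; the only genuinely delicate step is extracting the uniform-in-$n$ stabilization $\widetilde{I^n_M} = I^n M$ from the relative Artin--Rees lemma together with the existence of the $M$-regular element in $I$.
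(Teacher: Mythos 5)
The paper gives no proof of this lemma --- it is quoted directly from Naghipour's Proposition 2.2 --- so simply citing it, as you offer to do at the end, matches what the paper does. Judged as a self-contained argument, your treatment of part (a) and of the descending chain in (b) is essentially sound, but with one false intermediate claim: from $I^{k}m \subseteq I^{n+k}M$ you cannot deduce $I^{tk}m \subseteq I^{tn+tk}M$; iterating only yields $I^{2k}m \subseteq I^{k}\cdot I^{n+k}M = I^{n+2k}M$, i.e.\ you gain a single factor of $I^{n}$ per step, not $t$ of them. The inclusion you actually need is a one-liner by a different route: choose $s$ with $ns \geq k$ and multiply $I^{k}m \subseteq I^{n+k}M$ by $I^{ns-k}$ to get $(I^{n})^{s}m = I^{ns}m \subseteq I^{ns+n}M = (I^{n})^{s+1}M$, hence $m \in \widetilde{I^{n}_{M}}$. (Also, with the paper's conventions $\widetilde{I^{n}_{M}}$ \emph{is by definition} $\bigcup_{k}\bigl((I^{n})^{k+1}M :_{M} (I^{n})^{k}\bigr)$, the Ratliff--Rush closure of the ideal $I^{n}$ relative to $M$, so your closing hesitation about comparing it with ``$\widetilde{(I^n)_M}$'' is vacuous.)

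The genuine gap is the key assertion of (b), namely $\widetilde{I^{n}_{M}} = I^{n}M$ for all $n \gg 0$ --- precisely the fact the rest of the paper relies on to make $s^{*}(I,M)$ well defined --- and your argument for it is only a gesture. The two ingredients you actually put on the table do not combine to a proof. Artin--Rees gives a $c$ with $I^{m}M :_{M} x \subseteq I^{m-c}M$ for $m \geq c$, but peeling $x$ off $I^{n+k}M :_{M} x^{k}$ one power at a time then loses $c$ at each of the $k$ steps and lands only in $I^{n+k-kc}M$, which is vacuous once $c \geq 2$. And the assertion that $\bigoplus_{n}\widetilde{I^{n}_{M}}/I^{n}M$ ``corresponds to a finitely generated graded module over the Rees algebra'' is exactly the nontrivial point: a priori $\bigoplus_{n}\widetilde{I^{n}_{M}}$ is only a graded submodule of $M[t]$, which is not a Noetherian $\mathcal{R}(I)$-module, so its finite generation requires an argument you have not supplied. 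A workable use of the $M$-regular element $x \in I$ is different from what you sketch: for instance, multiplication by $(xt)^{k}$ carries $\bigoplus_{n}(I^{n+k}M :_{M} I^{k})$ injectively into the Noetherian module $\mathcal{R}(I,M)$ (since $x^{k}(I^{n+k}M :_{M} I^{k}) \subseteq x^{k}(I^{n+k}M :_{M} x^{k}) \subseteq I^{n+k}M$), which is the kind of device needed to get stabilization uniformly in $n$; alternatively, when superficial elements are available one uses $I^{m}M :_{M} x = I^{m-1}M$ for $m \gg 0$ and peels off $x$ without losing degrees. As written, the step you yourself flag as the only delicate one is not closed, so the self-contained proof is incomplete and the citation remains the safe option.
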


Note that, if the ideal $I$ contains an $M$-regular element, Lemma \ref{propnaghipour}(b) enables us to define the number $$s^{*}(I,M) \, := \, \mathrm{min}\,\{n \in \mathbb{N} \, \mid \, \widetilde{I^{i}_{M}} = I^{i}M \ \mathrm{for \ all} \ i \geq n \},$$ which we simply write $s^{*}(I)$ if $M=R$. Since the equality $\widetilde{I^{i}_{M}} = I^{i}M$ holds trivially for $i=0$, we have that $s^{*}(I,M)\geq 0$ if and only if $s^{*}(I,M)\geq 1$. Thus we can establish that, throughout the entire paper, $$s^{*}(I,M) \, \geq \, 1.$$

\begin{Definition}\rm
	Let $R$ be a ring, $I$ an ideal of $R$, and $M$ an $R$-module. We say that $x \in I$ is an $M$-\textit{superficial element of $I$} if there exists $c \in \mathbb{N}$ such that $(I^{n + 1}M :_{M} x)  \cap  I^{c}M  =  I^{n}M$, for all $n \geq c$. If $x$ is $R$-superficial, we simply say that $x$ is a \textit{superficial element of $I$}. A sequence $x_{1}, \ldots , x_{s}$ in $I$ is said to be an \textit{$M$-superficial sequence of $I$} if $x_1$ is an $M$-superficial element of $I$ and, for all $i = 2, \ldots , s$, the image of $x_{i}$ in $I/(x_{1}, \ldots , x_{i - 1})$ is an $M/(x_{1}, \ldots , x_{i - 1})M$-superficial element of  $I/(x_{1}, \ldots , x_{i - 1})$.
\end{Definition}

If $R$ is local, the above concept allows for an alternative, useful expression for $s^{*}(I,M)$. Indeed, in this case, \cite[Corollary 2.7]{Puthenpurakal1} shows that
$$s^{*}(I,M) \, = \, \mathrm{min}\,\{n \in \mathbb{N} \, \mid \, I^{i + 1}M :_{M} x = I^{i}M \ \mathrm{for \ all} \ i \geq n \},$$
for any given $M$-superficial element $x$ of $I$.

\begin{Lemma}$($\cite[Lemma 8.5.3]{Swanson-Huneke}\label{lemmaswanhun1}$)$
	Let $R$ be a ring, $I$ an ideal of $R$, $M$ a finite $R$-module, and $x \in I$ an $M$-regular element. Then, $x$ is an $M$-superficial element of $I$ if and only if $I^{n}M\colon_Mx = I^{n - 1}M$ for all $n\gg 0$.
\end{Lemma}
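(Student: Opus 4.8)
The plan is to prove the two implications separately, with all the content in the forward direction and resting on the Artin--Rees lemma.

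For the implication ``$\Leftarrow$'', assume $I^{n}M :_{M} x = I^{n-1}M$ for all $n \geq n_{0}$. I take $c = n_{0}$ in the definition of $M$-superficial element: for every $n \geq c$ one has $n+1 \geq n_{0}$, hence $(I^{n+1}M :_{M} x) \cap I^{c}M = I^{n}M \cap I^{c}M = I^{n}M$, the last equality because $n \geq c$ forces $I^{n}M \subseteq I^{c}M$. So $x$ is an $M$-superficial element of $I$.

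For ``$\Rightarrow$'', let $c \in \mathbb{N}$ be a constant witnessing that $x$ is $M$-superficial, that is, $(I^{n+1}M :_{M} x) \cap I^{c}M = I^{n}M$ for all $n \geq c$. Since $x \in I$, the inclusion $I^{n-1}M \subseteq I^{n}M :_{M} x$ holds for every $n$, so only the reverse inclusion must be checked for $n \gg 0$. Apply the Artin--Rees lemma (available since $R$ is Noetherian and $M$ is finite) to the submodule $xM \subseteq M$: there is an integer $k \geq 1$ such that $I^{n}M \cap xM = I^{n-k}(I^{k}M \cap xM) \subseteq xI^{n-k}M$ for all $n \geq k$. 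Now fix $n \geq k+c$ and take $m \in I^{n}M :_{M} x$. Then $xm \in I^{n}M \cap xM \subseteq xI^{n-k}M$, so $xm = xm'$ for some $m' \in I^{n-k}M$; since $x$ is $M$-regular, $m = m' \in I^{n-k}M \subseteq I^{c}M$, using $n-k \geq c$. Thus $m \in (I^{n}M :_{M} x) \cap I^{c}M = I^{n-1}M$ by the superficiality condition applied with $n-1 \geq c$ in place of $n$. Hence $I^{n}M :_{M} x = I^{n-1}M$ for all $n \geq k+c$, completing the proof.

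The only point demanding attention is the index bookkeeping in the last paragraph, where the Artin--Rees constant $k$, the superficiality constant $c$, and the cancellation of the $M$-regular element $x$ must all be brought to bear simultaneously; once the threshold $n \geq k+c$ is fixed this is routine, and no genuine obstacle is anticipated.
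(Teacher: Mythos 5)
Your proof is correct: the backward direction with $c=n_0$ is immediate, and the forward direction correctly combines Artin--Rees applied to $xM\subseteq M$, cancellation of the $M$-regular element $x$, and the superficiality equation at index $n-1$ (legitimate since $n\geq k+c$ with $k\geq 1$ gives $n-1\geq c$). The paper itself gives no proof, citing \cite[Lemma 8.5.3]{Swanson-Huneke}, and your argument is essentially the standard one from that reference.
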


\begin{Lemma}$($\cite[Lemma 8.5.4]{Swanson-Huneke}\label{lemmaswanhun}$)$
	Let $R$ be a ring, $I$ an ideal of $R$, and $M$ a finite $R$-module. Assume that $\bigcap_{n} I^{n}M = 0$ and that $I$ contains an $M$-regular element. Then every $M$-superficial element of $I$ is $M$-regular.
\end{Lemma}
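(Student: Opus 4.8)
The plan is to prove directly that the submodule $N := (0 :_{M} x) = \{m \in M \mid xm = 0\}$ is zero, where $x \in I$ denotes a fixed $M$-superficial element of $I$. By definition there is $c \in \mathbb{N}$ with $(I^{n+1}M :_{M} x) \cap I^{c}M = I^{n}M$ for all $n \geq c$, and by hypothesis we may also fix an $M$-regular element $y \in I$.

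The first step is to observe that $N \cap I^{c}M = 0$. Indeed, if $m \in N \cap I^{c}M$, then for every $n \geq c$ we have $xm = 0 \in I^{n+1}M$, so $m \in (I^{n+1}M :_{M} x) \cap I^{c}M = I^{n}M$; since the powers $I^{n}M$ form a descending chain, this forces $m \in \bigcap_{n} I^{n}M = 0$.

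The second step handles an arbitrary $m \in N$ by pushing it into $I^{c}M$ while keeping it inside $N$: from $x(y^{c}m) = y^{c}(xm) = 0$ we get $y^{c}m \in N$, and clearly $y^{c}m \in I^{c}M$ since $y^{c} \in I^{c}$; hence $y^{c}m \in N \cap I^{c}M = 0$ by the first step. As $y$ is $M$-regular, so is $y^{c}$, and therefore $m = 0$. Thus $N = 0$, that is, $x$ is $M$-regular.

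The whole argument is elementary; the only slightly non-obvious move is the multiplication by $y^{c}$ in the last step, which is precisely what reduces the vanishing of a general element of $N$ to the case already settled, namely $N \cap I^{c}M = 0$. (We note in passing that finiteness of $M$ is not actually used, since the Krull-type vanishing $\bigcap_{n} I^{n}M = 0$ is assumed outright.)
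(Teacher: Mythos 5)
Your proof is correct, and since the paper states this lemma only as a citation of \cite[Lemma 8.5.3--8.5.4]{Swanson-Huneke} without reproducing an argument, the relevant comparison is with the standard proof in that reference --- which is exactly what you give: multiply a torsion element $m$ by $y^{c}$ to land in $I^{c}M$, use superficiality to push $y^{c}m$ into $I^{n}M$ for every $n\geq c$, conclude $y^{c}m=0$ from $\bigcap_{n}I^{n}M=0$, and cancel the $M$-regular element $y^{c}$. Your parenthetical remark that finiteness of $M$ is not actually used is also accurate, since the vanishing of $\bigcap_{n}I^{n}M$ is taken as a hypothesis rather than derived from Krull's intersection theorem.
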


The condition $\bigcap_{n} I^{n}M = 0$ is satisfied whenever $R$ is a local ring, as guaranteed by the well-known Krull's intersection theorem, so in order to use Lemma \ref{lemmaswanhun} we just need to find an $M$-regular element in $I$.

Lemma \ref{lemmaswanhun1} and Lemma \ref{lemmaswanhun} put us in a position to prove Proposition \ref{regelem} below, which, according to \cite[Lemma 2.2]{Mafi}, is known when $M = R$. Moreover, in our general context, part (b) has been stated in \cite[Theorem 2.2(2)]{Puthenpurakal1}. However, the proofs are omitted in both situations, so for completeness we provide them here.

\begin{Proposition} \label{regelem}
	Let $R$ be a local ring, $M$ a finite $R$-module, and $I$ an ideal of $R$ containing an $M$-regular element. Then, for all $m \in \mathbb{N}$,
	
	\begin{itemize}
		\item[(a)] $\widetilde{I^{m + 1}_{M}} :_{M} I = \widetilde{I^{m}_{M}};$
		
		\item[(b)] $\widetilde{I^{m + 1}_{M}} :_{M} x = \widetilde{I^{m}_{M}}$, for every $M$-superficial element $x$ of $I$.
		\end{itemize}
\end{Proposition}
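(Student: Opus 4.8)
The plan is to prove both parts by reducing to the asymptotic behaviour of colon ideals, using the characterization of $\widetilde{I^m_M}$ from Lemma \ref{propnaghipour}(a) as an eventual stable value. Since $R$ is local, every ideal contains an $M$-regular element by hypothesis, $IM \neq M$ by Nakayama, and $\bigcap_n I^nM = 0$ by Krull's intersection theorem, so all the earlier lemmas apply freely.

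For part (a), the clean approach is to use Lemma \ref{propnaghipour}(a): for any $m$, $\widetilde{I^m_M}$ equals the stable value of the chain $(I^{m+k}M :_M I^k)_{k \geq 1}$, so for $k \gg 0$ we have $\widetilde{I^m_M} = I^{m+k}M :_M I^k$ and simultaneously $\widetilde{I^{m+1}_M} = I^{m+1+k}M :_M I^k$. Then
\[
\widetilde{I^{m+1}_M} :_M I \;=\; \bigl(I^{m+1+k}M :_M I^k\bigr) :_M I \;=\; I^{m+1+k}M :_M I^{k+1} \;=\; \widetilde{I^m_M},
\]
where the middle equality is the standard colon identity $(N :_M J) :_M J' = N :_M JJ'$ and the last equality again uses that $k+1$ is large enough for the chain defining $\widetilde{I^m_M}$ to have stabilized. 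The only thing to check carefully is that a single $k$ can be chosen large enough to work for both $\widetilde{I^m_M}$ and $\widetilde{I^{m+1}_M}$ at once — this is immediate since each stabilizes eventually, so take the max. One should also note the trivial inclusion $\widetilde{I^{m+1}_M} :_M I \supseteq \widetilde{I^m_M}$ holds directly from the definition of Ratliff-Rush closure (if $I^n m \subseteq I^{n+1}M$ then $I^{n+1}(Im) \subseteq I^{n+2}M$... actually the reverse containment also follows from the above chain of equalities, so no separate argument is needed).

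For part (b), let $x$ be an $M$-superficial element of $I$. By Lemma \ref{lemmaswanhun}, $x$ is $M$-regular, and by Lemma \ref{lemmaswanhun1}, $I^nM :_M x = I^{n-1}M$ for all $n \gg 0$. The inclusion $\widetilde{I^m_M} \subseteq \widetilde{I^{m+1}_M} :_M x$ follows from part (a) since $x \in I$. For the reverse, take $u \in \widetilde{I^{m+1}_M} :_M x$, so $xu \in \widetilde{I^{m+1}_M}$, i.e. $I^k(xu) \subseteq I^{m+1+k}M$ for some $k$; I want to conclude $u \in \widetilde{I^m_M}$. The cleanest route is via the alternative formula for $s^*(I,M)$ recorded after the definition of superficial element: reduce first to the case where the exponent is large. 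Concretely, using the colon identity and enlarging $k$, $xu \in I^{m+1+k}M :_M I^k$; applying $(\,\cdot\,) :_M x$ and commuting colons gives $u \in (I^{m+1+k}M :_M I^k) :_M x = (I^{m+1+k}M :_M x) :_M I^k$, and for $k$ large $I^{m+1+k}M :_M x = I^{m+k}M$ by Lemma \ref{lemmaswanhun1}, whence $u \in I^{m+k}M :_M I^k = \widetilde{I^m_M}$ by Lemma \ref{propnaghipour}(a). The main obstacle — really the only subtle point — is ensuring the exponent $m+1+k$ is simultaneously large enough both for the superficiality estimate $I^{m+1+k}M :_M x = I^{m+k}M$ (which needs $m+1+k \gg 0$, uniform in nothing since $m$ is fixed) and for the chain defining $\widetilde{I^m_M}$ to stabilize at index $k$; since finitely many "eventually" conditions are involved and $m$ is fixed, one simply takes $k$ past all the relevant thresholds.
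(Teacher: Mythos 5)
Your proof is correct and follows essentially the same route as the paper's: both parts rewrite $\widetilde{I^{m}_{M}}$ and $\widetilde{I^{m+1}_{M}}$ via Lemma \ref{propnaghipour}(a) as stabilized colon modules $I^{m+k}M :_M I^k$ for a common large $k$, then use the colon identity for (a) and commute the colon with $:_M x$ together with Lemma \ref{lemmaswanhun} and Lemma \ref{lemmaswanhun1} for (b). The extra care you take about choosing one $k$ past all thresholds, and the separate easy inclusion in (b) via part (a), are fine but not needed beyond what the chain of equalities already gives.
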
	
\begin{proof} Fix an arbitrary $m \in \mathbb{N}$. By Lemma \ref{lemmanaghipour}(a), we can write $\widetilde{I^{m }_{M}} = I^{m + j}M :_{M} I^{j} = I^{m + j + 1}M :_{M} I^{j + 1}$ and $\widetilde{I^{m + 1}_{M}} = I^{(m + 1) + (j - 1)}M :_{M} I^{j - 1} = I^{m + j + 1}M :_{M} I^{j},$ for all $j \gg 0$.  In particular, we get $$\widetilde{I^{m + 1}_{M}} :_{M} I \, = \, (I^{m + j + 1}M :_{M} I^{j}) :_{M} I \, = \, I^{m + j + 1}M :_{M} I^{j + 1} \, = \, \widetilde{I^{m }_{M}},$$ which proves (a). Now, let $x$ be an $M$-superficial element of $I$. By Lemma \ref{lemmaswanhun}, $x$ is $M$-regular, and hence, in view of Lemma \ref{lemmaswanhun1}, we can choose $j$ such that $I^{m + j + 1}M :_{M} x = I^{m + j}$. It follows that
$$\widetilde{I^{m + 1}_{M}} :_{M} x \, = \, (I^{m + j + 1}M :_{M} I^{j}) :_{M} x \, = \, (I^{m + j + 1}M :_{M} x) :_{M} I^{j} \, = \, I^{m + j}M :_{M} I^{j} \, = \, \widetilde{I^{m}_{M}},$$ thus showing (b). \qed
\end{proof}

\medskip

As a consequence of item (a) we derive the following result, which will be a key tool in Subsection
\ref{RTT} in the case $M=R$. Part (b) of Proposition \ref{regelem} will be used in other sections (e.g., in the proof of Theorem \ref{Mafigene}).

\begin{Proposition}\label{s*conjec}
	Let $R$ be a local ring and $M$ a non-zero finite $R$-module. Let $I$ be an ideal of $R$ containing an $M$-regular element. Then $$s^{*}(I, M) \, = \, \mathrm{min}\,\{m \geq 1 \, \mid \, I^{n + 1}M :_{M} I = I^{n}M \ \ \mathit{for \ all} \ \ n \geq m \}.$$
\end{Proposition}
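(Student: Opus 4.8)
The plan is to show the two sets whose minima appear on each side of the claimed identity coincide, or at least have the same minimum. Write $t := \min\{m \geq 1 \mid I^{n+1}M :_M I = I^n M \text{ for all } n \geq m\}$ for the right-hand quantity; I want to prove $s^*(I,M) = t$. The two halves are: (i) if $\widetilde{I^n_M} = I^n M$ for all $n \geq m$, then $I^{n+1}M :_M I = I^n M$ for all $n \geq m$; and (ii) conversely, if $I^{n+1}M :_M I = I^n M$ for all $n \geq m$, then $\widetilde{I^n_M} = I^n M$ for all $n \geq m$. Direction (i) is immediate from Proposition \ref{regelem}(a): for $n \geq m$ we have $I^{n+1}M :_M I \subseteq \widetilde{I^{n+1}_M} :_M I = \widetilde{I^n_M} = I^n M$ (the first containment because $I^{n+1}M \subseteq \widetilde{I^{n+1}_M}$, the middle equality is (a), the last is the hypothesis), and the reverse containment $I^n M \subseteq I^{n+1}M :_M I$ is trivial. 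Hence $t \leq s^*(I,M)$.

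For direction (ii), fix $m$ with $I^{n+1}M :_M I = I^n M$ for all $n \geq m$; I must deduce $\widetilde{I^n_M} = I^n M$ for all $n \geq m$. By Lemma \ref{lemmanaghipour}(a), $\widetilde{I^n_M} = I^{n+j}M :_M I^j$ for $j \gg 0$. The idea is to peel off one factor of $I$ at a time: I claim $I^{n+j}M :_M I^j = I^{n+j-1}M :_M I^{j-1}$ whenever $n + j - 1 \geq m$. Indeed, $x \in I^{n+j}M :_M I^j$ means $I^{j-1}x \subseteq (I^{n+j}M :_M I) = I^{n+j-1}M$ (using the hypothesis, valid since $n+j-1 \geq m$), which says exactly $x \in I^{n+j-1}M :_M I^{j-1}$; the reverse inclusion is clear. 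Iterating this descent from large $j$ down to $j = 0$ — legitimate as long as the exponent on $I$ in the colon numerator stays $\geq m$, which it does since $n \geq m$ — collapses $\widetilde{I^n_M} = I^{n+j}M :_M I^j$ all the way to $I^n M :_M I^0 = I^n M$. Therefore $s^*(I,M) \leq t$, and combined with the first half, $s^*(I,M) = t$.

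The only point needing a little care — and the step I'd flag as the main (minor) obstacle — is making the downward induction in (ii) rigorous at the boundary: one must check that each step of the descent only ever invokes the hypothesis ``$I^{k+1}M :_M I = I^k M$'' for an index $k \geq m$, so that the chain does not break before reaching $I^n M$. Since we start from $\widetilde{I^n_M} = I^{n+j}M :_M I^j$ with $n \geq m$ and reduce $j$ by $1$ each time while the relevant index $k$ runs through $n+j-1, n+j-2, \dots, n$, all of which are $\geq n \geq m$, the induction goes through cleanly. An alternative, slightly slicker packaging of the same argument: by Proposition \ref{regelem}(a), $\widetilde{I^n_M} :_M I = \widetilde{I^{n-1}_M}$ for all $n$, so an easy induction shows $\widetilde{I^n_M} :_M I^k = \widetilde{I^{n-k}_M}$; taking $k$ large and $n$ correspondingly large recovers the stabilization, but the direct colon-peeling above is the most transparent route and avoids circularity.
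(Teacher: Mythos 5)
Your proof is correct. The first half (showing $t \leq s^*(I,M)$ via Proposition \ref{regelem}(a)) is exactly the paper's argument. The second half differs in mechanism: the paper argues by contradiction, assuming $s^*(I,M) > t$ and using Proposition \ref{regelem}(a) just once to descend a single level, namely $\widetilde{I^{s-1}_M} = \widetilde{I^{s}_M} :_M I = I^{s}M :_M I = I^{s-1}M$, which violates the minimality of $s^*(I,M)$. You instead go back to Lemma \ref{lemmanaghipour}(a), write $\widetilde{I^{n}_M} = I^{n+j}M :_M I^{j}$ for $j \gg 0$, and peel off one copy of $I$ at a time using the colon hypothesis at each index $n+j-1, \ldots, n \geq m$; your bookkeeping at the boundary is right, so the descent terminates at $I^{n}M$. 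Your route is direct rather than by contradiction and actually proves the slightly stronger fact that the two defining sets coincide (not merely that their minima agree), at the cost of an iterated induction where the paper needs only one step; the underlying computation is the same in both cases, since Proposition \ref{regelem}(a) is itself established from Lemma \ref{lemmanaghipour}(a). Your parenthetical "slicker packaging" at the end is essentially the paper's argument.
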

\begin{proof} Write $s = s^{*}(I,M)$ and $t = \mathrm{min}\{m \geq 1 \mid I^{n + 1}M :_{M} I = I^{n}M \ \mathrm{for \ all} \ n \geq m \}$. Let $n \geq s$. From the definition of $s$ we get $ \widetilde{I^{n}_{M}}  =  I^{n}M$ and $\widetilde{I^{n + 1}_{M}}   =  I^{n + 1}M$. On the other hand, Proposition \ref{regelem}(a) gives $\widetilde{I^{n + 1}_{M}} :_{M} I   =  \widetilde{I^{n}_{M}}$. Hence,
$I^{n + 1}M :_{M} I  =  I^{n}M$ for all $n\geq s$, which forces $s \geq t$. Suppose by way of contradiction that $s > t$. Then $s-1\geq t\geq 1$ and, using Proposition \ref{regelem}(a) again, we obtain
$$\widetilde{I^{s - 1}_{M}} \, = \, \widetilde{I^{s}_{M}} :_{M} I \, = \, I^{s}M :_{M} I \, = \, I^{s - 1}M,$$ 
thus contradicting the minimality of $s$. We conclude that $s=t$, as needed.\qed
\end{proof}

\section{First results and a question of Rossi-Trung-Trung}\label{R-T-T}

Let $S = \bigoplus_{n \geq 0}S_{n}$ be a finitely generated standard graded algebra over a ring $S_{0}$. As usual, by {\it standard} we mean that $S$ is generated by $S_1$ as an $S_0$-algebra. We write $S_{+} = \bigoplus_{n \geq 1}S_{n}$ for the ideal generated by all elements of $S$ of positive degree. Fix a finite graded $S$-module $N\neq 0$. A sequence $y_{1}, \ldots, y_{s}$ of homogeneous elements of $S$ is said to be an \textit{$N$-filter regular sequence} if $$y_{i} \notin P, \ \ \ \ \ \forall \ P \in \textrm{Ass}_{S}\left(\frac{N}{(y_{1}, \ldots, y_{i - 1})N}\right)\setminus V(S_{+}), \ \ \ \ \ i = 1, \ldots, s.$$ 

Now, for a graded $S$-module $A=\bigoplus_{n \in {\mathbb Z}}A_{n}$ satisfying $A_n=0$ for all $n\gg 0$, we let $a(A) = \textrm{max}\{n\in {\mathbb Z} \ | \ A_{n} \neq 0\}$ if $A\neq 0$, and $a(A)=-\infty$ if $A=0$. For an integer $j\geq 0$, we use the notation $$a_{j}(N) \, := \, a(H_{S_{+}}^{j}(N)),$$
where $H_{S_{+}}^{j}(-)$ stands for the $j$-th local cohomology functor with respect to the ideal $S_{+}$. It is known that $H_{S_{+}}^{j}(N)$ is a graded module with $H_{S_{+}}^{j}(N)_n=0$ for all $n\gg 0$; see, e.g., \cite[Proposition 15.1.5(ii)]{B-S}. Thus, $a_{j}(N)<+\infty$. We can now invoke one of the main numerical invariants studied in this paper.

\begin{Definition}\rm Maintain the above setting and notations. The \textit{Castelnuovo-Mumford regularity} of the $S$-module $N$ is defined as
$$\mathrm{reg}\,N \, := \, \mathrm{max}\{a_j(N) + j \, \mid \, j \geq 0\}.$$
\end{Definition}

It is well-known that $\mathrm{reg}\,N$ governs the complexity of the graded structure of $N$ and is of great significance in commutative algebra and algebraic geometry, for instance in the study of degrees of syzygies over polynomial rings. The literature on the subject is extensive; see, e.g.,   \cite{Bay-S}, \cite[Chapter 15]{B-S},  \cite{Eis-Go}, and \cite{Trung2}.

\subsection{First results} Let $I$ be an ideal of a ring $R$. The {\it Rees ring of $I$} is the blowup algebra ${\mathcal R}(I)  =  \bigoplus_{n \geq 0}I^{n}t^n$ (as usual we put $I^0=R$), which can be expressed as the standard graded subalgebra $R[It]$ of $R[t]$, where $t$ is an indeterminate over $R$. Now, for a finite $R$-module $M$, the {\it Rees module of $I$ relative to $M$} is the blowup module $${\mathcal R}(I, M) \, = \,  \bigoplus_{n \geq 0}\,I^{n}M,$$ which in particular is a finite graded module over ${\mathcal R}(I, R)={\mathcal R}(I)$. We will be interested in the relation between ${\rm reg}\,{\mathcal R}(I, M)$ and another key invariant that will be defined shortly. First we recall the following fact.

\begin{Lemma}$($\cite[Lemma 4.5]{Giral-Planas-Vilanova}$)$\label{aistheleast} Let $R$ be a ring, $I$ an ideal of $R$ and $M$ a finite $R$-module. Let $x_{1}, \ldots, x_{s}$ be elements of $I$. Then, $x_{1}t, \ldots, x_{s}t$ is an ${\mathcal R}(I, M)$-filter regular sequence if and only if
	\begin{equation}\label{GP-V}
	[(x_{1}, \ldots, x_{i - 1})I^{n}M :_{M} x_{i}] \, \cap \, I^{n}M \, = \, (x_{1}, \ldots, x_{i - 1})I^{n - 1}M
	\end{equation}
	for $i = 1, \ldots, s$ and all $n\gg 0$. 
	\end{Lemma}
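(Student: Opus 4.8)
The plan is to reduce the statement, element by element along the sequence, to one standard fact about graded modules, and then to identify the relevant graded pieces by a direct computation. Write $S=\mathcal{R}(I)$ and $N=\mathcal{R}(I,M)$, a finite graded $S$-module, and for $1\le i\le s$ set $N_{i-1}=N/(x_1t,\ldots,x_{i-1}t)N$. By definition, $x_1t,\ldots,x_st$ is an $N$-filter regular sequence precisely when, for each $i$, the homogeneous element $x_it\in S_1$ avoids every $P\in\operatorname{Ass}_S(N_{i-1})\setminus V(S_+)$.

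The first step is the following elementary observation. For a finite graded module $L$ over the standard graded ring $S$ and a homogeneous element $y\in S_+$, one has $y\notin P$ for all $P\in\operatorname{Ass}_S(L)\setminus V(S_+)$ if and only if $(0:_Ly)_n=0$ for all $n\gg 0$. For the forward implication, note $\operatorname{Ass}_S(0:_Ly)\subseteq\operatorname{Ass}_S(L)$, and since $y$ annihilates $0:_Ly$ it lies in every prime of $\operatorname{Ass}_S(0:_Ly)$; by hypothesis each such prime then contains $S_+$, so $S_+\subseteq\sqrt{\operatorname{Ann}_S(0:_Ly)}$ and hence a power of $S_+$ kills the finite graded module $0:_Ly$; as $S$ is standard graded, $S/S_+^k$ is finite over $S_0$, so $0:_Ly$ is finite over $S_0$ and therefore concentrated in finitely many degrees. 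Conversely, if $(0:_Ly)_n=0$ for $n\gg 0$, then $0:_Ly$ lives in finitely many degrees, so any homogeneous element of $S_+$ annihilates it after raising to a high enough power; hence $\operatorname{Supp}_S(0:_Ly)\subseteq V(S_+)$, and if some $P\in\operatorname{Ass}_S(L)\setminus V(S_+)$ contained $y$, writing $P=\operatorname{Ann}_S(z)$ with $0\ne z$ homogeneous would give $0\ne z\in 0:_Ly$ and thus $P\in\operatorname{Supp}_S(0:_Ly)\subseteq V(S_+)$, a contradiction.

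The second step is to compute the grading on $N_{i-1}$ and on the kernel of multiplication by $x_it$. Since each $x_jt\in S_1$, the submodule $(x_1t,\ldots,x_{i-1}t)N$ has degree-$n$ component $(x_1,\ldots,x_{i-1})I^{n-1}M$ for $n\ge 1$ (and $0$ in degree $0$), so $(N_{i-1})_n=I^nM/(x_1,\ldots,x_{i-1})I^{n-1}M$ for $n\ge 1$. Multiplication by $x_it$ sends the class of $m\in I^nM$ in $(N_{i-1})_n$ to the class of $x_im\in I^{n+1}M$ in $(N_{i-1})_{n+1}$, which vanishes exactly when $x_im\in(x_1,\ldots,x_{i-1})I^nM$. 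Hence
$$(0:_{N_{i-1}}x_it)_n \;=\; \frac{[(x_1,\ldots,x_{i-1})I^nM:_Mx_i]\cap I^nM}{(x_1,\ldots,x_{i-1})I^{n-1}M},$$
and this module is zero if and only if \eqref{GP-V} holds at that value of $n$. Combining the two steps: $x_1t,\ldots,x_st$ is an $N$-filter regular sequence iff for each $i$ the element $x_it$ avoids the relevant associated primes of $N_{i-1}$, iff $(0:_{N_{i-1}}x_it)_n=0$ for all $n\gg 0$, iff \eqref{GP-V} holds for all $n\gg 0$; and requiring this for $i=1,\ldots,s$ is precisely the asserted equivalence.

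The only genuine point to be careful about is the module-theoretic lemma of the first step — in particular, verifying cleanly that a finite graded $S$-module supported in $V(S_+)$ (equivalently, annihilated by a power of $S_+$) is concentrated in finitely many degrees, which is where the standardness of $S$ enters. The remainder is routine bookkeeping with the grading on the Rees module and its successive quotients.
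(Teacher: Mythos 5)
Your argument is correct and complete. Note that the paper itself offers no proof of this statement: it is quoted verbatim as \cite[Lemma 4.5]{Giral-Planas-Vilanova}, so there is no in-paper argument to compare against. What you have written is a self-contained proof of the cited result, and it follows the standard route: first, the characterization that a homogeneous $y\in S_+$ avoids all of $\operatorname{Ass}_S(L)\setminus V(S_+)$ if and only if $(0:_L y)_n=0$ for $n\gg 0$ (your Step 1 handles both directions carefully, including the point where standard grading is needed to conclude that a finite graded module killed by a power of $S_+$ lives in finitely many degrees, and the fact that associated primes of a graded module are annihilators of homogeneous elements); second, the identification of the graded pieces of $0:_{N_{i-1}}x_it$ with the quotients $\bigl([(x_1,\ldots,x_{i-1})I^nM:_Mx_i]\cap I^nM\bigr)/(x_1,\ldots,x_{i-1})I^{n-1}M$, whose vanishing is exactly condition (\ref{GP-V}). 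The one containment you use implicitly in Step 2, namely $(x_1,\ldots,x_{i-1})I^{n-1}M\subseteq [(x_1,\ldots,x_{i-1})I^nM:_Mx_i]\cap I^nM$, does hold because all the $x_j$ lie in $I$, so the displayed quotient is well defined and the equivalence with (\ref{GP-V}) is legitimate. I see no gap.
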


\begin{Definition}\rm Let $I$ be a proper ideal of a ring $R$ and let $M$ be a non-zero finite $R$-module. An ideal $J \subseteq I$ is called a \textit{reduction of $I$ relative to $M$} if $JI^{n}M = I^{n + 1}M$ for some integer $n\geq 0$. Such a reduction $J$ is said to be \textit{minimal} if it is minimal with respect to inclusion. As usual, in the classical case where $M = R$  we say that $J$ is a \textit{reduction of $I$}. If $J$ is a reduction of $I$ relative to $M$, we define the \textit{reduction number of $I$ with respect to $J$ relative to $M$} as $${\rm r}_{J}(I, M) \, := \, \mathrm{min}\,\{m \in \mathbb{N} \, \mid \, JI^{m}M = I^{m + 1}M\},$$
and the \textit{reduction number of $I$ relative to $M$} as
$${\rm r}(I, M) \, := \, \mathrm{min}\,\{{\rm r}_{J}(I, M) \, \mid \, J \ \ \mathrm{is \ a \ minimal \ reduction \ of \ } I \mathrm{ \ relative \ to} \ M\}.$$ We say that
${\rm r}(I, M)$ is \textit{independent} if ${\rm r}_J(I,M) = {\rm r}(I, M)$ for every minimal reduction $J$ of $I$ relative to $M$. If $M = R$, we simply write ${\rm r}_{J}(I)$ (the \textit{reduction number of $I$ with respect to $J$}) and ${\rm r}(I)$ (the \textit{reduction number of $I$}).
\end{Definition}

\begin{Lemma} $($\cite[Proposition 4.6]{Giral-Planas-Vilanova}$)$\label{propleastnum}
	Let $R$ be a ring, $I$ an ideal of $R$ and $M$ a finite $R$-module. Let $J = (x_{1}, \ldots, x_{s})$ be a reduction of $I$ relative to $M$ such that $x_{1}t, \ldots, x_{s}t$ is an ${\mathcal R}(I, M)$-filter regular sequence. Then 
$$\mathrm{reg}\,{\mathcal R}(I, M) \, = \, \mathrm{min}\,\{\ell \geq {\rm r}_{J}(I,M) \, \mid \,  \mathit{equality \ in \ {\rm (\ref{GP-V})} \ holds \ for \ all} \ \ n \geq \ell + 1\}.$$
\end{Lemma}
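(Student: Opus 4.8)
The strategy is to pass to the Rees side and compute $\mathrm{reg}\,\mathcal{R}(I,M)$ by successively killing $x_1t,\dots,x_st$ while tracking local cohomology. Put $S=\mathcal{R}(I)$, $N=\mathcal{R}(I,M)$ (a finitely generated graded $S$-module), $y_i=x_it\in S_1$, and set $N_{(0)}=N$ and $N_{(i)}=N_{(i-1)}/y_iN_{(i-1)}$, so that $(N_{(i)})_n=I^nM/(x_1,\dots,x_i)I^{n-1}M$. The first point is the identification
$$\big(0:_{N_{(i-1)}}y_i\big)_n \;=\; \big[\big((x_1,\dots,x_{i-1})I^nM:_Mx_i\big)\cap I^nM\big]\big/(x_1,\dots,x_{i-1})I^{n-1}M,$$
obtained by describing multiplication by $y_i=x_it$ in degree $n$; in particular, equality holds in (\ref{GP-V}) in degree $n$ for the index $i$ if and only if $\big(0:_{N_{(i-1)}}y_i\big)_n=0$. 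Since $x_1t,\dots,x_st$ is an $\mathcal{R}(I,M)$-filter regular sequence, Lemma \ref{aistheleast} shows that (\ref{GP-V}) holds for all $n\gg 0$; hence each finitely generated graded module $\big(0:_{N_{(i-1)}}y_i\big)$ is bounded above in degrees, so it is $S_+$-torsion and $\mathrm{reg}\,\big(0:_{N_{(i-1)}}y_i\big)=a\big(0:_{N_{(i-1)}}y_i\big)=:\alpha_i$ (with $\alpha_i=-\infty$ when this module vanishes). Likewise, $J$ being a reduction of $I$ relative to $M$ makes $N_{(s)}$ $S_+$-torsion with $a(N_{(s)})={\rm r}_J(I,M)$, so $\mathrm{reg}\,N_{(s)}={\rm r}_J(I,M)$.

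The technical heart is the following claim, to be applied repeatedly: if $P$ is a finitely generated graded $S$-module and $y\in S_1$ is such that $(0:_Py)$ is $S_+$-torsion, then $\mathrm{reg}\,P=\max\{\mathrm{reg}\,(P/yP),\,\mathrm{reg}\,(0:_Py)\}$. To prove this I would split the exact sequence $0\to(0:_Py)(-1)\to P(-1)\xrightarrow{\,y\,}P\to P/yP\to 0$ into two short exact sequences; since $(0:_Py)$ is $S_+$-torsion, its higher local cohomology vanishes, giving $H^j_{S_+}(yP)\cong H^j_{S_+}(P)(-1)$ for all $j\geq 1$, with the natural map $H^j_{S_+}(yP)\to H^j_{S_+}(P)$ identified with multiplication by $y$. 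Plugging this into the long exact cohomology sequence of $0\to yP\to P\to P/yP\to 0$ and looking at the top nonzero degree $a=a_j(P)$ (so that $y$ sends $H^j_{S_+}(P)_{a-1}$, shifted, into the zero group $H^j_{S_+}(P)_{a+1}$), exactness forces a surjection $H^{j-1}_{S_+}(P/yP)_{a+1}\twoheadrightarrow H^j_{S_+}(P)_a\neq 0$, i.e.\ $a_j(P)\le a_{j-1}(P/yP)-1$ for $j\ge 1$, whence $a_j(P)+j\le\mathrm{reg}\,(P/yP)$. For $j=0$ one argues separately using $(0:_Py)\subseteq H^0_{S_+}(P)$: taking $a=a_0(P)$, either $H^0_{S_+}(P)_a\not\subseteq y\,H^0_{S_+}(P)_{a-1}$, in which case the cokernel of multiplication by $y$ embeds into $H^0_{S_+}(P/yP)$ and yields $a\le a_0(P/yP)$, or else the nonzero space $H^0_{S_+}(P)_a$ is annihilated by $y$ and hence lies inside $(0:_Py)$, yielding $a\le a_0(0:_Py)$. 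The reverse inequalities $\mathrm{reg}\,(P/yP)\le\mathrm{reg}\,P$ and $\mathrm{reg}\,(0:_Py)\le\mathrm{reg}\,P$ come out of the same long exact sequence together with the inclusion $(0:_Py)\subseteq H^0_{S_+}(P)$.

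Applying the claim to $P=N_{(i-1)}$, $y=y_i$, for $i=1,\dots,s$ and chaining yields $\mathrm{reg}\,N=\max\{\mathrm{reg}\,N_{(s)},\alpha_1,\dots,\alpha_s\}=\max\{{\rm r}_J(I,M),\alpha_1,\dots,\alpha_s\}$. It then remains to rewrite the right-hand side: an integer $\ell\ge{\rm r}_J(I,M)$ realizes equality in (\ref{GP-V}) for all $n\ge\ell+1$ and all $i$ exactly when $\ell+1>\alpha_i$ for every $i$, that is, exactly when $\ell\ge\max\{{\rm r}_J(I,M),\alpha_1,\dots,\alpha_s\}$; hence the least such $\ell$ equals $\max\{{\rm r}_J(I,M),\alpha_1,\dots,\alpha_s\}=\mathrm{reg}\,N$, which is the desired identity. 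The main obstacle is the claim, and inside it the degree-zero bookkeeping: passing only through $P/yP$ would fail when multiplication by $y$ happens to be surjective onto the top graded piece of $H^0_{S_+}(P)$, and it is precisely this case that the $(0:_Py)$ term absorbs, so making that dichotomy airtight is the delicate step. (One should also note that all the $N_{(i)}$ are finitely generated over $S$, and briefly dispatch the degenerate possibilities ${\rm r}_J(I,M)=0$ or $IM=M$ in the non-local setting.)
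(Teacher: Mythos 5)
Your argument is correct. Note that the paper does not prove this lemma at all --- it is imported verbatim as \cite[Proposition 4.6]{Giral-Planas-Vilanova} --- so there is no internal proof to compare against; what you have written is a sound self-contained reconstruction of the standard argument behind that cited result (going back to Trung's treatment of regularity via filter-regular sequences). All the key steps check out: the identification of $\bigl(0:_{N_{(i-1)}}y_i\bigr)_n$ with the quotient measuring failure of equality in (\ref{GP-V}), the computation $a(N_{(s)})={\rm r}_J(I,M)$, and the regularity identity $\mathrm{reg}\,P=\max\{\mathrm{reg}(P/yP),\,\mathrm{reg}(0:_Py)\}$ for a degree-one element with $S_+$-torsion annihilator. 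One small simplification: your degree-zero dichotomy is not actually needed, since with $a=a_0(P)$ the whole top piece $H^0_{S_+}(P)_a$ is killed by $y$ (it maps into $H^0_{S_+}(P)_{a+1}=0$), so $H^0_{S_+}(P)_a\subseteq(0:_Py)_a$ and $a_0(P)\le a(0:_Py)$ unconditionally (indeed $a_0(P)=a(0:_Py)$ whenever $H^0_{S_+}(P)\neq 0$). The only genuine degenerate case is $M=0$, where both sides of the asserted formula must be read with the usual conventions; this is harmless and implicit in the source.
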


The usefulness of the proposition below will be made clear in Remark \ref{reg<r} and Remark \ref{cotareg}. Notice that the case $M=R$ recovers \cite[Proposition 4.7(i)]{Trung}; in fact, the proof is essentially the same and we give it in a little more detail.

\begin{Proposition}\label{proptrung}
	Let $R$ be a  ring, $I$ an ideal of $R$ and $M$ a finite $R$-module. Let $J = (x_{1}, \ldots, x_{s})$ be a reduction of $I$ relative to $M$ such that
	$$[(x_{1}, \ldots, x_{i - 1})M :_{M} x_{i}] \, \cap \, I^{\ell + 1}M \, = \, (x_{1}, \ldots, x_{i - 1})I^{\ell}M,  \ \ \ \ \ i = 1, \ldots, s.$$
	for a fixed $\ell \geq {\rm r}_{J}(I,M)$. Then, for all $n \geq \ell + 1$,
	$$[(x_{1}, \ldots, x_{i - 1})M :_{M} x_{i}] \, \cap \, I^{n}M \, = \, (x_{1}, \ldots, x_{i - 1})I^{n - 1}M,  \ \ \ \ \ i = 1, \ldots, s.$$
\end{Proposition}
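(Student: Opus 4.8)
The plan is to argue by induction on $n \geq \ell + 1$, establishing the displayed equalities for all $i = 1, \ldots, s$ simultaneously at each stage. The base case $n = \ell + 1$ is precisely the hypothesis. So suppose $n \geq \ell + 2$ and that the equalities hold with $n$ replaced by $n - 1$; I want to deduce them for $n$. Fix $i \in \{1, \ldots, s\}$ and take $m \in [(x_1, \ldots, x_{i-1})M :_M x_i] \cap I^n M$; the goal is to show $m \in (x_1, \ldots, x_{i-1})I^{n-1}M$. Since $n \geq \ell + 1 \geq {\rm r}_J(I,M) + 1$, we have $I^n M = J I^{n-1} M = (x_1, \ldots, x_s) I^{n-1} M$, so we may write $m = \sum_{j=1}^{s} x_j a_j$ with each $a_j \in I^{n-1} M$.

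The key manipulation is to multiply the relation $x_i m \in (x_1, \ldots, x_{i-1})M$ out: from $m = \sum_j x_j a_j$ we get $x_i \bigl(\sum_{j \geq i} x_j a_j\bigr) \in (x_1, \ldots, x_{i-1})M$. Following Trung's argument, one peels off the top index: consider $x_i a_s$ — actually it is cleaner to reorganize and treat the terms $x_i, \ldots, x_s$ one at a time, using at each point that $a_j \in I^{n-1} M \subseteq I^{\ell} M$ (since $n - 1 \geq \ell + 1 > \ell$, or just $n-1 \geq \ell$) together with the hypothesis applied in the form with exponent $\ell$, or better, the inductive hypothesis at level $n - 1$. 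The precise bookkeeping: rewrite $x_i m = -\sum_{j<i} x_j a_j + \bigl(\text{terms not yet absorbed}\bigr)$ and show inductively (a secondary induction, downward on the largest surviving index among $\{i+1, \ldots, s\}$, or upward) that each $a_j$ for $j > i$ can be replaced by an element of $I^{n-1}M$ lying in $(x_1, \ldots, x_{i-1})I^{n-2}M$ modulo the lower $x_j$'s, after which the coefficient of $x_i$ itself sits in $[(x_1,\ldots,x_{i-1})M :_M x_i] \cap I^{n-1}M$, which by the inductive hypothesis equals $(x_1,\ldots,x_{i-1})I^{n-2}M$; multiplying back by $x_i \in I$ lands in $(x_1,\ldots,x_{i-1})I^{n-1}M$ as desired. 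The reverse inclusion $(x_1, \ldots, x_{i-1})I^{n-1}M \subseteq [(x_1, \ldots, x_{i-1})M :_M x_i] \cap I^n M$ is immediate since $x_i I^{n-1} M \subseteq I^n M$ and $x_i \cdot (x_1, \ldots, x_{i-1})I^{n-1}M \subseteq (x_1, \ldots, x_{i-1})M$.

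The step I expect to be the main obstacle is the inner absorption argument: cleanly reducing the general combination $m = \sum_{j} x_j a_j$ to one where only $x_1, \ldots, x_i$ appear, while keeping all coefficients in the correct power $I^{n-1}M$, and making sure the colon hypotheses are invoked at the right exponent. The subtlety is that the hypothesis is stated at the single level $\ell$, so to push it up to level $n$ one must genuinely use the inductive hypothesis (the level $n-1$ statement) rather than the raw hypothesis, and one must be careful that the elements being fed into the colon ideals actually lie in $I^{n-1}M$, not merely in $M$. I would organize this so that, at each pass, a term $x_j a_j$ with $j > i$ and $a_j \in I^{n-1}M$ gets rewritten: since $x_i(x_j a_j + \cdots) \in (x_1, \ldots, x_{j-1})M$ after clearing the genuinely lower terms, one applies the $j$-th hypothesis/inductive equality to conclude $x_i a_j' \in (x_1,\ldots,x_{j-1})I^{n-1}M$ type containments — here I'd mirror exactly the chain of colon computations in Trung's proof of \cite[Proposition 4.7(i)]{Trung}, just carrying the module $M$ along, since as the paper notes the argument is essentially the same. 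Once the reduction to index $i$ is done, the conclusion follows as above, completing the induction.
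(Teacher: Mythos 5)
Your overall strategy coincides with the paper's: induct on $n$ with the hypothesis as the base case, use $I^{n}M=JI^{n-1}M$, observe that the reverse inclusion is trivial, and note that the level-$(n-1)$ statement (not the raw hypothesis at level $\ell$) is what must be fed back in. But the proposal stops short of the actual content of the argument, and the two places where you do get concrete are precisely where the difficulty sits. First, the element-wise ``absorption'' of the terms $x_{j}a_{j}$ with $j>i$ does not work as described: from $x_{i}m\in(x_{1},\ldots,x_{i-1})M$ and $m=\sum_{j\ge i}x_{j}a_{j}$ you only extract containments for the products $x_{i}a_{j}$ (e.g.\ $x_{i}a_{s}\in[(x_{1},\ldots,x_{s-1})M:_{M}x_{s}]\cap I^{n}M$), which says nothing directly about $x_{j}a_{j}$ itself, so you cannot ``replace $a_{j}$ by an element of $(x_{1},\ldots,x_{i-1})I^{n-2}M$ modulo the lower $x_{j}$'s.'' The paper avoids elements altogether: it runs a descending induction on $i$ starting at $i=s$, and for $i<s$ it intersects $[(x_{1},\ldots,x_{i-1})M:_{M}x_{i}]\cap I^{n}M$ with $I^{n-1}M$, applies the level-$(n-1)$ statement, and then passes through $[(x_{1},\ldots,x_{i})M:_{M}x_{i+1}]\cap I^{n}M=(x_{1},\ldots,x_{i})I^{n-1}M$ (the case $i+1$ at level $n$), after which the modular law gives $[(x_{1},\ldots,x_{i-1})M:_{M}x_{i}]\cap I^{n}M=(x_{1},\ldots,x_{i-1})I^{n-1}M+x_{i}\{[(x_{1},\ldots,x_{i-1})M:_{M}x_{i}^{2}]\cap I^{n-1}M\}$.

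Second, and more seriously, your claim that ``the coefficient of $x_{i}$ itself sits in $[(x_{1},\ldots,x_{i-1})M:_{M}x_{i}]\cap I^{n-1}M$'' is exactly the statement that needs proof. What one gets for free is membership in the colon with $x_{i}^{2}$: if $m\equiv x_{i}u$ modulo $(x_{1},\ldots,x_{i-1})I^{n-1}M$ with $u\in I^{n-1}M$, then $x_{i}^{2}u\in(x_{1},\ldots,x_{i-1})M$, i.e.\ $u\in[(x_{1},\ldots,x_{i-1})M:_{M}x_{i}^{2}]\cap I^{n-1}M$. The crux of the proof is closing the gap between $:_{M}x_{i}^{2}$ and $:_{M}x_{i}$, which the paper does via the identity $[(x_{1},\ldots,x_{i-1})M:_{M}x_{i}^{2}]\cap[I^{n-1}M:_{M}x_{i}]=\{[(x_{1},\ldots,x_{i-1})M:_{M}x_{i}]\cap I^{n-1}M\}:_{M}x_{i}=(x_{1},\ldots,x_{i-1})I^{n-2}M:_{M}x_{i}\subseteq(x_{1},\ldots,x_{i-1})M:_{M}x_{i}$, using the induction at level $n-1$; intersecting with $I^{n-1}M$ then forces $u\in(x_{1},\ldots,x_{i-1})I^{n-2}M$ and hence $x_{i}u\in(x_{1},\ldots,x_{i-1})I^{n-1}M$. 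This chain of colon manipulations is entirely absent from your write-up; deferring to ``the chain of colon computations in Trung's proof'' names the source but does not supply the argument, so as it stands the proposal is a plan rather than a proof.
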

\begin{proof} We may take $n > \ell + 1$ as the case $n=\ell + 1$ holds by hypothesis. Since $I^{n}M = JI^{n - 1}M= (x_1, \ldots, x_{s-1})I^{n - 1}M + x_sI^{n - 1}M$, and $(x_1, \ldots, x_{s-1})I^{n - 1}M\subseteq (x_1, \ldots, x_{s-1})M\subseteq (x_{1}, \ldots, x_{s - 1})M :_{M} x_{s}$, we can write 
		$$ [(x_{1}, \ldots, x_{s - 1})M :_{M} x_{s}] \cap I^{n}M = [(x_{1}, \ldots, x_{s - 1})M :_{M} x_{s}] \cap [ (x_1, \ldots, x_{s-1})I^{n - 1}M + x_sI^{n - 1}M]$$ 
		$$=(x_{1}, \ldots, x_{s - 1})I^{n - 1}M + \{[(x_{1}, \ldots, x_{s - 1})M :_{M} x_{s}] \cap x_sI^{n - 1}M\}$$
		$$=(x_{1}, \ldots, x_{s - 1})I^{n - 1}M  + x_{s}\{ [(x_{1}, \ldots, x_{s - 1})M :_{M} x_{s}^{2}] \cap I^{n - 1}M\}.$$
		Induction on $n$ yields $$[(x_{1}, \ldots, x_{s - 1})M :_{M} x_{s}] \cap I^{n - 1}M = (x_{1}, \ldots, x_{s - 1})I^{n - 2}M.$$ Thus,
		$$[(x_{1}, \ldots, x_{s - 1})M :_{M} x_{s}^{2}] \cap [I^{n - 1}M :_{M} x_{s}] =  \{[(x_{1}, \ldots, x_{s - 1})M :_{M} x_{s}] \cap I^{n - 1}M\} :_{M} x_{s}$$  $$=(x_{1}, \ldots, x_{s - 1})I^{n - 2}M :_{M} x_{s}
		\subseteq (x_{1}, \ldots, x_{s - 1})M :_{M} x_{s}.$$ Intersecting with $I^{n - 1}M$, we get $$[(x_{1}, \ldots, x_{s - 1})M :_{M} x_{s}^{2}] \cap I^{n - 1}M  \subseteq  [(x_{1}, \ldots, x_{s - 1})M :_{M} x_{s}] \cap I^{n - 1}M   = (x_{1}, \ldots, x_{s - 1})I^{n - 2}M.$$ Therefore, $$x_{s}\{[(x_{1}, \ldots, x_{s - 1})M :_{M} x_{s}^{2}] \cap I^{n - 1}M\}  \subseteq  x_{s}[(x_{1}, \ldots, x_{s - 1})I^{n - 2}M]  \subseteq  (x_{1}, \ldots, x_{s - 1})I^{n - 1}M.$$ 
		
		Putting these facts together, we obtain 
		$$[(x_{1}, \ldots, x_{s - 1})M :_{M} x_{s}] \cap I^{n}M = (x_{1}, \ldots, x_{s - 1})I^{n - 1}M.$$
		Now, for $i < s$, we can use induction on $n$ and on $i$ to obtain
		$$[(x_{1}, \ldots, x_{i - 1})M :_{M} x_{i}] \cap I^{n}M  =  \{[(x_{1}, \ldots, x_{i - 1})M :_{M} x_{i}] \cap I^{n - 1}M\} \cap I^{n}M$$  $$=  [(x_{1}, \ldots, x_{i - 1})I^{n - 2}M] \cap I^{n}M   \subseteq  [(x_{1}, \ldots, x_{i})M] \cap I^{n}M$$  $$\subseteq  [(x_{1}, \ldots, x_{i})M :_{M} x_{i + 1}] \cap I^{n}M=(x_{1}, \ldots, x_{i})I^{n - 1}M=(x_{1}, \ldots, x_{i-1})I^{n - 1}M+x_iI^{n-1}M.$$ It follows that 
		$$ [(x_{1}, \ldots, x_{i - 1})M :_{M} x_{i}] \cap I^{n}M = (x_{1}, \ldots, x_{i - 1})I^{n - 1}M + x_{i}\{ [(x_{1}, \ldots, x_{i - 1})M :_{M} x_{i}^{2}] \cap I^{n - 1}M\}.$$ Now the result follows similarly as in the case $i = s$. \qed
\end{proof}

\medskip 

Let us now introduce another fundamental blowup module. For an ideal $I$ in a ring $R$ and a finite $R$-module $M$, the {\it associated graded module of $I$ relative to $M$} is the blowup module $${\mathcal G}(I, M) \, = \, \bigoplus_{n \geq 0}\, \frac{I^{n}M}{I^{n + 1}M} \, = \, {\mathcal R}(I, M)\otimes_RR/I,$$ which is a finite graded module over the {\it associated graded ring} ${\mathcal G}(I) =  {\mathcal G}(I, R)$ of $I$, i.e.,  ${\mathcal G}(I)  = {\mathcal R}(I)\otimes_RR/I$. Notice that this algebra is standard graded over ${\mathcal G}(I)_0=R/I$. As usual, we set ${\mathcal G}(I)_+= \bigoplus_{n \geq 1}I^{n}/I^{n + 1}$. 

Here it is worth mentioning for completeness that, under a suitable set of hypotheses, there is a close connection between the Cohen-Macaulayness of ${\mathcal G}(I, M)$ and the property ${\rm r}(I, M)\leq 1$. We refer to \cite[Theorem 3.8]{Pol-X}.

The following fact generalizes \cite[Lemma 4.8]{Oo} (also \cite[Proposition 4.1]{J-U}).

\begin{Lemma}$($\cite[Corollary 3]{Zamani}$)$\label{reg=reg}
Let $R$ be a  ring, $I$ an ideal of $R$ and $M$ a
finite $R$-module. Then $\mathrm{reg}\,{\mathcal R}(I, M) = \mathrm{reg}\,{\mathcal G}(I, M) $.
\end{Lemma}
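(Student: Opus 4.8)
The plan is to run the classical comparison used in the absolute case $M=R$: compare the $S_+$-local cohomology of $\mathcal{R}(I,M)$ and of $\mathcal{G}(I,M)$, where $S_+=\mathcal{R}(I)_+$, by means of the two canonical short exact sequences of graded $\mathcal{R}(I)$-modules
$$0 \lar \mathcal{R}(I,M)_+ \lar \mathcal{R}(I,M) \lar M \lar 0 \quad\text{and}\quad 0 \lar \mathcal{R}(I,M)_+(1) \lar \mathcal{R}(I,M) \lar \mathcal{G}(I,M) \lar 0,$$
where $\mathcal{R}(I,M)_+=\bigoplus_{n\geq 1}I^nM$, the left map of the second sequence identifies $\mathcal{R}(I,M)_+(1)$ with the submodule $I\mathcal{R}(I,M)$, and $M$ is regarded as concentrated in degree $0$. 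I may assume $IM\neq M$ (automatic when $R$ is local, the setting of all the applications), since otherwise $\mathcal{G}(I,M)=0$ and the statement degenerates. The elementary inputs are that $M$, sitting in degree $0$, is $S_+$-torsion, so $H^0_{S_+}(M)=M$ and $H^j_{S_+}(M)=0$ for $j\geq 1$, and that for every finite graded $\mathcal{R}(I)$-module the modules $H^j_{S_+}(-)$ vanish in large degrees, so the invariants $a_j(-)$ are finite or $-\infty$.

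First I would read off from the long exact sequence of the first short exact sequence that $H^j_{S_+}(\mathcal{R}(I,M)_+)\cong H^j_{S_+}(\mathcal{R}(I,M))$ for $j\geq 2$, hence $a_j(\mathcal{R}(I,M)_+)=a_j(\mathcal{R}(I,M))$ there, while for $j=0,1$ one gets agreement in every nonzero degree together with an exact strand through $M$ in degree $0$; in particular $H^0_{S_+}(\mathcal{R}(I,M)_+)\subseteq H^0_{S_+}(\mathcal{R}(I,M))$ and $a_1(\mathcal{R}(I,M)_+)\leq\max\{a_1(\mathcal{R}(I,M)),0\}$. Then I would substitute this into the long exact sequence of the second short exact sequence (where the shift $(1)$ lowers top degrees by $1$), obtaining for every $j\geq 1$ the bound $a_j(\mathcal{G}(I,M))\leq\max\{a_j(\mathcal{R}(I,M)),a_{j+1}(\mathcal{R}(I,M))-1\}$ and, for $j\geq 2$, the reverse bound $a_j(\mathcal{R}(I,M))\leq a_j(\mathcal{G}(I,M))$, together with the analogous inequalities in degrees $j=0,1$ up to the degree-$0$ correction carried by $M$. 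Adding $j$ and maximizing, each corrected term has the form $a_{j+1}(\mathcal{R}(I,M))-1+j=(a_{j+1}(\mathcal{R}(I,M))+(j+1))-2$, hence is at most $\mathrm{reg}\,\mathcal{R}(I,M)$; this yields $\mathrm{reg}\,\mathcal{G}(I,M)\leq\mathrm{reg}\,\mathcal{R}(I,M)$, and symmetrically $\mathrm{reg}\,\mathcal{R}(I,M)\leq\mathrm{reg}\,\mathcal{G}(I,M)$.

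The step I expect to require the most care is the bookkeeping at cohomological degrees $0$ and $1$: there the copy of $M$ in degree $0$ can contribute a stray constant $0$ to the estimates for $a_1(\mathcal{R}(I,M)_+)$ and $a_0(\mathcal{R}(I,M))$ that is not a priori bounded by the regularities in play. I would dispose of this using the standard fact that a nonzero finite graded module generated in non-negative degrees has regularity $\geq 0$ (indeed $\geq t_0$, the largest degree of a minimal generator), applied to both $\mathcal{R}(I,M)$ and $\mathcal{G}(I,M)$, each of which is generated in degree $0$ over $\mathcal{R}(I)$ resp.\ $\mathcal{G}(I)$: then $\mathrm{reg}\,\mathcal{G}(I,M)\geq 0$ absorbs the stray $0$ in one direction and $\mathrm{reg}\,\mathcal{R}(I,M)\geq 0$ in the other. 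Alternatively, one may reduce at the start to $R$ local, where $H^0_{S_+}(\mathcal{R}(I,M))$ and the degree-$0$ strand are directly controllable.
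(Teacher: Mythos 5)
The paper offers no proof of this lemma---it is quoted verbatim from Zamani's Corollary~3---so there is nothing internal to compare against; your argument is the standard two-short-exact-sequence comparison (the same mechanism used by Ooishi for $M=R$ and by Zamani in general), and it is correct, including the careful bookkeeping of the degree-zero contribution of $M$ at cohomological levels $0$ and $1$, which you rightly absorb using $\mathrm{reg}\,\mathcal{R}(I,M)\geq 0$ and $\mathrm{reg}\,\mathcal{G}(I,M)\geq 0$ for nonzero modules generated in degree $0$. Your restriction to $IM\neq M$ is in fact necessary rather than merely convenient: if $IM=M$ and $M\neq 0$ (e.g.\ $R=\mathbb{Z}$, $I=(2)$, $M=\mathbb{Z}/3$), then $\mathcal{G}(I,M)=0$ has regularity $-\infty$ while $\mathcal{R}(I,M)\cong M[t]$ has regularity $0$, so the statement as printed silently needs that hypothesis---which is automatic in every application in this paper, where $R$ is local and $I$ is proper.
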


\begin{Remark}\label{reg<r}\rm
     In the context of Proposition \ref{proptrung}, a consequence is the following inclusion of submodules of $I^nM$, $$[(x_{1}, \ldots, x_{i - 1})I^{n}M :_{M} x_{i}] \, \cap \, I^{n}M \, \subseteq  \, (x_{1}, \ldots, x_{i - 1})I^{n - 1}M$$ for all $n \geq \ell + 1$ and $i = 1, \ldots, s$. But this is then an equality, as
     $x_i[(x_{1}, \ldots, x_{i - 1})I^{n - 1}M]\subseteq (x_{1}, \ldots, x_{i - 1})I^{n}M$. It follows that (1) holds for all $n \geq \ell + 1$ and therefore, by Lemma \ref{aistheleast} and Lemma \ref{propleastnum}, we have $\mathrm{reg}\,{\mathcal R}(I, M) \leq \ell$. On the other hand, by Lemma \ref{reg=reg}, there is a general equality $\mathrm{reg}\,{\mathcal R}(I, M) = \mathrm{reg}\,{\mathcal G}(I, M)$. We conclude that $$\mathrm{reg}\,{\mathcal R}(I, M) \, = \, \mathrm{reg}\,{\mathcal G}(I, M) \, \leq \, \ell,$$ which thus generalizes \cite[Proposition 4.7(iv)]{Trung}.
\end{Remark}

\begin{Remark}\rm \label{cotareg}
     Let $k \geq 1$ be a positive integer. Making $\ell = {\rm r}_{J}(I,M) + k - 1$ in Proposition \ref{proptrung}, we retrieve the Noetherian part of \cite[Proposition 5.2]{Giral-Planas-Vilanova}. The inequality ${\rm r}_{J}(I,M) \leq \mathrm{reg}\,{\mathcal R}(I, M)$ follows easily from Lemma \ref{propleastnum}, whereas the bound $$\mathrm{reg}\,{\mathcal R}(I, M) \, \leq \, {\rm r}_{J}(I, M) + k - 1$$ is a consequence of Remark \ref{reg<r}. In particular, the case $k=1$ yields $\mathrm{reg}\,{\mathcal R}(I, M) =  {\rm r}_{J}(I,M)$.
\end{Remark}

Next, we record a result which provides another set of conditions under which the equality $\mathrm{reg}\,{\mathcal R}(I, M) =  {\rm r}_{J}(I,M)$ holds. It will be a key ingredient in the proof of Theorem \ref{Mafigene}. For completeness, we recall that the condition on $x_1, \ldots, x_s$ present in part (a) below means that no element of the sequence lies in the ideal generated by the others and in addition there are equalities $0 :_Mx_{1}x_j = 0 :_Mx_j$ for $j=1, \ldots, s$ and $(x_1, \ldots, x_i)M :_Mx_{i+1}x_j = (x_1, \ldots, x_i)M :_Mx_j$ for $i=1, \ldots, s-1$ and $j=i+1, \ldots, s$. We observe that parts (a) and (b) are satisfied if the elements $x_1, \ldots, x_s$ form an $M$-sequence.

\begin{Lemma}$($\cite[Theorem 5.3]{Giral-Planas-Vilanova}$)$\label{GPV}
Let $R$ be a  ring, $I$ an ideal of $R$ and $M$ a
finite $R$-module. Let $J = (x_{1}, \ldots, x_{s})$ be a reduction of $I$ relative to $M$, and ${\rm r}_{J}(I, M) = {\rm r}$. Suppose the following conditions hold:
\begin{itemize}
    \item[(a)] $x_{1}, \ldots, x_{s}$ is a $d$-sequence relative to $M$;
    \item[(b)] $x_{1}, \ldots, x_{s - 1}$ is an $M$-sequence;
    \item[(c)] $(x_{1}, \ldots, x_{i})M \cap I^{{\rm r} + 1}M = (x_{1}, \ldots, x_{i})I^{{\rm r}}M$, \, $i = 1, \ldots, s - 1.$
\end{itemize}
Then, $\mathrm{reg}\,{\mathcal R}(I, M) =  {\rm r}_{J}(I,M)$.
\end{Lemma}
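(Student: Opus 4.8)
The plan is to isolate a single family of equalities at the reduction-number degree and then feed it into the results already established above. Concretely, I would first aim to prove
$$(\star_i)\colon\qquad [(x_{1}, \ldots, x_{i - 1})M :_{M} x_{i}]\,\cap\, I^{{\rm r} + 1}M \;=\; (x_{1}, \ldots, x_{i - 1})I^{{\rm r}}M,\qquad i = 1, \ldots, s.$$
Granting $(\star_1),\dots,(\star_s)$, Proposition \ref{proptrung} applied with $\ell = {\rm r}$ propagates these equalities to every $n \geq {\rm r}+1$, and then, exactly as in Remark \ref{reg<r}, equality holds in (\ref{GP-V}) for all $n \geq {\rm r}+1$ (intersect with $I^{n}M$ on the left; the reverse containment is clear since $x_{i}(x_{1},\dots,x_{i-1})I^{n-1}M\subseteq(x_{1},\dots,x_{i-1})I^{n}M$). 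In particular (\ref{GP-V}) holds for $n\gg 0$, so Lemma \ref{aistheleast} shows that $x_{1}t, \ldots, x_{s}t$ is an ${\mathcal R}(I,M)$-filter regular sequence; Lemma \ref{propleastnum} then gives $\mathrm{reg}\,{\mathcal R}(I, M) = \min\{\ell \geq {\rm r}_{J}(I,M)\mid (\ref{GP-V})\text{ holds for all }n\geq \ell+1\} = {\rm r}_{J}(I,M) = {\rm r}$, which is the claim. So the whole proof reduces to verifying $(\star_1),\dots,(\star_s)$.

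For $i \leq s-1$ this is immediate from (b) and (c): since $x_{1}, \ldots, x_{i}$ is then an $M$-sequence, $x_{i}$ is a non-zero-divisor on $M/(x_{1}, \ldots, x_{i-1})M$, i.e. $(x_{1}, \ldots, x_{i-1})M :_{M} x_{i} = (x_{1}, \ldots, x_{i-1})M$; hence the left side of $(\star_i)$ equals $(x_{1}, \ldots, x_{i-1})M \cap I^{{\rm r}+1}M$, which is $(x_{1}, \ldots, x_{i-1})I^{{\rm r}}M$ by hypothesis (c) applied with index $i-1$ (the case $i=1$ reading $0\cap I^{{\rm r}+1}M=0$).

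The case $i = s$ is the real content, and the one where hypothesis (a) is indispensable, since $x_{s}$ need not be $M$-regular modulo $L:=(x_{1}, \ldots, x_{s-1})$ and the previous argument breaks down. The inclusion $\supseteq$ in $(\star_s)$ is clear. For $\subseteq$, take $m \in [LM :_{M} x_{s}]\cap I^{{\rm r}+1}M$ and, using that ${\rm r}_{J}(I,M) = {\rm r}$ gives $I^{{\rm r}+1}M = JI^{{\rm r}}M = LI^{{\rm r}}M + x_{s}I^{{\rm r}}M$, write $m = \ell + x_{s}b$ with $\ell \in LI^{{\rm r}}M$ and $b \in I^{{\rm r}}M$. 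Since $x_{s}m\in LM$ and $x_{s}\ell\in LM$, we get $x_{s}^{2}b\in LM$; then the $d$-sequence relation $(x_{1},\dots,x_{s-1})M:_{M}x_{s}^{2}=(x_{1},\dots,x_{s-1})M:_{M}x_{s}$ coming from (a) forces $x_{s}b\in LM$, whence $m=\ell+x_{s}b\in LM$. Therefore $m\in LM\cap I^{{\rm r}+1}M=LI^{{\rm r}}M$ by hypothesis (c) with index $s-1$ (if $s=1$, so that (b) is vacuous, the same computation works with $L=0$, the relation $0:_{M}x_{1}^{2}=0:_{M}x_{1}$ from (a), and the trivial identity $0\cap I^{{\rm r}+1}M=0$). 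This proves $(\star_s)$.

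I expect the $i=s$ step to be the only genuine obstacle: one must notice that the $d$-sequence identity $LM:_{M}x_{s}^{2}=LM:_{M}x_{s}$ lets an arbitrary element of $[LM:_{M}x_{s}]\cap I^{{\rm r}+1}M$ be dragged into $LM$, after which hypothesis (c) — the sole place where the specific value of ${\rm r}$ and the reduction property actually interact — closes the gap. Everything else is formal manipulation with Proposition \ref{proptrung}, Remark \ref{reg<r}, Lemma \ref{aistheleast} and Lemma \ref{propleastnum}.
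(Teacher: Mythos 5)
The paper does not prove this lemma: it is quoted verbatim from Giral--Planas-Vilanova (their Theorem 5.3), so there is no in-paper argument to compare yours against. That said, your proof is correct and is a genuinely self-contained derivation from the machinery the paper does set up in Section 3. The reduction of the whole statement to the single family of equalities $(\star_1),\dots,(\star_s)$ at level ${\rm r}$ is exactly the right move: once those hold, Proposition \ref{proptrung} with $\ell={\rm r}$ propagates them to all $n\geq {\rm r}+1$, Remark \ref{reg<r} converts them into equality in (\ref{GP-V}), and Lemma \ref{aistheleast} together with Lemma \ref{propleastnum} forces the minimum defining $\mathrm{reg}\,{\mathcal R}(I,M)$ to be attained at $\ell={\rm r}$. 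Your verification of $(\star_i)$ is also sound in all cases: for $i\leq s-1$ hypothesis (b) gives $(x_1,\dots,x_{i-1})M:_Mx_i=(x_1,\dots,x_{i-1})M$ and (c) at index $i-1$ finishes (with the $i=1$ case degenerating correctly because $x_1$ is then $M$-regular); for $i=s$ you correctly isolate the one place where the $d$-sequence hypothesis is needed, namely the identity $(x_1,\dots,x_{s-1})M:_Mx_s^2=(x_1,\dots,x_{s-1})M:_Mx_s$ (the case $i=s-1$, $j=s$ of the relations spelled out before the lemma), which drags an element of $[(x_1,\dots,x_{s-1})M:_Mx_s]\cap I^{{\rm r}+1}M$ into $(x_1,\dots,x_{s-1})M$ after the decomposition $I^{{\rm r}+1}M=JI^{{\rm r}}M$; and your separate treatment of $s=1$ via $0:_Mx_1^2=0:_Mx_1$ is needed and correct. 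The only cosmetic issue is the reuse of the symbol $\ell$ both for the bound in Lemma \ref{propleastnum} and for the element of $(x_1,\dots,x_{s-1})I^{{\rm r}}M$; rename one of them.
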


\subsection{On a question of Rossi, Trung, and Trung}\label{RTT} In this part we focus on the interesting question raised in \cite[Remark 2.5]{Rossi-Dinh-Trung} as to whether, for a ring $R$ and an ideal $I$ having a minimal reduction $J$, the formula $${\rm reg}\,\mathcal{R}(I) \, = \, \mathrm{min}\,\{n \geq {\rm r}_{J}(I) \, \mid \, I^{n + 1} : I = I^{n}\}$$ holds under the hypotheses of Lemma \ref{rossidinhtrung} below. This lemma is a crucial ingredient in the proof of Theorem \ref{rossiquest}, which will lead us to partial answers to the question as will be explained in Remark \ref{partial} and recorded in Corollary \ref{our-answer}. 

\begin{Lemma}$($\cite[Theorem 2.4]{Rossi-Dinh-Trung}$)$\label{rossidinhtrung}
Let $(R, \M)$ be a two-dimensional Buchsbaum local ring with $\mathrm{depth}\,R > 0$. Let $I$ be an $\M$-primary ideal which is not a parameter ideal, and let $J$ be a minimal reduction of $I$. Then
$${\rm reg}\,\mathcal{R}(I) \, = \, \mathrm{max}\,\{{\rm r}_{J}(I),\,s^{*}(I)\} = \mathrm{min}\,\{n \geq {\rm r}_{J}(I) \, \mid \, I^{n} = \widetilde{I^n}\}.$$
\end{Lemma}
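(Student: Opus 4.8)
The plan is to determine $\mathrm{reg}\,\mathcal{R}(I)$ by combining the filter-regular description of Lemma \ref{propleastnum} with a reduction modulo a single nonzerodivisor, the Buchsbaum hypothesis being exactly what keeps the finite-length cohomology that appears under control. Since $\mathrm{depth}\,R>0$ and $I$ is $\M$-primary, $I$ contains a nonzerodivisor, and a general element of a minimal reduction is superficial for $I$; by Lemma \ref{lemmaswanhun} such an element is then a nonzerodivisor on $R$. So I would first reduce to the case $J=(x_{1},x_{2})$ with $x_{1},x_{2}$ a superficial sequence for $I$, each $x_{i}$ a nonzerodivisor on $R$, and $x_{1}t,x_{2}t$ an $\mathcal{R}(I)$-filter regular sequence (Lemma \ref{aistheleast}); the passage from this choice to an arbitrary minimal reduction should be handled by the independence of the reduction number in this setting, or by checking directly that neither side of the asserted identities depends on $J$. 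Write $\mathrm{r}=\mathrm{r}_{J}(I)$, and note $\mathrm{r}\geq 1$ because $I$ is not a parameter ideal.

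\emph{First equality.} For the bound $\mathrm{reg}\,\mathcal{R}(I)\leq\max\{\mathrm{r},s^{*}(I)\}$ I would apply Lemma \ref{propleastnum} with $\ell=\max\{\mathrm{r},s^{*}(I)\}$: the $i=1$ instance of the equality in $(\ref{GP-V})$, namely $(0:_{R}x_{1})\cap I^{n}=0$, holds for every $n$ since $x_{1}$ is a nonzerodivisor, so only the $i=2$ instance $(x_{1}I^{n}:_{R}x_{2})\cap I^{n}=x_{1}I^{n-1}$ is at issue. Given $a$ in the left-hand side, reduce modulo $x_{1}$: the ring $R'=R/x_{1}R$ is one-dimensional and Buchsbaum, $\overline{W}:=H^{0}_{\M R'}(R')$ is annihilated by $\M R'$, and $\overline{W}\cap(IR')^{n}=0$ once $n$ is at least the reduction number of $IR'$ with respect to $x_{2}R'$ (which is $\leq\mathrm{r}$), because $(IR')^{n}$ is then a positive power of $x_{2}$ times a fixed ideal and $(x_{2}R')^{k}\cap\overline{W}=0$ for $k\geq 1$ by passing to the one-dimensional Cohen--Macaulay ring $R'/\overline{W}$. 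Hence for $n\geq\mathrm{r}$ the image of $a$ in $R'$ vanishes, so $a\in x_{1}R$, say $a=x_{1}c$ with $c\in I^{n}:_{R}x_{1}$; and $I^{n}:_{R}x_{1}=I^{n-1}$ whenever $n-1\geq s^{*}(I)$ by the superficial description of $s^{*}$, so $a\in x_{1}I^{n-1}$. Thus the $i=2$ equality holds for all $n\geq\max\{\mathrm{r},s^{*}(I)+1\}$, in particular for all $n\geq\ell+1$, and Lemma \ref{propleastnum} gives $\mathrm{reg}\,\mathcal{R}(I)\leq\ell$. For the reverse inequality, $\mathrm{reg}\,\mathcal{R}(I)\geq\mathrm{r}$ is immediate from Lemma \ref{propleastnum}, while $\mathrm{reg}\,\mathcal{R}(I)\geq s^{*}(I)$ follows because $x_{1}t$ is a homogeneous nonzerodivisor of degree $1$ on $\mathcal{R}(I)$, whence $\mathrm{reg}\,\mathcal{R}(I)=\mathrm{reg}\bigl(\mathcal{R}(I)/x_{1}t\,\mathcal{R}(I)\bigr)$, and the natural surjection $\mathcal{R}(I)/x_{1}t\,\mathcal{R}(I)\twoheadrightarrow\mathcal{R}(IR',R')$ has kernel annihilated by a power of the irrelevant ideal with top nonzero degree equal to $s^{*}(I)$ (again by the superficial description of $s^{*}$), so $a_{0}\bigl(\mathcal{R}(I)/x_{1}t\,\mathcal{R}(I)\bigr)\geq s^{*}(I)$. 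This yields $\mathrm{reg}\,\mathcal{R}(I)=\max\{\mathrm{r},s^{*}(I)\}$, and the same value for $\mathcal{G}(I)$ by Lemma \ref{reg=reg}.

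\emph{Second equality.} The inequality $\min\{n\geq\mathrm{r}\mid I^{n}=\widetilde{I^{n}}\}\leq\max\{\mathrm{r},s^{*}(I)\}$ is clear since $\widetilde{I^{m}}=I^{m}$ for all $m\geq s^{*}(I)$. For the reverse it suffices to prove that, for $n\geq\mathrm{r}$, $\widetilde{I^{n}}=I^{n}$ forces $\widetilde{I^{n+1}}=I^{n+1}$; once this is known, $\{n\geq\mathrm{r}\mid I^{n}=\widetilde{I^{n}}\}$ is a final segment, and since $s^{*}(I)$ is by definition the least integer beyond which every power of $I$ is Ratliff--Rush closed, the least element of this segment is exactly $\max\{\mathrm{r},s^{*}(I)\}$. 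To prove the implication, assume $\widetilde{I^{n}}=I^{n}$ with $n\geq\mathrm{r}$. Proposition \ref{regelem}(a) gives $\widetilde{I^{n+1}}:_{R}I=\widetilde{I^{n}}=I^{n}$, hence $\widetilde{I^{n+1}}\subseteq I^{n}$ (any $z\in\widetilde{I^{n+1}}$ satisfies $Iz\subseteq\widetilde{I^{n+1}}$), and Proposition \ref{regelem}(b) applied to the superficial element $x_{2}$ gives $\widetilde{I^{n+1}}:_{R}x_{2}=\widetilde{I^{n}}=I^{n}$, hence $\widetilde{I^{n+1}}\cap x_{2}R=x_{2}I^{n}=I^{n+1}\cap x_{2}R$. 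It remains to see that $\widetilde{I^{n+1}}\subseteq I^{n+1}+x_{2}R$. Reducing modulo the nonzerodivisor $x_{2}$, the image of $\widetilde{I^{n+1}}$ in $R^{\dagger}=R/x_{2}R$ is contained in $\widetilde{(IR^{\dagger})^{n+1}}\cap(IR^{\dagger})^{n}$; passing further to $R^{\dagger}/H^{0}_{\M R^{\dagger}}(R^{\dagger})$, which is one-dimensional and Cohen--Macaulay and in which the Ratliff--Rush closure of a power of the image of $I$ coincides with that power as soon as the exponent is at least the corresponding reduction number ($\leq\mathrm{r}$), while $H^{0}_{\M R^{\dagger}}(R^{\dagger})$ is annihilated by $\M R^{\dagger}$ and meets $(IR^{\dagger})^{n}$ trivially for $n\geq\mathrm{r}$, the modular law gives $\widetilde{(IR^{\dagger})^{n+1}}\cap(IR^{\dagger})^{n}=(IR^{\dagger})^{n+1}$. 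Hence $\widetilde{I^{n+1}}\subseteq I^{n+1}+x_{2}R$, and together with $\widetilde{I^{n+1}}\cap x_{2}R=I^{n+1}\cap x_{2}R$ this yields $\widetilde{I^{n+1}}=I^{n+1}$.

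I expect the main obstacle to lie in the interaction with the choice of minimal reduction together with the bookkeeping of finite-length cohomology: reducing to a superficial sequence $x_{1},x_{2}$ of nonzerodivisors, descending through $R/x_{1}R$ and $R/x_{2}R$ and then modulo the finite-length socle submodule, and verifying at each stage that the finite-length contributions are bounded by $s^{*}(I)$ and that no term larger than $\max\{\mathrm{r}_{J}(I),s^{*}(I)\}$ can arise. The cleanest part is the lower bound $\mathrm{reg}\,\mathcal{R}(I)\geq s^{*}(I)$, which is essentially formal once $x_{1}$ is a nonzerodivisor; the Buchsbaum hypothesis --- through the fact that $\M$ annihilates $H^{1}_{\M}(R)$, so that the relevant modules in the one-dimensional quotients are socle-type --- is precisely what both forces the correction term to be exactly $s^{*}(I)$ and makes the final-segment property go through.
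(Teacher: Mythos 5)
First, a remark on context: the paper does not prove this statement at all --- it is imported verbatim from Rossi--Trung--Trung --- so your proposal has to be judged on its own terms. Your argument for the first equality is essentially sound, modulo two small points: you need an infinite residue field (or the standard base change to arrange one) for your superficial sequences to exist, and your claim that the kernel of $\mathcal{R}(I)/x_1t\,\mathcal{R}(I)\twoheadrightarrow\mathcal{R}(IR',R')$ has top nonzero degree \emph{equal} to $s^{*}(I)$ is only correct when $s^{*}(I)\geq 2$; this is harmless, since when $s^{*}(I)\leq 1\leq {\rm r}_{J}(I)$ the bound ${\rm reg}\,\mathcal{R}(I)\geq {\rm r}_{J}(I)$ already gives the lower bound you need.

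The second equality, however, has a genuine gap at the one value of $n$ where the statement has real content. Your ``final segment'' reduction is correct: everything comes down to the implication, for $n\geq {\rm r}:={\rm r}_{J}(I)$, that $\widetilde{I^{n}}=I^{n}$ forces $\widetilde{I^{n+1}}=I^{n+1}$. Your proof of this hinges on the assertion that $H^{0}_{\M R^{\dagger}}(R^{\dagger})\cap (IR^{\dagger})^{n}=0$ for all $n\geq{\rm r}$, where $R^{\dagger}=R/x_{2}R$. The only justification offered --- $(IR^{\dagger})^{n}$ is a positive power of the surviving parameter $\bar x_{1}$ times a fixed ideal, and $\bar x_{1}^{k}R^{\dagger}\cap H^{0}_{\M R^{\dagger}}(R^{\dagger})=0$ for $k\geq1$ by Buchsbaumness --- requires $n\geq r^{\dagger}+1$, where $r^{\dagger}\leq{\rm r}$ is the reduction number of $IR^{\dagger}$; so it covers $n\geq{\rm r}+1$ but not $n={\rm r}$. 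A finite-length element of $H^{0}_{\M R^{\dagger}}(R^{\dagger})$ can perfectly well lie in $(IR^{\dagger})^{{\rm r}}$ without lying in $\bar x_{1}R^{\dagger}$, so the vanishing does not follow from what you wrote. And $n={\rm r}$ is exactly the case that matters: for $n\geq{\rm r}+1$ your implication already shows $\widetilde{I^{n}}\neq I^{n}$ whenever ${\rm r}+1\leq n<s^{*}(I)$, so the entire residual content of the second equality is the single claim ``$\widetilde{I^{{\rm r}}}=I^{{\rm r}}$ implies $s^{*}(I)\leq{\rm r}$'' --- precisely the nontrivial fact quoted later in the paper from \cite[Remark 2.5]{Mafi} (in the Cohen--Macaulay case) as an input to Corollary \ref{our-answer}(b). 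Closing this requires an argument that actually uses the hypothesis $\widetilde{I^{{\rm r}}}=I^{{\rm r}}$, not merely $n\geq{\rm r}$. A secondary, fixable issue in the same passage: you invoke Proposition \ref{regelem}(b) for $x_{2}$, but $x_{2}$ was only arranged to be superficial for $I$ modulo $x_{1}$; you must choose the generators of $J$ so that each is individually superficial for $I$ (possible for general choices over an infinite residue field), and similarly your reduction modulo $x_{2}$ followed by use of $\bar x_{1}$ reverses the order in which superficiality was set up.
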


\begin{Theorem}\label{rossiquest}
For $R$, $I$, and $J$ exactly as in Lemma \ref{rossidinhtrung}, there is an equality
$${\rm reg}\,\mathcal{R}(I) \, = \, \mathrm{min}\,\{n \geq {\rm r}_{J}(I) \, \mid \, I^{m + 1} : I = I^{m} \ \ \mathit{for \ all} \ \ m \geq n\}.$$
\end{Theorem}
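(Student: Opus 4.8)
The plan is to deduce this from Lemma \ref{rossidinhtrung} together with Proposition \ref{s*conjec}, by matching up the right-hand side with the middle term $\max\{{\rm r}_{J}(I),\,s^{*}(I)\}$. Write
$$
u \, := \, \mathrm{min}\,\{n \geq {\rm r}_{J}(I) \, \mid \, I^{m + 1} : I = I^{m} \ \mathit{for \ all} \ m \geq n\}.
$$
By Lemma \ref{rossidinhtrung} it suffices to prove $u = \max\{{\rm r}_{J}(I),\,s^{*}(I)\}$. Recall from Proposition \ref{s*conjec} (applied with $M=R$, which is legitimate since $I$ is $\M$-primary, hence contains a regular element because $\mathrm{depth}\,R>0$) that
$$
s^{*}(I) \, = \, \mathrm{min}\,\{m \geq 1 \, \mid \, I^{n + 1} : I = I^{n} \ \mathit{for \ all} \ n \geq m \}.
$$

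First I would show $u \geq \max\{{\rm r}_{J}(I),\,s^{*}(I)\}$. The inequality $u \geq {\rm r}_{J}(I)$ is immediate from the definition of $u$. For $u \geq s^{*}(I)$: by definition $u$ satisfies $I^{m+1}:I = I^{m}$ for all $m \geq u$, and also $u \geq {\rm r}_{J}(I) \geq 1$; hence $u$ belongs to the set whose minimum defines $s^{*}(I)$ in the Proposition \ref{s*conjec} description, so $u \geq s^{*}(I)$.

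Conversely, I would show $u \leq \max\{{\rm r}_{J}(I),\,s^{*}(I)\}$. Set $N := \max\{{\rm r}_{J}(I),\,s^{*}(I)\}$. Then $N \geq {\rm r}_{J}(I)$, and $N \geq s^{*}(I)$ means, by the Proposition \ref{s*conjec} formula, that $I^{m+1}:I = I^{m}$ for all $m \geq s^{*}(I)$, in particular for all $m \geq N$. Thus $N$ lies in the set defining $u$, so $u \leq N$. Combining the two inequalities gives $u = N = \max\{{\rm r}_{J}(I),\,s^{*}(I)\} = {\rm reg}\,\mathcal{R}(I)$ by Lemma \ref{rossidinhtrung}, as desired.

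**Main obstacle.** There is essentially no obstacle here — the argument is a bookkeeping comparison of three minima. The only point requiring a moment's care is the legitimacy of invoking Proposition \ref{s*conjec} with $M=R$ and of identifying $s^{*}(I)$ with the quantity appearing in Lemma \ref{rossidinhtrung}: one must check that $I$ contains a regular element (guaranteed by $\M$-primariness and $\mathrm{depth}\,R > 0$) so that $s^{*}(I)$ is defined and Lemma \ref{propnaghipour} applies, and one must note that the convention $s^{*}(I) \geq 1$ matches the constraint $n \geq {\rm r}_{J}(I) \geq 1$ (here ${\rm r}_{J}(I) \geq 1$ since $I$ is not a parameter ideal, hence $J \neq I$). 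Once these are in place, the equality falls out immediately.
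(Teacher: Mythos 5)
Your proof is correct and follows essentially the same route as the paper: both arguments reduce the statement to Lemma \ref{rossidinhtrung} combined with the description of $s^{*}(I)$ from Proposition \ref{s*conjec}. The only cosmetic difference is that you prove the two inequalities $u \geq \max\{{\rm r}_{J}(I),\,s^{*}(I)\}$ and $u \leq \max\{{\rm r}_{J}(I),\,s^{*}(I)\}$ directly, whereas the paper splits into the cases $s^{*}(I) \leq {\rm r}_{J}(I)$ (handled via Proposition \ref{regelem}(a)) and ${\rm r}_{J}(I) < s^{*}(I)$; your explicit check that ${\rm r}_{J}(I) \geq 1$, which is needed to place $u$ in the set defining $s^{*}(I)$, is a point the paper leaves implicit.
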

\begin{proof} By Lemma \ref{rossidinhtrung}, ${\rm reg}\,\mathcal{R}(I) = \mathrm{max}\{{\rm r}_{J}(I), s^{*}(I)\}$. Assume first that $s^{*}(I) \leq {\rm r}_{J}(I)$. Note that, being an $\M$-primary ideal, $I$ contains a regular element since $\mathrm{depth}\,R > 0$. Thus, Proposition \ref{regelem}(a) yields $I^{m + 1} : I = \widetilde{I^{m + 1}} : I = \widetilde{I^{m}} = I^{m}$ for all $m \geq {\rm r}_{J}(I)$, and therefore  $${\rm reg}\,\mathcal{R}(I) \, = \, {\rm r}_{J}(I) \, = \, \mathrm{min}\,\{n \geq {\rm r}_{J}(I) \, \mid \, I^{m + 1} : I = I^{m} \ \mathrm{for \ all} \ m \geq n\}.$$ Now suppose ${\rm r}_{J}(I) < s^{*}(I)$. Thus ${\rm reg}\,\mathcal{R}(I) = s^{*}(I)$. On the other hand, Proposition \ref{s*conjec} gives
$s^{*}(I)  = \mathrm{min}\{m \geq 1 \mid I^{n + 1} : I = I^{n} \ \mathrm{for \ all} \ n \geq m \}$. Hence we can write ${\rm reg}\,\mathcal{R}(I) = \mathrm{min}\{m \geq {\rm r}_{J}(I) \mid I^{n + 1} : I = I^{n} \ \mathrm{for \ all} \ n \geq m\}$, as needed. \qed
\end{proof}

\begin{Remark}\label{partial}\rm We point out that if $s^{*}(I) \leq {\rm r}_{J}(I) + 1$ then our theorem (with the crucial description of $s^*(I)$ given by Proposition \ref{s*conjec}) settles affirmatively the problem of Rossi-Trung-Trung. This is easily seen to be true in the situation $s^{*}(I) \leq {\rm r}_{J}(I)$. Now suppose $s^{*}(I) = {\rm r}_{J}(I) + 1$, and write ${\rm r}_{J}(I)={\rm r}$. Then necessarily  $$I^{{\rm r} + 1} : I \, \neq \, I^{{\rm r}}.$$ It thus follows that Theorem \ref{rossiquest} solves the problem once again. Another situation where our result answers affirmatively the question is when $R$ is Cohen-Macaulay and $\widetilde{I^{\rm r}}=I^{\rm r}$; this holds because, by \cite[Remark 2.5]{Mafi}, we must have $s^{*}(I) \leq {\rm r}_{J}(I)$ in this case, and then we are done by a previous comment. Below we record such facts as a corollary.
\end{Remark}

\begin{Corollary}\label{our-answer}
Let $R$, $I$, and $J$ be exactly as in Lemma \ref{rossidinhtrung}. Suppose in addition any one of the following conditions: 
\begin{itemize}
    \item[(a)] $s^{*}(I) \leq {\rm r}_{J}(I) + 1$;
    \item[(b)] $R$ is Cohen-Macaulay and $\widetilde{I^{\rm r}}=I^{\rm r}$, where ${\rm r}={\rm r}_{J}(I)$.
    \end{itemize}
Then the Rossi-Trung-Trung question has an affirmative answer.

\end{Corollary}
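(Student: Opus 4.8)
The statement to be proved is Corollary~\ref{our-answer}, which is essentially a bookkeeping consequence of Theorem~\ref{rossiquest} together with Remark~\ref{partial}. The plan is to deduce it in each of the two cases directly from the description of ${\rm reg}\,\mathcal{R}(I)$ obtained in Theorem~\ref{rossiquest}, namely
$${\rm reg}\,\mathcal{R}(I) \, = \, \mathrm{min}\,\{n \geq {\rm r}_{J}(I) \, \mid \, I^{m + 1} : I = I^{m} \ \text{for all} \ m \geq n\},$$
and to show that under (a) or (b) this coincides with the \emph{a priori} smaller-looking expression $\mathrm{min}\{n \geq {\rm r}_{J}(I) \mid I^{n+1} : I = I^{n}\}$ appearing in the Rossi--Trung--Trung question. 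The point is that, in general, the set $\{n \geq {\rm r} \mid I^{n+1}:I = I^{n}\}$ need not be upward closed, so its minimum could be strictly less than the regularity; under (a) or (b) one shows the two sets have the same minimum.

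\textbf{Case (a): $s^{*}(I) \leq {\rm r}_{J}(I) + 1$.} Write ${\rm r} = {\rm r}_{J}(I)$. By Lemma~\ref{rossidinhtrung}, ${\rm reg}\,\mathcal{R}(I) = \mathrm{max}\{{\rm r}, s^{*}(I)\}$. If $s^{*}(I) \leq {\rm r}$, then ${\rm reg}\,\mathcal{R}(I) = {\rm r}$; since $I$ is $\M$-primary and $\mathrm{depth}\,R > 0$, $I$ contains a regular element, so Proposition~\ref{regelem}(a) gives $I^{m+1}:I = \widetilde{I^{m+1}}:I = \widetilde{I^{m}} = I^{m}$ for all $m \geq {\rm r}$ (using $\widetilde{I^{m}} = I^{m}$ for $m \geq s^{*}(I)$). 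Hence ${\rm r}$ already lies in the set $\{n \geq {\rm r} \mid I^{n+1}:I = I^{n}\}$, so that set has minimum ${\rm r} = {\rm reg}\,\mathcal{R}(I)$, and the question is answered affirmatively. If instead $s^{*}(I) = {\rm r} + 1$, then ${\rm reg}\,\mathcal{R}(I) = {\rm r} + 1$, and I must rule out ${\rm r}$ itself belonging to the set: indeed $\widetilde{I^{{\rm r}}} \neq I^{{\rm r}}$ by minimality of $s^{*}(I) = {\rm r}+1$, while Proposition~\ref{regelem}(a) gives $I^{{\rm r}+1}:I = \widetilde{I^{{\rm r}+1}}:I = \widetilde{I^{{\rm r}}}$ (here $\widetilde{I^{{\rm r}+1}} = I^{{\rm r}+1}$ since ${\rm r}+1 = s^{*}(I)$); hence $I^{{\rm r}+1}:I \neq I^{{\rm r}}$, so ${\rm r}$ is not in the set. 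By Theorem~\ref{rossiquest}, ${\rm r}+1$ is, and it is the minimum of the upward-closed set, hence \emph{a fortiori} the minimum of the (possibly larger-indexed) set $\{n \geq {\rm r} \mid I^{n+1}:I = I^{n}\}$. Thus the two minima agree with ${\rm reg}\,\mathcal{R}(I)$ and the question is settled.

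\textbf{Case (b): $R$ Cohen--Macaulay and $\widetilde{I^{{\rm r}}} = I^{{\rm r}}$.} The strategy here is to reduce to case (a) by showing $s^{*}(I) \leq {\rm r}$. For this I invoke \cite[Remark~2.5]{Mafi}, which (in the Cohen--Macaulay two-dimensional setting of Lemma~\ref{rossidinhtrung}) asserts precisely that $\widetilde{I^{{\rm r}}} = I^{{\rm r}}$ forces $s^{*}(I) \leq {\rm r}$. Granting this, Lemma~\ref{propnaghipour}(b) (the chain $\widetilde{I^{{\rm r}}_{M}} \supseteq \widetilde{I^{{\rm r}+1}_{M}} \supseteq \cdots$ with $M = R$) gives $\widetilde{I^{i}} = I^{i}$ for all $i \geq {\rm r}$, i.e.\ $s^{*}(I) \leq {\rm r}$, and we are back in the first subcase of (a), which has already been disposed of. So the conclusion follows.

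\textbf{Main obstacle.} There is no serious analytic obstacle; the work has all been done in Theorem~\ref{rossiquest}, Proposition~\ref{s*conjec}, and Proposition~\ref{regelem}. The only delicate point is the logical one — verifying that the \emph{non}-upward-closed set $\{n \geq {\rm r} \mid I^{n+1}:I = I^{n}\}$ has the same minimum as the upward-closed set of Theorem~\ref{rossiquest} — which in case (a) boils down to the explicit computation $I^{{\rm r}+1}:I = \widetilde{I^{{\rm r}}} \neq I^{{\rm r}}$ when $s^{*}(I) = {\rm r}+1$, and in case (b) to correctly citing \cite[Remark~2.5]{Mafi} to collapse back into case (a). I would present the corollary's proof as a short paragraph citing Remark~\ref{partial}, since all the ingredients are already assembled there.
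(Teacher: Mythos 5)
Your argument is correct and follows essentially the same route as the paper, whose proof of this corollary is precisely the content of Remark \ref{partial}: the subcase $s^{*}(I)\leq {\rm r}_J(I)$ via Proposition \ref{regelem}(a), the subcase $s^{*}(I)={\rm r}_J(I)+1$ via the failure $I^{{\rm r}+1}:I\neq I^{\rm r}$, and case (b) by citing \cite[Remark 2.5]{Mafi} to reduce to the first subcase. Your extra appeal to Lemma \ref{lemmanaghipour}(b) in case (b) is redundant (and the chain alone would not give what you claim), but it is harmless since Mafi's remark already yields $s^{*}(I)\leq {\rm r}_J(I)$.
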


As far as we know, under the hypotheses of Lemma \ref{rossidinhtrung} there is no example satisfying $s^*(I)>{\rm r}_{J}(I) + 1$. It is thus natural to raise the following question, to which an affirmative answer would imply the definitive solution of the Rossi-Trung-Trung problem.
 
\begin{Question}\rm Under the hypotheses of Lemma \ref{rossidinhtrung}, is it true that $s^{*}(I) \leq {\rm r}_{J}(I) + 1$?
 \end{Question}

Regarding the setting relative to a module, a question is in order.

\begin{Question}\rm Let $R$ be exactly as in Lemma \ref{rossidinhtrung}. Let $I$ be an $\M$-primary ideal and let $J$ be a minimal reduction of $I$ relative to a Buchsbaum $R$-module $M$, satisfying ${\rm r}_J(I, M)\geq 1$. We raise the question of whether any of the following equalities
\begin{itemize}
    \item[(a)] ${\rm reg}\,\mathcal{R}(I, M) = \mathrm{max}\,\{{\rm r}_{J}(I, M),\,s^{*}(I, M)\}$, 
    \item[(b)] ${\rm reg}\,\mathcal{R}(I, M) = \mathrm{min}\,\{n \geq {\rm r}_{J}(I, M) \mid I^{m + 1}M :_M I = I^{m}M \ \mathrm{for \ all} \ m \geq n\}$
    \end{itemize} is true. Notice that the assertions (a) and (b) are in fact equivalent; the implication (a)$\Rightarrow$(b) can be easily seen by using Theorem \ref{Mafigene} (to be proven in the next section) and Proposition \ref{s*conjec}, whereas (b)$\Rightarrow$(a) follows easily with the aid of Proposition \ref{s*conjec}. We might even conjecture that such equalities hold in the particular case where $R$ and $M$ are Cohen-Macaulay.
\end{Question}

\section{A generalization of a result of Mafi}\label{gen-of-Mafi}

Our main goal in this section is to establish Theorem \ref{Mafigene}, which concerns the interplay between the numbers ${\rm r}_{J}(I, M)$ and  $\mathrm{reg}\,{\mathcal R}(I, M)$ in a suitable setting. As we shall explain, our theorem generalizes a result due to Mafi from \cite{Mafi} (see Corollary \ref{Mafigene2}) and gives us a number of other applications, to be described in Section \ref{app} and also in Section \ref{Ulrich}. As a matter of notation, it is standard to write ${\rm grade}(I, M)$ for the maximal length of an $M$-sequence contained in the ideal $I$ of the local ring $(R, \M)$. If $M=R$, the notation is simplified to ${\rm grade}\,I$. Note that ${\rm grade}(I, M)$ is just the $I$-${\rm depth}$ of $M$; in particular, ${\rm grade}(\M, M)={\rm depth}\,M$. 

\smallskip

First we recall two general lemmas.

\begin{Lemma}$($\cite[Lemma 1.2]{Rossi-Valla}$)$\label{LemmaRV0}
Let $R$ be a local ring, $M$ a finite $R$-module of positive dimension, and $I$ a proper ideal. Let $x_1, \ldots, x_s$ be an $M$-superficial sequence of $I$. Then $x_1, \ldots, x_s$ is an $M$-sequence if and only if ${\rm grade}(I, M)\geq s$.
\end{Lemma}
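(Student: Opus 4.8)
The ``only if'' direction is immediate: if $x_1,\ldots,x_s$ is an $M$-sequence then, being an $M$-regular sequence contained in $I$, it witnesses ${\rm grade}(I,M)\ge s$ by the very definition of grade. So the content is the converse, which I plan to prove by induction on $s$, the case $s=0$ being vacuous.

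For the base case $s=1$ I would argue as follows. Here $x_1$ is an $M$-superficial element of $I$. Since $R$ is local, $\bigcap_n I^nM=0$ by Krull's intersection theorem, and since ${\rm grade}(I,M)\ge 1$ the ideal $I$ contains an $M$-regular element; hence Lemma \ref{lemmaswanhun} applies and yields that $x_1$ itself is $M$-regular. Because $x_1\in I\subseteq\M$, Nakayama gives $x_1M\ne M$, so $\{x_1\}$ is an $M$-sequence, as desired.

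For the inductive step $s\ge 2$, the plan is first to apply the base case to $x_1$ (legitimate since ${\rm grade}(I,M)\ge s\ge 1$), which shows $x_1$ is $M$-regular. Then I pass to $\overline M=M/x_1M$ and $\overline I=I/(x_1)$, which is a proper ideal of the local ring $R/(x_1)$. By the very definition of an $M$-superficial sequence, the images $\overline{x_2},\ldots,\overline{x_s}$ form an $\overline M$-superficial sequence of $\overline I$. Moreover, since $x_1\in I$ is $M$-regular, grade drops by exactly one modulo $x_1$, so ${\rm grade}(\overline I,\overline M)={\rm grade}(I,M/x_1M)={\rm grade}(I,M)-1\ge s-1\ge 1$; in particular $\overline M\ne 0$ and $\dim\overline M\ge{\rm grade}(\overline I,\overline M)>0$, so all hypotheses of the lemma hold for $(R/(x_1),\overline M,\overline I)$. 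The inductive hypothesis then gives that $\overline{x_2},\ldots,\overline{x_s}$ is an $\overline M$-sequence, i.e.\ $x_2,\ldots,x_s$ is an $(M/x_1M)$-sequence; combined with the $M$-regularity of $x_1$, this is precisely the assertion that $x_1,\ldots,x_s$ is an $M$-sequence.

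The argument is mostly bookkeeping, and the only places needing a little care are keeping the relevant quotient modules nonzero and of positive dimension along the induction — handled by the standard inequality ${\rm grade}(I,-)\le\dim(-)$ for finite modules not annihilated modulo $I$, together with the fact that grade drops by exactly $1$ modulo an $M$-regular element of $I$ — and the observation that the superficial-sequence property is, by construction, inherited by the quotient obtained from passing from $M$ to $M/x_1M$. The one genuinely substantive input is Lemma \ref{lemmaswanhun} (which itself rests on Krull's intersection theorem), which converts ``superficial'' into ``regular'' in the base case; everything else is formal.
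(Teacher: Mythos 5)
The paper does not prove this lemma; it is quoted verbatim from Rossi--Valla \cite[Lemma 1.2]{Rossi-Valla}, so there is no in-paper argument to compare against. Your proof is correct and is the standard one: the only substantive input is Lemma \ref{lemmaswanhun} (superficial $\Rightarrow$ regular, via Krull intersection plus the existence of an $M$-regular element in $I$ guaranteed by ${\rm grade}(I,M)\geq 1$), and the induction is justified because the superficial-sequence property passes to $M/x_1M$ by definition while grade drops by exactly one modulo the $M$-regular element $x_1\in I$, keeping the quotient nonzero and of positive dimension at each stage.
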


\begin{Lemma}$($\cite[p.\,12]{Rossi-Valla}$)$\label{LemmaRV} Let $(R, \M)$ be a local ring with infinite residue field, $M$ a finite $R$-module of positive dimension, $I$ an $\M$-primary ideal, and $J$ a minimal reduction of $I$ relative to $M$. Then $J$ can be generated by an $M$-superficial sequence of $I$, which is also a system of parameters of $M$. 
\end{Lemma}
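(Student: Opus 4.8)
The plan is to construct the generators of $J$ greedily, one at a time, arranging at each step finitely many open ``generic‑position'' conditions that are satisfiable precisely because the residue field $k:=R/\M$ is infinite. Throughout, put $d:=\dim M>0$.

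First I would dispose of the system‑of‑parameters part. Since $I$ is $\M$-primary and $JI^{n}M=I^{n+1}M$ for some $n$, we have $\sqrt{J+\Ann M}=\sqrt{I+\Ann M}=\M$, i.e. $\dim M/JM=0$, while $J\subseteq I\subseteq\M$ forces $JM\neq M$; Krull's principal ideal theorem, applied to $M$, then shows that any generating set of $J$ has at least $d$ elements (each generator lowers the dimension by at most one). For the opposite bound I would pass to the associated graded module: $\mathcal{G}(I,M)$ is a finite graded module of dimension $d$ over the standard graded algebra $\mathcal{G}(I)$, whose degree‑zero piece $R/I$ is Artinian with infinite residue field $k$. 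Because $J$ is a reduction, the degree‑one leading forms of $J$ generate an ideal of $\mathcal{G}(I)$ whose cokernel on $\mathcal{G}(I,M)$ has finite length in large degrees; a graded Noether normalization argument then shows that already $d$ sufficiently general elements of $J$ have leading forms with this property, hence (via the characterizations underlying Lemma \ref{aistheleast} and Lemma \ref{propleastnum}) those $d$ elements form a reduction of $I$ relative to $M$. By minimality of $J$ they must generate $J$, so $\mu(J)=d$ and every minimal generating set of $J$ is a system of parameters for $M$.

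Next comes the real work: arranging such a minimal generating set to be an $M$-superficial sequence, which I would do by induction on the index. Suppose $x_{1},\dots,x_{i-1}\in J$ have been selected in general position so that they form an $M$-superficial sequence and extend to a minimal generating set of $J$. Set $\overline{M}=M/(x_{1},\dots,x_{i-1})M$ and let $\overline{I}$ be the image of $I$ in $\overline{R}=R/(x_{1},\dots,x_{i-1})$; note $\overline{J}=J\overline{R}$ is still a reduction of $\overline{I}$ relative to $\overline{M}$, since reductions descend to quotient modules. Let $\mathcal{P}$ be the finite set of associated primes of $\mathcal{G}(\overline{I},\overline{M})$ over $\mathcal{G}(\overline{I})$ that do not contain $\mathcal{G}(\overline{I})_{+}$. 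By the leading‑form description of superficiality (Lemma \ref{lemmaswanhun1} and the discussion around it), an element of $J$ completing $x_{1},\dots,x_{i-1}$ to a minimal generating set is $M/(x_{1},\dots,x_{i-1})M$-superficial for $I/(x_{1},\dots,x_{i-1})$ exactly when its leading form in $\mathcal{G}(\overline{I})_{1}$ avoids every $P\in\mathcal{P}$. The leading forms available at this stage span a $k$-subspace $V\subseteq\mathcal{G}(\overline{I})_{1}$, and since $\overline{J}$ reduces $\overline{I}$ relative to $\overline{M}$, the ideal generated by $V$ has radical, modulo $\Ann\mathcal{G}(\overline{I},\overline{M})$, containing $\mathcal{G}(\overline{I})_{+}$, so $V\not\subseteq P$ for every $P\in\mathcal{P}$. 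As $k$ is infinite, vector‑space prime avoidance furnishes $x_{i}$ whose leading form lies in $V\setminus\bigcup_{P\in\mathcal{P}}P$ and which still completes $x_{1},\dots,x_{i-1}$ to a minimal generating set of $J$. Letting $i$ run from $1$ to $d$ produces a minimal generating set $x_{1},\dots,x_{d}$ of $J$ that is an $M$-superficial sequence of $I$, and by the preceding paragraph it is also a system of parameters for $M$, which is the assertion.

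The hard part is the genericity bookkeeping in the inductive step: one must be sure, at each stage, that the subspace $V$ of currently available leading forms is not swallowed by any of the finitely many relevant associated primes. This is exactly the place where both hypotheses are indispensable — the reduction property of $J$, which passes to the quotients $\overline{M}$ and drives the leading forms of $J$ outside every prime missing the irrelevant ideal, and the infiniteness of $k$, which converts ``not contained in a proper subspace'' into ``a general element avoids it''. The two dimension counts, $\dim M/JM=0$ and $\mu(J)=d$, are then routine.
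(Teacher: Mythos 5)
The paper does not prove this lemma—it is quoted directly from Rossi–Valla—and your argument is a correct reconstruction of the standard proof given there: Krull's height theorem plus a graded Noether normalization of the fiber of $\mathcal{R}(I,M)$ give $\mu(J)=\dim M$ and the system-of-parameters claim, and iterated prime avoidance in $\mathcal{G}(\overline{I},\overline{M})$ over the infinite residue field produces the superficial generating sequence. The only blemishes are cosmetic (the superficiality criterion you need is the ``initial form avoids the associated primes of $\mathcal{G}(I,M)$ not containing $\mathcal{G}(I)_+$'' characterization, not really Lemmas \ref{aistheleast}, \ref{propleastnum}, or \ref{lemmaswanhun1}, and the two avoidance conditions should be combined by passing to proper subspaces of $J/\M J$ via Nakayama), so the proof stands.
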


Our theorem (which also holds in an appropriate  graded setting) is as follows.

\begin{Theorem}\label{Mafigene}
	Let $(R, \M)$ be a local ring with infinite residue field, $M$ a Cohen-Macaulay $R$-module of dimension $s\geq 1$, $I$ an $\M$-primary ideal, and $J = (x_1, \ldots, x_s)$ a minimal reduction of $I$ relative to $M$. Set ${\rm r} = {\rm r}_{J}(I, M)$ and $M_j = M/(x_{1}, \ldots, x_{j - 1})M$ with $M_1=M$. Assume that either ${\rm r} = 0$, or ${\rm r} \geq 1$ and $\widetilde{I^{\rm r}_{M_{j}}}  =  I^{\rm r}M_{j}$ for $j = 1, \ldots, s-1$ {\rm (}if $s\geq 2${\rm )}.  Then
	$$\mathrm{reg}\,{\mathcal R}(I, M) \, = \, \mathrm{reg}\,{\mathcal G}(I, M) \, = \, {\rm r}_{J}(I, M).$$ In particular, ${\rm r}(I, M)$ is independent.
\end{Theorem}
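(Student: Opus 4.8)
The plan is to reduce to the ``absolute'' case already handled by the earlier results (Lemma~\ref{GPV} and Remark~\ref{cotareg}) by means of an induction on $s = \dim M$, using superficial sequences to perform hyperplane sections. First I would invoke Lemma~\ref{LemmaRV} to assume $J = (x_1,\dots,x_s)$ is generated by an $M$-superficial sequence of $I$ which is also a system of parameters of $M$; since $M$ is Cohen-Macaulay of dimension $s$ we have ${\rm grade}(I,M) = s$ (as $I$ is $\M$-primary), so by Lemma~\ref{LemmaRV0} this superficial sequence is actually an $M$-sequence. By Lemma~\ref{lemmaswanhun} each $x_i$ is regular on $M_i$, and $M_{s} = M/(x_1,\dots,x_{s-1})M$ is Cohen-Macaulay of dimension $1$.

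Next I would verify the hypotheses of Lemma~\ref{GPV} for the reduction $J$ of $I$ relative to $M$. Condition (b) (that $x_1,\dots,x_{s-1}$ is an $M$-sequence) is immediate from the previous paragraph, and condition (a) (that $x_1,\dots,x_s$ is a $d$-sequence relative to $M$) follows because an $M$-regular sequence is in particular a $d$-sequence relative to $M$ (this is the parenthetical remark preceding Lemma~\ref{GPV}). The real content is condition (c): one must show
$$(x_{1}, \ldots, x_{i})M \cap I^{{\rm r} + 1}M \, = \, (x_{1}, \ldots, x_{i})I^{{\rm r}}M, \qquad i = 1, \ldots, s - 1.$$
Here is where the Ratliff-Rush hypothesis $\widetilde{I^{\rm r}_{M_j}} = I^{\rm r}M_j$ enters. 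Working modulo $(x_1,\dots,x_{i-1})M$, the statement for index $i$ becomes $\overline{x_i}\,M_i \cap I^{{\rm r}+1}M_i = \overline{x_i}\,I^{\rm r}M_i$, i.e.\ $I^{{\rm r}+1}M_i :_{M_i} \overline{x_i} \supseteq$ the relevant module implies it lies in $I^{\rm r}M_i$; this is precisely an assertion controlled by $I^{{\rm r}+1}M_i :_{M_i} \overline{x_i}$ versus $I^{\rm r}M_i$. Using Proposition~\ref{regelem}(b) (which says $\widetilde{I^{{\rm r}+1}_{M_i}} :_{M_i} \overline{x_i} = \widetilde{I^{\rm r}_{M_i}}$ for the superficial element $\overline{x_i}$) together with the hypothesis $\widetilde{I^{\rm r}_{M_i}} = I^{\rm r}M_i$ and the chain $\widetilde{I^{{\rm r}+1}_{M_i}} \subseteq \widetilde{I^{\rm r}_{M_i}}$ from Lemma~\ref{lemmanaghipour}(b), one gets $I^{{\rm r}+1}M_i :_{M_i}\overline{x_i} \subseteq \widetilde{I^{{\rm r}+1}_{M_i}}:_{M_i}\overline{x_i} = I^{\rm r}M_i$, which intersected back against $I^{\rm r}M_i$ (after clearing $\overline{x_i}$) yields condition (c). The case ${\rm r}=0$ is degenerate and handled separately: then $JM = IM$, so $I$ is a reduction with reduction number zero and Remark~\ref{cotareg} with $k=1$ gives ${\rm reg}\,\mathcal{R}(I,M) = {\rm r}_J(I,M) = 0$ directly (alternatively, ${\rm r}=0$ forces the relevant intersections to be trivial).

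With conditions (a), (b), (c) of Lemma~\ref{GPV} in hand, that lemma immediately delivers ${\rm reg}\,\mathcal{R}(I,M) = {\rm r}_J(I,M)$, and the equality ${\rm reg}\,\mathcal{R}(I,M) = {\rm reg}\,\mathcal{G}(I,M)$ is the general fact Lemma~\ref{reg=reg}. Finally, for the ``in particular'' clause: for \emph{every} minimal reduction $J'$ of $I$ relative to $M$, Lemma~\ref{LemmaRV} lets us again take $J'$ generated by an $M$-superficial sequence, and the hypothesis $\widetilde{I^{{\rm r}'}_{M_j'}} = I^{{\rm r}'}M_j'$ is not assumed for $J'$ with ${\rm r}' = {\rm r}_{J'}(I,M)$ a priori---so one must argue differently. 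The cleanest route is: the already-established equality gives ${\rm reg}\,\mathcal{R}(I,M) = {\rm r}_J(I,M) = {\rm r}(I,M)$ for our chosen $J$ (one should note any $J$ satisfying the hypotheses achieves the minimal reduction number, or simply that the hypothesis is on $I$ via this particular $J$); then since ${\rm reg}\,\mathcal{R}(I,M)$ is an invariant of $I$ and $M$ not depending on the choice of reduction, and since Remark~\ref{cotareg} gives ${\rm r}_{J'}(I,M) \leq {\rm reg}\,\mathcal{R}(I,M) = {\rm r}(I,M) \leq {\rm r}_{J'}(I,M)$ for any minimal reduction $J'$ whose generators form a filter-regular sequence (which Lemma~\ref{LemmaRV}, Lemma~\ref{aistheleast} and Lemma~\ref{lemmaswanhun1} guarantee), we get ${\rm r}_{J'}(I,M) = {\rm r}(I,M)$ for all such $J'$, i.e.\ independence. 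I expect the main obstacle to be the careful bookkeeping in verifying condition (c) of Lemma~\ref{GPV}---in particular making sure the Ratliff-Rush closures behave correctly under passage to $M_j$ and that Proposition~\ref{regelem}(b) applies to the image of $x_j$, which requires knowing $\overline{x_j}$ remains $M_j$-superficial and $M_j$-regular (the latter from Lemma~\ref{lemmaswanhun}, the former from the definition of superficial sequence).
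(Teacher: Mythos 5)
Your overall architecture is exactly the paper's: reduce to Lemma \ref{GPV} by noting that a superficial system of generators of $J$ is an $M$-sequence (Lemma \ref{LemmaRV}, Lemma \ref{LemmaRV0}, Cohen--Macaulayness), so that (a) and (b) hold, and then use Proposition \ref{regelem}(b) together with the Ratliff--Rush hypotheses in the quotients $M_i$ to verify condition (c); the colon computation $I^{{\rm r}+1}M_i :_{M_i}\overline{x_i}\subseteq \widetilde{I^{{\rm r}+1}_{M_i}}:_{M_i}\overline{x_i}=\widetilde{I^{\rm r}_{M_i}}=I^{\rm r}M_i$ is precisely the paper's key display, and your caveats about $\overline{x_i}$ remaining superficial and about $\widetilde{I^n_{M_i}}=\widetilde{(I_i)^n_{M_i}}$ are the right ones. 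The treatment of ${\rm r}=0$ and of independence is also in line with what the paper does.

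There is, however, one genuine gap: your assertion that, modulo $(x_1,\dots,x_{i-1})M$, condition (c) for index $i$ ``becomes'' $\overline{x_i}M_i\cap I^{{\rm r}+1}M_i=\overline{x_i}I^{\rm r}M_i$ is not an equivalence. The quotient statement (which your colon argument correctly proves) only yields
$(x_1,\dots,x_i)M\cap I^{{\rm r}+1}M\subseteq x_iI^{\rm r}M+(x_1,\dots,x_{i-1})M$; to finish, you must write such an element as $x_iu+v$ with $u\in I^{\rm r}M$, observe that $v\in (x_1,\dots,x_{i-1})M\cap I^{{\rm r}+1}M$, and then invoke condition (c) \emph{for index $i-1$} to place $v$ in $(x_1,\dots,x_{i-1})I^{\rm r}M$. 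So the verification of (c) is necessarily an induction on $i$ (base case $i=1$ from $\widetilde{I^{\rm r}_M}=I^{\rm r}M$ and Proposition \ref{regelem}(b)), with the quotient computation in $M_i$ supplying only the inductive step's new input; this is exactly the content of the paper's intermediate claim and the short intersection computation that follows it. Your proof as written never feeds the case $i-1$ back in, and the announced ``induction on $s$'' from your opening paragraph is not the induction actually needed. The gap is mechanical to fill, but as stated the step from the $M_i$-level identity back to condition (c) in $M$ does not go through.
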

\begin{proof} First, by Lemma \ref{reg=reg}, the equality $\mathrm{reg}\,{\mathcal R}(I, M) = \mathrm{reg}\,{\mathcal G}(I, M)$ holds. Thus our objective is to prove that $\mathrm{reg}\,{\mathcal R}(I, M) = {\rm r}$. 

Because $M$ is Cohen-Macaulay and $I$ is $\M$-primary, we have $s  =  \mathrm{depth}\,M =  \mathrm{grade}(I, M)$
and then $x_{1}, \ldots, x_{s}$ must be in fact an $M$-sequence by Lemma \ref{LemmaRV0}. As a consequence, conditions (a) and (b) of Lemma \ref{GPV} are satisfied. We shall prove that \begin{equation}\label{main}(x_{1}, \ldots, x_{i})M \, \cap \, I^{{\rm r} + 1}M \, = \, (x_{1}, \ldots, x_{i})I^{{\rm r}}M, \ \ \ \ \ i = 1, \ldots, s - 1 \end{equation} and then the desired formula $\mathrm{reg}\,{\mathcal R}(I, M) = {\rm r}$ will follow immediately by Lemma \ref{GPV}. Notice that the condition is vacuous if $s=1$ (both sides of (\ref{main}) are regarded as zero), so we can suppose $s\geq 2$ from now on.
		
The case ${\rm r} = 0$ is now trivial because $(x_{1}, \ldots, x_{i})M \subseteq IM$, or what amounts to the same, $(x_{1}, \ldots, x_{i})M \cap IM = (x_{1}, \ldots, x_{i})M$, for all $i = 1, \ldots, s - 1$.


So let us assume that ${\rm r}\geq 1$ and $\widetilde{I^{\rm r}_{M_{j}}}  =  I^{\rm r}M_{j}$ for $j = 1, \ldots, s-1$. We  proceed by induction on $i$. First, pick an element $f  \in  (x_{1})M  \cap  I^{{\rm r} + 1}M$. Then there exists $m \in M$ such that $f = x_{1}m \in I^{{\rm r} + 1}M$. By Proposition \ref{regelem}(b) and the hypothesis that $\widetilde{I^{{\rm r}}_{M}} \, = \, I^{{\rm r}}M$, we have $$m \, \in \, I^{{\rm r} + 1}M :_{M} x_{1} \, \subseteq \, \widetilde{I^{{\rm r} + 1}_{M}} :_{M} x_{1} \, = \, \widetilde{I^{{\rm r}}_{M}} \, = \, I^{{\rm r}}M,$$ so that $f = x_{1}m \in (x_{1})I^{{\rm r}}M$, which shows $(x_{1})M  \cap  I^{{\rm r} + 1}M  =  (x_{1})I^{{\rm r}}M$. Now let $i\geq 2$ and suppose \begin{equation}\label{suppose}
	(x_{1}, \ldots, x_{i - 1})M \, \cap \, I^{{\rm r} + 1}M \, = \, (x_{1}, \ldots, x_{i - 1})I^{{\rm r}}M.
\end{equation} We claim that
\begin{equation}\label{claimed}
	[I^{{\rm r} + 1}M + (x_{1}, \ldots, x_{i - 1})M] \, \cap \, (x_{1}, \ldots, x_{i})M \, = \, (x_{1}, \ldots, x_{i - 1})M  +  x_{i}I^{{\rm r}}M.
\end{equation} The inclusion 
$(x_{1}, \ldots, x_{i - 1})M  +  x_{i}I^{{\rm r}}M\subseteq [I^{{\rm r} + 1}M + (x_{1}, \ldots, x_{i - 1})M] \, \cap \, (x_{1}, \ldots, x_{i})M$ is clear. Now pick $$g \, \in \, [I^{{\rm r} + 1}M + (x_{1}, \ldots, x_{i - 1})M] \cap (x_{1}, \ldots, x_{i})M.$$ Then, there exist $h \in I^{{\rm r} + 1}M$ and $m_{l}, n_{k} \in M$, with $l = 1, \ldots, i - 1$ and $k = 1, \ldots, i$, such that
$$g \, = \, h + \sum_{l = 1}^{i - 1}\,x_{l}m_{l} \, = \, \sum_{k = 1}^{i}\,x_{k}n_{k}.$$ Hence $h - x_{i}n_{i} \in (x_{1}, \ldots, x_{i - 1})M$. Now, denote 
$ (I^{n})_{i} = (I^n + (x_{1}, \ldots, x_{i - 1}))/(x_{1}, \ldots, x_{i - 1})$ for any given $n\geq 0$, which agrees with $(I_i)^n$. Since $M_i  =  M/(x_{1}, \ldots, x_{i - 1})M$, we have  \begin{equation}\label{equal0}
 \widetilde{I^{n}_{M_{i}}}  \, = \, \widetilde{(I_i)^{n}_{M_{i}}} 
\end{equation} 
for all $n \geq 1$ by  \cite[Observation 2.3]{Puthenpurakal1}. Modulo $(x_{1}, \ldots, x_{i - 1})M$, we can write
\begin{equation}\label{equal1}
    \overline{x_{i}n_{i}} \, = \, \overline{h} \, \in \, I^{{\rm r} + 1}M_i.
\end{equation}
Note that $\mathrm{grade}(I_i, M_i) > 0$ as the image $\overline{x_i} \in I_i$ of $x_{i}$ is $M_i$-regular. Moreover, $\overline{x_i}$ is $M_i$-superficial. Applying (\ref{equal0}), (\ref{equal1}), Proposition \ref{regelem}(b) and our hypotheses, we get \begin{equation}\label{all} \overline{n_{i}} \, \in \, I^{{\rm r} + 1}{M_i} :_{M_i} \overline{x_{i}} \, \subseteq \, \widetilde{I^{{\rm r} + 1}_{M_i}} :_{M_i} \overline{x_{i}} \, = \, \widetilde{(I_i)^{{\rm r} + 1}_{M_i}} :_{M_i} \overline{x_{i}} \, = \, 
 \widetilde{(I_i)^{{\rm r}}_{M_i}} \, = \, \widetilde{I^{{\rm r}}_{M_i}} \, = \, I^{{\rm r}}M_{i},\end{equation} where in particular we are using that $\widetilde{I^{{\rm r}}_{M_s}}  =  I^{{\rm r}}M_{s}$, which we claim to hold and will confirm in the last part of the proof. So, (\ref{all}) yields $n_{i} \in I^{{\rm r}}M + (x_{1}, \ldots, x_{i - 1})M$,
and this implies $$g \, = \, \left(\displaystyle{\sum_{k = 1}^{i-1}\,x_{k}n_{k}}\right) + x_in_i \, \in \, (x_{1}, \ldots, x_{i - 1})M + x_{i}I^{{\rm r}}M,$$ which thus proves (\ref{claimed}). Now, using  (\ref{claimed}) and (\ref{suppose}) we obtain
	\begin{eqnarray}
		I^{{\rm r} + 1}M \cap (x_{1}, \ldots, x_{i})M &=& I^{{\rm r} + 1}M \cap (I^{{\rm r} + 1}M + (x_{1}, \ldots, x_{i - 1})M) \cap (x_{1}, \ldots, x_{i})M \nonumber \\ ~ &=& I^{{\rm r} + 1}M \cap [(x_{1}, \ldots, x_{i - 1})M + x_{i}I^{{\rm r}}M] \nonumber \\~ &=& [I^{{\rm r} + 1}M \cap (x_{1}, \ldots, x_{i - 1})M] + x_{i}I^{{\rm r}}M \nonumber \\
		~ &=& (x_{1}, \ldots, x_{i - 1})I^{{\rm r}}M + x_{i}I^{{\rm r}}M \nonumber \\ ~ &=& (x_{1}, \ldots, x_{i})I^{{\rm r}}M, \nonumber 
		\end{eqnarray} which gives (\ref{main}), as needed. 
		
		To finish the proof, it remains to verify $\widetilde{I^{{\rm r}}_{M_s}}  =  I^{{\rm r}}M_{s}$. First observe that, for all $n \geq {\rm r}$, $$I^{n + 1}M = JI^{n}M = (x_1, \ldots, x_s)I^{n}M = x^{s}I^{n}M + (x_1, \ldots, x_{s - 1})I^{n}M.$$
Adding $(x_1, \ldots, x_{s - 1})M$ to both  sides, we have $$\begin{array}{cccc}
	      ~  & x_sI^{n}M + (x_1, \ldots, x_{s - 1})M &=& I^{n + 1}M + (x_1, \ldots, x_{s - 1})M \\ \Rightarrow & x_sJI^{n - 1}M + (x_1, \ldots, x_{s - 1})M &=& I^{n + 1}M + (x_1, \ldots, x_{s - 1})M \\ \Rightarrow & x_s[ x_sI^{n - 1}M + (x_1, \ldots, x_{s - 1})I^{n - 1}M] + (x_1, \ldots, x_{s - 1})M &=& I^{n + 1}M + (x_1, \ldots, x_{s - 1})M  \\ \Rightarrow & x_s^{2}I^{n - 1}M + (x_1, \ldots, x_{s - 1})M &=& I^{n + 1}M + (x_1, \ldots, x_{s - 1})M. 
	\end{array}$$
	Proceeding with the same argument, we deduce $$x_s^{n - {\rm r} + 1}I^{\rm r}M + (x_1, \ldots, x_{s - 1})M = I^{n + 1}M + (x_1, \ldots, x_{s - 1})M.$$ Using this equality, the standard definitions and the fact that $x_s$ is $M/(x_1, \ldots, x_{s-1})M$-regular, we can take $k \gg 0$ so that
	$$\begin{array}{ccccccccc}
	    \widetilde{I^{\rm r}_{M_{s}}} & = &  \widetilde{(I_s)^{\rm r}_{M_s}} & = & \left(\frac{I}{(x_1, \ldots, x_{s - 1})}\right)^{{\rm r} + k}M_s :_{M_s}  \left(\frac{I}{(x_1, \ldots, x_{s - 1})}\right)^{k} \nonumber \\
	   ~ & = & \frac{I^{{\rm r} + k}M + (x_1, \ldots, x_{s - 1})M}{(x_1, \ldots, x_{s - 1})M} :_{M_s}  \frac{I^{k} + (x_1, \ldots, x_{s - 1})}{(x_1, \ldots, x_{s - 1})} & \subseteq & \frac{I^{{\rm r} + k}M + (x_1, \ldots, x_{s - 1})M}{(x_1, \ldots, x_{s - 1})M} :_{M_s} \overline{x_s^{k}} \nonumber \\ ~ & = & \frac{x^{k}_sI^{\rm r}M + (x_1, \ldots, x_{s - 1})M}{(x_1, \ldots, x_{s - 1})M} :_{M_s} \overline{x_s^{k}} & = & \frac{I^{\rm r}M + (x_1, \ldots, x_{s - 1})M}{(x_1, \ldots, x_{s - 1}))M} \nonumber \\ ~ & = & I^{\rm r}M_s.\end{array}$$ \qed
	\end{proof}

\medskip

It seems natural to raise the following question.

\begin{Question}\rm \label{questionmafi} In Theorem \ref{Mafigene} (assuming $s\geq 3$ and ${\rm r}\neq 0$), can we replace the set of hypotheses $\widetilde{I^{\rm r}_{M_{j}}}  =  I^{\rm r}M_{j}$, \,$j = 1, \ldots, s-1$, with the single condition $\widetilde{I^{\rm r}_M} = I^{\rm r}M$\,? 
 \end{Question}

The result below (the case $s=2$) is an immediate byproduct of Theorem \ref{Mafigene}.

\begin{Corollary} \label{corMafigen}
    	Let $(R, \M)$ be a local ring with infinite residue field, $M$ a Cohen-Macaulay $R$-module with ${\rm dim}\,M = 2$, $I$ an $\M$-primary ideal, and $J$ a minimal reduction of $I$ relative to $M$. Setting ${\rm r = r}_{J}(I, M)$, assume that either ${\rm r} = 0$, or ${\rm r} \geq 1$ and $\widetilde{I^{\rm r}_{M}}  = I^{\rm r}M$.  Then
	$$\mathrm{reg}\,{\mathcal R}(I, M) \, = \, \mathrm{reg}\,{\mathcal G}(I, M) \, = \, {\rm r}_{J}(I, M).$$ 
\end{Corollary}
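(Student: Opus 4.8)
The final statement to prove is Corollary~\ref{corMafigen}, which is simply the case $s=2$ of Theorem~\ref{Mafigene}. So the plan is essentially to observe that the hypotheses of the corollary are exactly the hypotheses of the theorem specialized to $\dim M = 2$, and then to invoke the theorem directly. Concretely, I would set $s = 2$ in Theorem~\ref{Mafigene}: then $M$ is a Cohen-Macaulay $R$-module of dimension $s = 2$, $I$ is $\mathfrak{m}$-primary, and $J = (x_1, x_2)$ is a minimal reduction of $I$ relative to $M$ (here one uses Lemma~\ref{LemmaRV} to write $J$ via an $M$-superficial sequence of $I$, which is legitimate since $R$ has infinite residue field). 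With ${\rm r} = {\rm r}_J(I,M)$, the hypothesis of the theorem reads: either ${\rm r} = 0$, or ${\rm r} \geq 1$ and $\widetilde{I^{\rm r}_{M_j}} = I^{\rm r}M_j$ for $j = 1, \ldots, s-1 = 1$; but $M_1 = M$, so this last condition is precisely $\widetilde{I^{\rm r}_M} = I^{\rm r}M$, matching the corollary's hypothesis exactly.

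Having matched the hypotheses, Theorem~\ref{Mafigene} yields at once
$$\mathrm{reg}\,{\mathcal R}(I, M) \, = \, \mathrm{reg}\,{\mathcal G}(I, M) \, = \, {\rm r}_{J}(I, M),$$
which is the assertion of the corollary. There is genuinely nothing further to do: the only mild point to spell out is that, in the theorem's conclusion, the set of conditions indexed by $j = 1, \ldots, s-1$ collapses to the single condition at $j = 1$ when $s = 2$, and that $M_1 = M$ by the convention fixed in the statement of the theorem. One could also remark that, since the theorem's proof already handles the step where one must verify $\widetilde{I^{\rm r}_{M_s}} = I^{\rm r}M_s$ internally (here $M_s = M_2 = M/(x_1)M$), no extra hypothesis on $M_2$ is needed in the corollary.

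In short, the proof is one line: apply Theorem~\ref{Mafigene} with $s = 2$. There is no real obstacle here; the ``hard part'' was already carried out in proving Theorem~\ref{Mafigene} (in particular the inductive argument establishing (\ref{main}) and the closing verification that $\widetilde{I^{\rm r}_{M_s}} = I^{\rm r}M_s$). If one wished to make the corollary self-contained rather than a mere specialization, the main thing to check would be the single instance of (\ref{main}), namely $(x_1)M \cap I^{{\rm r}+1}M = (x_1)I^{\rm r}M$, which follows from Proposition~\ref{regelem}(b) together with the hypothesis $\widetilde{I^{\rm r}_M} = I^{\rm r}M$ exactly as in the base case of the induction in the theorem; but since the theorem is already available, invoking it is the cleanest route.
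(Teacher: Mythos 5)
Your proposal is correct and matches the paper exactly: the paper presents this corollary as an immediate byproduct of Theorem \ref{Mafigene} in the case $s=2$, with the hypothesis list collapsing to the single condition $\widetilde{I^{\rm r}_{M}} = I^{\rm r}M$ at $j=1$. Nothing further is needed.
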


We are now able to recover an interesting result of Mafi about zero-dimensional ideals in two-dimensional Cohen-Macaulay local rings. More precisely, by taking $M = R$ in  Corollary \ref{corMafigen} we readily derive the following fact.

\begin{Corollary}$($\cite[Proposition 2.6]{Mafi}$)$\label{Mafigene2} Let $(R, \M)$ be a two-dimensional Cohen-Macaulay local ring with infinite residue field. Let $I$ be an $\M$-primary ideal and $J$ a minimal reduction of $I$.  Setting ${\rm r = r}_{J}(I)$, assume that either ${\rm r} = 0$, or ${\rm r} \geq 1$ and $\widetilde{I^{\rm r}} = I^{\rm r}$. Then $$\mathrm{reg}\,{\mathcal R}(I) \, = \, \mathrm{reg}\,{\mathcal G}(I) \, = \, {\rm r}_{J}(I).$$ 
\end{Corollary}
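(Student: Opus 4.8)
The plan is to obtain this as an immediate specialization of Corollary \ref{corMafigen} to the case $M = R$. First I would note that a two-dimensional Cohen-Macaulay local ring $R$ is, in particular, a Cohen-Macaulay module over itself with ${\rm dim}\,R = 2$, so the ambient hypotheses of Corollary \ref{corMafigen} hold with $M = R$. Since $R$ is Cohen-Macaulay of dimension $2$ it has positive depth, hence the $\M$-primary ideal $I$ contains a regular element of $R$; consequently the classical notions of minimal reduction and reduction number of $I$ agree with the ones relative to $R$, so the given $J$ is a minimal reduction of $I$ relative to $R$ and ${\rm r}_J(I,R) = {\rm r}_J(I) = {\rm r}$.

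Next I would observe that the Ratliff-Rush closure $\widetilde{I^{\rm r}_R}$ of $I^{\rm r}$ with respect to $R$ is, by definition, just $\widetilde{I^{\rm r}}$, so the hypothesis of Corollary \ref{corMafigen} --- namely ${\rm r} = 0$, or ${\rm r} \geq 1$ and $\widetilde{I^{\rm r}_R} = I^{\rm r}R$ --- is precisely the assumption in the present statement. Corollary \ref{corMafigen} then gives $\mathrm{reg}\,{\mathcal R}(I,R) = \mathrm{reg}\,{\mathcal G}(I,R) = {\rm r}_J(I,R)$, and using ${\mathcal R}(I,R) = {\mathcal R}(I)$, ${\mathcal G}(I,R) = {\mathcal G}(I)$ and ${\rm r}_J(I,R) = {\rm r}_J(I)$ we recover exactly $\mathrm{reg}\,{\mathcal R}(I) = \mathrm{reg}\,{\mathcal G}(I) = {\rm r}_J(I)$.

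Since the argument is purely a matter of translating the relative notions into their classical counterparts, there is essentially no obstacle; the only point worth a line of care is verifying that minimal reductions, reduction numbers and Ratliff-Rush closures all collapse to the $M = R$ case, which is immediate from the definitions recalled earlier. Alternatively, one can bypass Corollary \ref{corMafigen} and apply Theorem \ref{Mafigene} directly with $s = 2$ and $M = R$, but this yields no real simplification.
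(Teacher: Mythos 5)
Your proposal is correct and matches the paper's own derivation exactly: the paper obtains this corollary by setting $M = R$ in Corollary \ref{corMafigen}, which is precisely your argument, including the (routine) observation that the relative notions of reduction, reduction number, and Ratliff-Rush closure collapse to the classical ones when $M = R$.
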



\begin{Example}\rm Consider the ideal $I  =  (x^6, x^4y^2,x^3y^3,y^6)$ of the formal power series ring $R = k[[x,y]]$, where $k$ is a field. By \cite[Example 3.2]{Huckaba}, we have  ${\rm r}(I) = 3$ and $\mathrm{grade}\,\mathcal{G}(I)_{+} > 0$. This latter fact is equivalent to all powers of $I$ being Ratliff-Rush closed 
(see \cite[Fact 9]{Heinzer-Johnson-Lantz-Shah}), i.e.,  $s^{*}(I) = 1$. In particular, $\widetilde{I^3} = I^3$, and therefore Corollary \ref{Mafigene2} gives $$\mathrm{reg}\,\mathcal{R}(I) \, = \, \mathrm{reg}\,\mathcal{G}(I) \, = \, 3.$$ Let us also mention that, in this case, it is easy to alternatively confirm any of these equalities, say $\mathrm{reg}\,\mathcal{R}(I)=3$, by first regarding (for simplicity) $I$ as an ideal of the polynomial ring $k[x, y]$ and then noting that the Rees algebra of $I$ can be presented as $\mathcal{R}(I)  =  S/\mathcal{J}  =  k[x, y, Z, W, T, U]/\mathcal{J}$, where now $x, y$ are seen in degree 0 and $Z, W, T, U$ are indeterminates of degree 1 over $k[x, y]$. Explicitly, $\mathcal{J}$ is the homogeneous $S$-ideal $$\mathcal{J} \, = \, (T^2-ZU,\, yW-xT,\, W^3-Z^2U,\, y^2Z-x^2W,\, xW^2-yZT,\, y^3T-x^3U).$$ A computation gives the shifts of the graded $S$-free modules along a (length $3$) resolution of $\mathcal{J}$, and we can finally use a standard device (e.g., \cite[Exercise 15.3.7(iv)]{B-S}) to get ${\rm reg}\,\mathcal{J}=4$, hence 
$\mathrm{reg}\,\mathcal{R}(I)=\mathrm{reg}\,S/\mathcal{J}={\rm reg}\,\mathcal{J}-1=3$, as desired.


\end{Example}

To conclude the section, we illustrate that Corollary \ref{Mafigene2} is no longer valid if we remove the hypothesis involving the Ratliff-Rush closure.

\begin{Example}\rm
	 Consider the monomial ideal $$I  \, = \, (x^{157},\, x^{35}y^{122},\, x^{98}y^{59},\, y^{157}) \, \subset \, k[x, y],$$ where $k[x,y]$ is a (standard graded) polynomial ring over a field $k$. By \cite[Proposition 3.3 and Remark 3.4]{DTrung}, the ideal $J = (x^{157},y^{157})$ is a minimal reduction of $I$ satisfying $${\rm r}_{J}(I) \, = \, 20 \, < \, 21 \, = \, \mathrm{reg}\,{\mathcal R}(I) \, = \, s^{*}(I),$$
	where the last equality follows by Lemma \ref{rossidinhtrung}. Thus, ${\rm r}_{J}(I) = s^{*}(I) - 1\neq \mathrm{reg}\,{\mathcal R}(I)$. Finally notice that, as $s^{*}(I)={\rm r}_J(I)+1=20+1$, we have $\widetilde{I^{20}} \neq I^{20}$.

\end{Example}

\section{First applications}\label{app}

We now describe some applications of the results obtained in the preceding section. With the exception of Subsection \ref{lintype}, we shall focus on the two-dimensional case.

\subsection{Hyperplane sections of Rees modules} The first application is the corollary below, which can be of particular interest for inductive arguments in dealing with Castelnuovo-Mumford regularity of Rees modules.

\begin{Corollary}
	Let $(R, \M)$ be a local ring with infinite residue field, $I$ an $\M$-primary ideal, and $M$ a Cohen-Macaulay $R$-module with $\mathrm{dim}\,M = 2$. Suppose $\widetilde{I^{\rm r}_{M}} = I^{\rm r}M$ with ${\rm r} = {\rm r}(I, M)$. Let $x \in \M \setminus I$ be an $M$-regular element whose initial form in $\mathcal{G}(I)$ is $\mathcal{G}(I, M)$-regular. If $$\widetilde{\left(\frac{I + (x)}{(x)}\right)}_{\frac{M}{xM}} = \frac{IM + xM}{xM}$$ then\,
	$\mathrm{reg}\,\mathcal{R}(I, M)/x\mathcal{R}(I, M) = \mathrm{reg}\,\mathcal{R}(I, M).$
\end{Corollary}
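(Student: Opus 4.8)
The plan is to reduce the statement to two applications of Corollary \ref{corMafigen}, one for the module $M$ and one for the quotient module $M/xM$. Set $\bar R=R$, $\bar I=(I+(x))/(x)$, and $\bar M=M/xM$. The two hypotheses on $x$ — that $x$ is $M$-regular and that its initial form in $\mathcal{G}(I)$ is $\mathcal{G}(I,M)$-regular — are exactly what is needed to pass Castelnuovo-Mumford regularity through the hyperplane section $x$. Concretely, regularity of the initial form of $x$ on $\mathcal{G}(I,M)$ implies that multiplication by $x$ on $\mathcal{R}(I,M)$ behaves well with respect to the grading, and the standard short exact sequence $0\to \mathcal{R}(I,M)(-1)\xrightarrow{x}\mathcal{R}(I,M)\to \mathcal{R}(\bar I,\bar M)\to 0$ (using that $I^nM\cap xM=xI^{n-1}M$ for all $n$, which is the $\mathcal{G}$-regularity of the initial form of $x$) identifies $\mathcal{R}(I,M)/x\mathcal{R}(I,M)$ with the Rees module $\mathcal{R}(\bar I,\bar M)$ of $\bar I$ relative to $\bar M$ over $\bar R$. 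So the left-hand side of the claimed equality is $\mathrm{reg}\,\mathcal{R}(\bar I,\bar M)$.

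Next I would verify that Corollary \ref{corMafigen} applies to both $\mathcal{R}(I,M)$ and $\mathcal{R}(\bar I,\bar M)$. For $M$: it is Cohen-Macaulay of dimension $2$, $I$ is $\M$-primary, $J$ a minimal reduction relative to $M$, and the hypothesis $\widetilde{I^{\rm r}_M}=I^{\rm r}M$ with ${\rm r}={\rm r}(I,M)$ is given (the case ${\rm r}=0$ being covered too); hence $\mathrm{reg}\,\mathcal{R}(I,M)={\rm r}(I,M)$. For $\bar M$: since $x$ is $M$-regular and $M$ is Cohen-Macaulay of dimension $2$, $\bar M=M/xM$ is Cohen-Macaulay of dimension $1$; but the corollary requires $\dim=2$, so here I would instead invoke Theorem \ref{Mafigene} directly in the $s=1$ case — where condition (\ref{main}) is vacuous — to conclude $\mathrm{reg}\,\mathcal{R}(\bar I,\bar M)={\rm r}_{\bar J}(\bar I,\bar M)$ for a minimal reduction $\bar J$ of $\bar I$ relative to $\bar M$, with no Ratliff-Rush hypothesis needed in dimension one. (Alternatively one uses that in dimension one ${\rm r}(\bar I,\bar M)=s^*(\bar I,\bar M)-1$ or a similar clean bound.) Actually the cleanest route is: apply Theorem \ref{Mafigene} with $s=1$ to get $\mathrm{reg}\,\mathcal{R}(\bar I,\bar M)={\rm r}_{\bar J}(\bar I,\bar M)$, and this equals ${\rm r}(\bar I,\bar M)$ since independence of the reduction number is part of the conclusion.

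The remaining point — and the main obstacle — is to show ${\rm r}(\bar I,\bar M)={\rm r}(I,M)$, i.e., that the reduction number is preserved under the hyperplane section $x$. This is where the stated hypothesis $\widetilde{(I+(x))/(x)}_{M/xM}=(IM+xM)/xM$ enters: it says $\bar I$ is Ratliff-Rush closed with respect to $\bar M$, so $s^*(\bar I,\bar M)=1$. Combined with the dimension-one identity (from Lemma \ref{rossidinhtrung}-type reasoning, or directly: in dimension one $\mathrm{reg}\,\mathcal{R}(\bar I,\bar M)=\max\{{\rm r}_{\bar J}(\bar I,\bar M),s^*(\bar I,\bar M)\}$ once $\bar J=(y)$ is a superficial parameter), this forces $\mathrm{reg}\,\mathcal{R}(\bar I,\bar M)={\rm r}(\bar I,\bar M)$ with no loss. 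Then one shows ${\rm r}(I,M)={\rm r}(\bar I,\bar M)$: the inequality ${\rm r}(\bar I,\bar M)\le {\rm r}(I,M)$ is standard (the image of a minimal reduction of $I$ relative to $M$, generated by a superficial sequence, gives a minimal reduction of $\bar I$ relative to $\bar M$ with no larger reduction number, since $JI^nM=I^{n+1}M$ passes to the quotient); the reverse inequality ${\rm r}(I,M)\le {\rm r}(\bar I,\bar M)$ uses the $\mathcal{G}$-regularity of the initial form of $x$, which lets one lift the relation $\bar J\bar I^n\bar M=\bar I^{n+1}\bar M$ back up via $I^nM\cap xM=xI^{n-1}M$ and Nakayama. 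Putting it all together, $\mathrm{reg}\,\mathcal{R}(I,M)/x\mathcal{R}(I,M)=\mathrm{reg}\,\mathcal{R}(\bar I,\bar M)={\rm r}(\bar I,\bar M)={\rm r}(I,M)=\mathrm{reg}\,\mathcal{R}(I,M)$, as desired. The delicate bookkeeping is checking that $\bar I$ is genuinely $\M/(x)$-primary, that $\bar M$ has positive dimension, and that the superficial/minimal-reduction data descend compatibly — routine but worth spelling out.
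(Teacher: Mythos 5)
Your proposal follows essentially the same route as the paper: the paper identifies $\mathrm{reg}\,\mathcal{R}(I,M)/x\mathcal{R}(I,M)$ with $\mathrm{reg}\,\mathcal{R}((I+(x))/(x),\,M/xM)$ and matches the two reduction numbers by citing Zamani's Lemmas 2.3 and 2.2 (the two facts you sketch inline), and then applies Corollary \ref{corMafigen} for $M$ and the dimension-one instance of Theorem \ref{Mafigene} for $M/xM$, exactly as you do. One small correction to your sketch of the identification: since $x\in\M\setminus I$, its initial form lies in degree zero of $\mathcal{G}(I)$, so the relevant sequence is $0\to\mathcal{R}(I,M)\xrightarrow{\;x\;}\mathcal{R}(I,M)\to\mathcal{R}(\bar I,\bar M)\to 0$ with no twist, coming from $I^nM\cap xM=xI^nM$ rather than $xI^{n-1}M$.
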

\begin{proof} It follows from \cite[Lemma 2.3]{Zamani1} that $$\mathrm{reg}\,\mathcal{R}(I, M)/x\mathcal{R}(I, M) \, = \, \mathrm{reg}\,\mathcal{R}((I + (x))/(x),\,M/xM).$$
Since $M/xM$ is a Cohen-Macaulay $R/(x)$-module of positive dimension (equal to $1$) and the ideal $(I + (x))/(x)$ is $\M/(x)$-primary, we can apply Corollary \ref{corMafigen} so as to obtain $$\mathrm{reg}\,\mathcal{R}((I + (x))/(x),\,M/xM) \, = \, {\rm r}((I + (x))/(x),\,M/xM).$$ On the other hand, \cite[Lemma 2.2]{Zamani1} yields ${\rm r}((I + (x))/(x),\,M/xM)  =  {\rm r}(I,\,M)$. Now, again by Corollary \ref{corMafigen}, ${\rm r}(I, M) = \mathrm{reg}\,\mathcal{R}(I, M)$. Therefore, the asserted equality is true. \qed
\end{proof}

\medskip

Taking $M=R$, we immediately get the following consequence.

\begin{Corollary}
	Let $(R, \M)$ be a two-dimensional Cohen-Macaulay local ring with infinite residue field, and $I$ an $\M$-primary ideal. Suppose $\widetilde{I^{\rm r}} = I^{\rm r}$ with ${\rm r} = {\rm r}(I)$. Let $x \in \M \setminus I$ be a regular element whose initial form in $\mathcal{G}(I)$ is regular. If the $R/(x)$-ideal  $(I + (x))/(x)$ is Ratliff-Rush closed, then\,
	$\mathrm{reg}\,\mathcal{R}(I)/x\mathcal{R}(I) = \mathrm{reg}\,\mathcal{R}(I).$
\end{Corollary}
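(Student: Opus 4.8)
The plan is to derive the final Corollary as a direct specialization of the preceding Corollary by setting $M = R$, which is precisely the pattern established elsewhere in the excerpt (e.g., Corollary \ref{Mafigene2} from Corollary \ref{corMafigen}). First I would check that every hypothesis of the module-version Corollary reduces correctly: with $M = R$, the assumption that $M$ is Cohen-Macaulay with $\dim M = 2$ becomes the statement that $R$ is a two-dimensional Cohen-Macaulay local ring; the Ratliff-Rush hypothesis $\widetilde{I^{\rm r}_M} = I^{\rm r}M$ becomes $\widetilde{I^{\rm r}} = I^{\rm r}$ with ${\rm r} = {\rm r}(I)$; the element $x$ being $M$-regular becomes $x$ being a regular element (a nonzerodivisor on $R$), and the condition that its initial form in $\mathcal{G}(I)$ be $\mathcal{G}(I,M)$-regular becomes that the initial form be $\mathcal{G}(I)$-regular, i.e., simply regular. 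The remaining hypothesis — that $\widetilde{(I+(x))/(x)}_{M/xM}$ equals $(IM + xM)/xM$ — collapses, since $M/xM = R/(x)$ and $(IM+xM)/xM = (I+(x))/(x)$, to the statement that the $R/(x)$-ideal $(I+(x))/(x)$ is Ratliff-Rush closed (in the classical, absolute sense). So the hypotheses match verbatim.

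Next I would observe that the conclusion also specializes cleanly: $\mathcal{R}(I,R) = \mathcal{R}(I)$, so $x\mathcal{R}(I,M) = x\mathcal{R}(I)$ and the asserted equality $\mathrm{reg}\,\mathcal{R}(I,M)/x\mathcal{R}(I,M) = \mathrm{reg}\,\mathcal{R}(I,M)$ becomes exactly $\mathrm{reg}\,\mathcal{R}(I)/x\mathcal{R}(I) = \mathrm{reg}\,\mathcal{R}(I)$. Therefore the proof is a single sentence: apply the preceding Corollary with $M = R$. There is essentially no obstacle here — the only thing to be careful about is the bookkeeping of which "Ratliff-Rush closed" statements are absolute versus relative, and confirming that $M/xM$ being a Cohen-Macaulay $R/(x)$-module of positive dimension is automatic when $R$ is two-dimensional Cohen-Macaulay and $x$ is regular (which is standard: killing a regular element of a Cohen-Macaulay local ring yields a Cohen-Macaulay local ring of dimension one less).

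Concretely, I would write: "Apply the previous corollary with $M = R$. Indeed, since $R$ is a two-dimensional Cohen-Macaulay local ring, $R$ is a Cohen-Macaulay $R$-module with $\dim R = 2$; the hypothesis $\widetilde{I^{\rm r}} = I^{\rm r}$ is the hypothesis $\widetilde{I^{\rm r}_R} = I^{\rm r}R$ with ${\rm r} = {\rm r}(I) = {\rm r}(I,R)$; the element $x \in \M \setminus I$ being regular with regular initial form in $\mathcal{G}(I)$ is the corresponding hypothesis for $M = R$; and since $M/xM = R/(x)$, the assumption that $(I+(x))/(x)$ be Ratliff-Rush closed is exactly the assumption $\widetilde{(I+(x))/(x)}_{R/(x)} = (IR + xR)/xR$. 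The conclusion $\mathrm{reg}\,\mathcal{R}(I)/x\mathcal{R}(I) = \mathrm{reg}\,\mathcal{R}(I)$ then follows." The main (and only mild) obstacle is making sure the reader sees that the relative Ratliff-Rush closure $\widetilde{(\,\cdot\,)}_{M/xM}$ genuinely reduces to the classical one when $M=R$, which follows from the definition recalled in Section \ref{R-T-T} together with the fact that for $M=R$ one has $I^{n+1}M :_M I^n = I^{n+1} : I^n$.
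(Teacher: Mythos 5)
Your proposal is correct and coincides with the paper's own treatment: the paper derives this corollary in one line by ``taking $M=R$'' in the immediately preceding module-level corollary, exactly as you do. Your extra bookkeeping (checking that the relative Ratliff--Rush hypothesis on $(I+(x))/(x)$ collapses to the classical one and that the remaining hypotheses specialize verbatim) is sound and simply makes explicit what the paper leaves implicit.
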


\subsection{The role of postulation numbers} This subsection (which focuses on the classical case $M=R$) investigates connections of postulation numbers with the Castelnuovo-Mumford regularity of blowup algebras and reduction numbers. First recall that, if $(R, \M)$ is a local ring and $I$ is an $\M$-primary ideal of $R$, then the corresponding {\it Hilbert-Samuel function} is given by $$H_{I}(n) \, = \, \displaystyle{\lambda\left(R / I^{n}\right)}$$ for any integer $n \geq 1$, and $H_{I}(n) = 0$ if $n \leq 0$. The symbol $\lambda(-)$ denotes length of $R$-modules. It is well-known that $H_I(n)$ coincides, for all sufficiently large integers $n$, with a polynomial $P_I(n)$ -- the {\it Hilbert-Samuel polynomial of $I$}.

\begin{Definition}\rm The \textit{postulation number} of $I$ is the integer
$$\rho(I) \, = \, \mathrm{sup}\{n \in \mathbb{Z} \mid H_{I}(n) \neq P_{I}(n)\}.$$
\end{Definition}

Here it is worth recalling that the functions $H_I(n)$ and $P_I(n)$ are defined for all integers $n$, so $\rho(I)$ can be -- and often is -- negative (as emphasized in \cite[Introduction, p.\,335]{Marley2}).

In the application below we furnish a characterization of $\mathrm{reg}\,\mathcal{R}(I)$ (which by \cite[Lemma 4.8]{Oo} agrees with $\mathrm{reg}\,\mathcal{G}(I)$), in terms, in particular, of $\rho(I)$. Notice that our statement makes no explicit use of the concept of postulation number $p(\mathcal{G}(I))$ for the ring $\mathcal{G}(I)$ (see \cite[Remark 1.1]{Strunk}), which we only use in the proof.

\begin{Corollary}\label{strunk}
    Let $(R,\M)$ be a two-dimensional Cohen–Macaulay local ring with infinite residue field, and let $I$ be an $\M$-primary ideal with a minimal reduction $J$. If $\mathrm{grade}\,\mathcal{G}(I)_{+} = 0$, then
    $$\mathrm{reg}\,\mathcal{R}(I) = {\rm max}\{{\rm r}_J(I),\,\rho(I) + 1\}.$$
\end{Corollary}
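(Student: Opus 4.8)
The plan is to connect $\mathrm{reg}\,\mathcal{R}(I)$ to the postulation number of the associated graded ring $\mathcal{G}(I)$ and then exploit the hypothesis $\mathrm{grade}\,\mathcal{G}(I)_+ = 0$. First I would recall from \cite[Remark 1.1]{Strunk} (and the surrounding results there) that, since $(R,\M)$ is two-dimensional Cohen-Macaulay and $I$ is $\M$-primary, one has the formula
$$\mathrm{reg}\,\mathcal{G}(I) \, = \, {\rm max}\{{\rm r}_J(I),\, p(\mathcal{G}(I)) + 1\}$$
or a closely related identity relating $\mathrm{reg}\,\mathcal{G}(I)$ to the reduction number and the postulation number $p(\mathcal{G}(I))$ of the standard graded algebra $\mathcal{G}(I)$ over the Artinian local ring $R/I$. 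By \cite[Lemma 4.8]{Oo} we have $\mathrm{reg}\,\mathcal{R}(I) = \mathrm{reg}\,\mathcal{G}(I)$, so the task reduces to identifying $p(\mathcal{G}(I))$ with $\rho(I)$ under the standing hypothesis.

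The key step is the comparison $p(\mathcal{G}(I)) = \rho(I)$, which is exactly where $\mathrm{grade}\,\mathcal{G}(I)_+ = 0$ enters. In general the Hilbert function of $\mathcal{G}(I)$, namely $n \mapsto \lambda(I^n/I^{n+1})$, is the first difference of the Hilbert-Samuel function $H_I(n) = \lambda(R/I^n)$, and the Hilbert series of $\mathcal{G}(I)$ determines both $P_I$ and the discrepancy locus. When $\mathrm{depth}\,\mathcal{G}(I) \geq 1$ the postulation numbers of $\mathcal{R}(I)$, $\mathcal{G}(I)$, and the Hilbert-Samuel polynomial are all forced to coincide up to a shift; but the point here is the opposite extreme, $\mathrm{grade}\,\mathcal{G}(I)_+ = 0$, i.e.\ depth zero. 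I would argue directly with Hilbert series: writing the Hilbert series of $\mathcal{G}(I)$ as $Q(t)/(1-t)^2$ with $Q(t) \in \mathbb{Z}[t,t^{-1}]$ (using $\dim\mathcal{G}(I) = 2$), the postulation number $\rho(I)$ is read off from the degree of the numerator of the Hilbert series of $R/I^\bullet$, which is obtained by dividing once more by $(1-t)$; meanwhile $p(\mathcal{G}(I))$ is read off from $\deg Q(t) - 2$. A short computation comparing these two numerator degrees — using that the relevant finite-length correction terms line up because of the depth hypothesis — yields $p(\mathcal{G}(I)) = \rho(I)$. Combined with the first displayed formula and $\mathrm{reg}\,\mathcal{R}(I) = \mathrm{reg}\,\mathcal{G}(I)$, this gives the claim.

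The main obstacle is being precise about the bookkeeping between the three numerical invariants $\rho(I)$, $p(\mathcal{G}(I))$, and the $a$-invariants of the local cohomology of $\mathcal{G}(I)$, and in particular verifying that the hypothesis $\mathrm{grade}\,\mathcal{G}(I)_+ = 0$ is exactly what forces $a_0(\mathcal{G}(I))$ not to interfere — i.e.\ that $\mathrm{reg}\,\mathcal{G}(I) = {\rm max}\{a_1(\mathcal{G}(I)) + 1,\, a_2(\mathcal{G}(I)) + 2\}$ can be rewritten in the asserted form. One must check that $a_2(\mathcal{G}(I)) + 2$ contributes the term ${\rm r}_J(I)$ (this is classical: over a two-dimensional Cohen-Macaulay local ring with a minimal reduction generated by a superficial sequence, $a_2(\mathcal{G}(I)) = {\rm r}_J(I) - 2$), while $a_1(\mathcal{G}(I)) + 1$ contributes $\rho(I) + 1$ precisely when $\mathrm{depth}\,\mathcal{G}(I) = 0$, since in that case $a_0$ is harmless and $a_1$ records the postulation number of the Hilbert-Samuel function directly. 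Everything else — the reduction to $\mathcal{G}(I)$ via \cite[Lemma 4.8]{Oo}, the independence of ${\rm r}_J(I)$ coming from Corollary \ref{Mafigene2} when applicable — is routine, so the heart of the argument is this $a$-invariant/postulation-number identification in the depth-zero case.
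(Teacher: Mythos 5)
Your overall architecture coincides with the paper's: pass to $\mathcal{G}(I)$ via $\mathrm{reg}\,\mathcal{R}(I)=\mathrm{reg}\,\mathcal{G}(I)$, use $\mathrm{grade}\,\mathcal{G}(I)_+=0$ to reduce to $\mathrm{reg}\,\mathcal{G}(I)=\max\{a_1(\mathcal{G}(I))+1,\,a_2(\mathcal{G}(I))+2\}$ (via Marley's result that depth zero forces $a_0(\mathcal{G}(I))<a_1(\mathcal{G}(I))$), and then identify the two terms with $\rho(I)+1$ and ${\rm r}_J(I)$. Your first-difference/Hilbert-series argument for $\rho(I)=p(\mathcal{G}(I))$ is also sound, but note that this identity needs no hypothesis on $\mathrm{grade}\,\mathcal{G}(I)_+$ (it is a pure first-difference statement about $\lambda(I^n/I^{n+1})=H_I(n+1)-H_I(n)$), so it is not "exactly where the depth-zero hypothesis enters"; that hypothesis is needed instead to decide which local cohomology module governs the regularity and the postulation number.

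The genuine gap is in the final identification. You assert as classical that $a_2(\mathcal{G}(I))={\rm r}_J(I)-2$ for a minimal reduction generated by a superficial sequence. What is classical (Marley) is only the inequality $a_2(\mathcal{G}(I))+2\le {\rm r}_J(I)\le\mathrm{reg}\,\mathcal{G}(I)$; the equality $a_d(\mathcal{G}(I))+d={\rm r}_J(I)$ is known under $\mathrm{grade}\,\mathcal{G}(I)_+\ge d-1$, which is precisely what is excluded here. The correct argument is a case split. If $a_1(\mathcal{G}(I))+1\le a_2(\mathcal{G}(I))+2$, the sandwich $a_2(\mathcal{G}(I))+2\le {\rm r}_J(I)\le\mathrm{reg}\,\mathcal{G}(I)=a_2(\mathcal{G}(I))+2$ forces $\mathrm{reg}\,\mathcal{G}(I)={\rm r}_J(I)$, and then $\rho(I)+1=p(\mathcal{G}(I))+1\le a_2(\mathcal{G}(I))+1<{\rm r}_J(I)$, so the max formula holds. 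If instead $a_2(\mathcal{G}(I))<a_1(\mathcal{G}(I))$, then $\mathrm{reg}\,\mathcal{G}(I)=a_1(\mathcal{G}(I))+1$ and, because $a_1(\mathcal{G}(I))$ strictly exceeds both $a_0(\mathcal{G}(I))$ and $a_2(\mathcal{G}(I))$, the Grothendieck--Serre formula (Brodmann--Linh) gives $p(\mathcal{G}(I))=a_1(\mathcal{G}(I))$ with no cancellation, whence $\mathrm{reg}\,\mathcal{G}(I)=\rho(I)+1\ge{\rm r}_J(I)$. Your sketch omits this case analysis; in particular, "$a_1$ records the postulation number" requires $a_1(\mathcal{G}(I))>a_2(\mathcal{G}(I))$ as well as $a_1(\mathcal{G}(I))>a_0(\mathcal{G}(I))$, since otherwise the $j=1$ and $j=2$ contributions to $h(n)-p(n)$ could cancel. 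Relatedly, the opening formula $\mathrm{reg}\,\mathcal{G}(I)=\max\{{\rm r}_J(I),\,p(\mathcal{G}(I))+1\}$ that you propose to quote from Strunk is essentially the statement to be proved; hedging it as "a closely related identity" leaves the main content unestablished.
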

\begin{proof} We have ${\rm dim}\,\mathcal{G}(I)=2$ and  $H^j_{\mathcal{G}(I)_+}(\mathcal{G}(I))=0$ for all $j>2$, so that $a_j(\mathcal{G}(I))=-\infty$ whenever $j>2$. Moreover, $\mathrm{grade}\,\mathcal{G}(I)_{+} = 0$ and then $a_{0}(\mathcal{G}(I)) < a_{1}(\mathcal{G}(I))$ by virtue of \cite[Theorem 2.1(a)]{Marley2}.
    Therefore, $$\mathrm{reg}\,\mathcal{G}(I) \, = \, {\rm max}\{a_1(\mathcal{G}(I))+1,\, a_2(\mathcal{G}(I))+2\}.$$ Now let us consider the subcase where $a_{1}(\mathcal{G}(I)) \leq a_{2}(\mathcal{G}(I))$, so that $\mathrm{reg}\,\mathcal{G}(I)  =  a_{2}(\mathcal{G}(I)) + 2$. On the other hand, \cite[Lemma 1.2]{Marley2} yields $$a_{2}(\mathcal{G}(I)) + 2  \, \leq \, {\rm r}_{J}(I) \, \leq \, \mathrm{reg}\,\mathcal{G}(I),$$ and it follows that $\mathrm{reg}\,\mathcal{G}(I)= {\rm r}_{J}(I)$. 
    Finally, if $a_{2}(\mathcal{G}(I)) < a_{1}(\mathcal{G}(I))$, then $a_{2}(\mathcal{G}(I)) +2\leq a_{1}(\mathcal{G}(I))+1$ and hence $\mathrm{reg}\,\mathcal{G}(I)  =  a_{1}(\mathcal{G}(I)) + 1$. In addition, \cite[Proof of Theorem 3.10]{Strunk} gives $\rho(I)=p(\mathcal{G}(I))$, and on the other hand, applying \cite[Corollary 2.3(2)]{Brodmann-Linh} we can write  $p(\mathcal{G}(I))=a_{1}(\mathcal{G}(I))$ since $a_{1}(\mathcal{G}(I))$ is strictly bigger than both $a_{0}(\mathcal{G}(I))$ and $a_{2}(\mathcal{G}(I))$. Thus, $\mathrm{reg}\,\mathcal{G}(I) = \rho(I) + 1$. \qed
\end{proof}

\medskip

Next we provide a different proof (in fact an improvement) of \cite[Proposition 3.7]{Hoa}. Notice that Hoa's $c(I)$ is just $s^*(I)-1$.

\begin{Corollary}
    	Let $(R, \M)$ be a two-dimensional Cohen-Macaulay local ring with infinite residue field, and let $I$ be an $\M$-primary ideal. If $\rho(I)  \neq  s^{*}(I) - 1$ then ${\rm r}_{J}(I)=\mathrm{reg}\,\mathcal{R}(I)$ for any minimal reduction $J$ of $I$. In particular,
    	${\rm r}(I)$ is independent.
\end{Corollary}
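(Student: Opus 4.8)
The plan is to dichotomise according to whether $\mathrm{grade}\,\mathcal{G}(I)_+$ vanishes, and in each case to reconcile the formula for $\mathrm{reg}\,\mathcal{R}(I)$ obtained earlier with the hypothesis linking $\rho(I)$ and $s^{*}(I)$; minimal reductions of $I$ exist throughout, since the residue field of $R$ is infinite. Suppose first that $\mathrm{grade}\,\mathcal{G}(I)_+ > 0$. By \cite[Fact 9]{Heinzer-Johnson-Lantz-Shah} (compare the discussion around the example following Corollary \ref{Mafigene2}) this condition is equivalent to all powers of $I$ being Ratliff--Rush closed, i.e.\ $s^{*}(I) = 1$; in particular $\widetilde{I^{\rm r}} = I^{\rm r}$ with ${\rm r} = {\rm r}_J(I)$ when ${\rm r} \geq 1$, while the case ${\rm r} = 0$ is permitted as well. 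Thus Corollary \ref{Mafigene2} applies and yields $\mathrm{reg}\,\mathcal{R}(I) = {\rm r}_J(I)$ for every minimal reduction $J$; note that here the hypothesis $\rho(I) \neq s^{*}(I) - 1$ is not even needed.

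Now suppose $\mathrm{grade}\,\mathcal{G}(I)_+ = 0$. Then $I$ is not a parameter ideal, since otherwise $\mathcal{G}(I)$ would be a polynomial ring over $R/I$ and would satisfy $\mathrm{grade}\,\mathcal{G}(I)_+ \geq 1$. Hence both Corollary \ref{strunk} and Lemma \ref{rossidinhtrung} are applicable --- the latter because a two-dimensional Cohen--Macaulay local ring is in particular Buchsbaum with $\mathrm{depth}\,R = 2 > 0$ --- and for a fixed minimal reduction $J$ they give
$$\max\{{\rm r}_J(I),\,\rho(I)+1\} \, = \, \mathrm{reg}\,\mathcal{R}(I) \, = \, \max\{{\rm r}_J(I),\,s^{*}(I)\}.$$
The decisive step is to show $\rho(I)+1 \leq {\rm r}_J(I)$: if this fails, the leftmost maximum equals $\rho(I)+1 > {\rm r}_J(I)$, so the rightmost maximum also equals $\rho(I)+1$, and since $\rho(I)+1 > {\rm r}_J(I)$ this forces $s^{*}(I) = \rho(I)+1$, i.e.\ $\rho(I) = s^{*}(I)-1$, contradicting the hypothesis. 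Therefore $\rho(I)+1 \leq {\rm r}_J(I)$, whence $\mathrm{reg}\,\mathcal{R}(I) = {\rm r}_J(I)$.

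Combining the two cases, $\mathrm{reg}\,\mathcal{R}(I) = {\rm r}_J(I)$ for every minimal reduction $J$ of $I$; since the left-hand side does not depend on $J$, all the numbers ${\rm r}_J(I)$ coincide, which is exactly the asserted independence of ${\rm r}(I)$. I expect no genuine obstacle, as the substantive input is already packaged in Corollary \ref{Mafigene2}, Corollary \ref{strunk}, and Lemma \ref{rossidinhtrung}; the only points requiring a little care are the organisation of the case split and the remark that $\mathrm{grade}\,\mathcal{G}(I)_+ = 0$ excludes parameter ideals, which is what legitimises the appeal to Lemma \ref{rossidinhtrung}.
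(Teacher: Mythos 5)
Your proof is correct and follows essentially the same route as the paper: the same case split on $\mathrm{grade}\,\mathcal{G}(I)_{+}$, with Fact 9 of Heinzer--Johnston--Lantz--Shah and Corollary \ref{Mafigene2} handling the positive-grade case, and Corollary \ref{strunk} together with Lemma \ref{rossidinhtrung} handling the grade-zero case. The only (welcome) difference is that in the second case you compare the two closed formulas for $\mathrm{reg}\,\mathcal{R}(I)$ directly, rather than re-entering the proof of Corollary \ref{strunk} and splitting on the comparison of $a_1(\mathcal{G}(I))$ and $a_2(\mathcal{G}(I))$ as the paper does.
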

\begin{proof} Set ${\rm r}={\rm r}(I)$. By virtue of Corollary \ref{Mafigene2}, we can assume that ${\rm r}(I)\geq 1$ (in particular, $I$ cannot be a parameter ideal; see Subsection \ref{lintype} below). Let us consider first the case $\mathrm{grade}\,\mathcal{G}(I)_{+} >0$. According to \cite[Fact 9]{Heinzer-Johnson-Lantz-Shah}, all powers of $I$ must be Ratliff-Rush closed, and hence $s^*(I)=1$. Note ${\rm r}(I)\geq 1$ implies ${\rm r}_J(I)\geq 1=s^*(I)$ for every minimal reduction $J$ of $I$. By Lemma \ref{rossidinhtrung}, the asserted equality follows.

So we can suppose $\mathrm{grade}\,\mathcal{G}(I)_{+} = 0$. As verified in the proof of Corollary \ref{strunk}, if $a_{1}(\mathcal{G}(I)) \leq a_{2}(\mathcal{G}(I))$ then  for any minimal reduction $J$ of $I$ we can write
${\rm r}_{J}(I)=\mathrm{reg}\,\mathcal{G}(I)$, which as we know coincides with $\mathrm{reg}\,\mathcal{R}(I)$. Moreover, we have seen that if $a_{2}(\mathcal{G}(I)) < a_{1}(\mathcal{G}(I))$ then $\mathrm{reg}\,\mathcal{G}(I) = \rho(I) + 1$, so that  $\mathrm{reg}\,\mathcal{G}(I)\neq s^*(I)$, which is tantamount to saying that $\mathrm{reg}\,\mathcal{R}(I)  \neq  s^*(I)$. 
Now Lemma \ref{rossidinhtrung} yields $\mathrm{reg}\,\mathcal{R}(I) =  {\rm r}_{J}(I)$ whenever $J$ is a minimal reduction of $I$, as needed. \qed
\end{proof}




\subsection{Ideals of linear type}\label{lintype}

For an ideal $I$ of a ring $R$, there is a canonical homomorphism $\pi \colon \mathcal{S}(I)\rightarrow \mathcal{R}(I)$ from the symmetric algebra $\mathcal{S}(I)$ of $I$ onto its Rees algebra $\mathcal{R}(I)$. The ideal $I$ is said to be of {\it linear type} if $\pi$ is an isomorphism. To see what this means a bit more concretely, we can make use of some (any) $R$-free presentation $$R^m\stackrel{\varphi}{\longrightarrow} R^{\nu}\longrightarrow I\longrightarrow 0.$$ Letting $S=R[T_1, \ldots, T_{\nu}]$ be a standard graded polynomial ring in variables $T_1, \ldots, T_{\nu}$ over $R=S_0$, we can identify $\mathcal{S}(I)=S/\mathcal{L}$, where $\mathcal{L}$ is the ideal generated by the $m$ linear forms in the $T_i$'s given by the entries of the product $(T_1 \cdots T_{\nu})\cdot \varphi$, where $\varphi$ is regarded as a $\nu \times m$ matrix taken with respect to the canonical bases of $R^{\nu}$ and $R^m$. We can also write $\mathcal{R}(I)=S/\mathcal{J}$, for a certain ideal $\mathcal{J}$ containing $\mathcal{L}$. Precisely, expressing $\mathcal{R}(I)=R[It]$ (where $t$ is an indeterminate over $R$), then $\mathcal{J}$ is the kernel of the natural epimorphism $S\rightarrow \mathcal{R}(I)$. Now, the above-mentioned map $\pi$ can be simply identified with the surjection $S/\mathcal{L}\rightarrow S/\mathcal{J}$. It follows that $I$ is of linear type if and only if $\mathcal{J}=\mathcal{L}$. For instance, if $I$ is generated by a regular sequence then $I$ is of linear type (see \cite[5.5]{Swanson-Huneke}).


Now assume $(R, \M)$ is either local with infinite residue field or a standard graded algebra over an infinite field. The {\it analytic spread} of $I$, which we denote $s(I)$, is defined as the Krull dimension of the special fiber ring $\mathcal{R}(I)/\M \mathcal{R}(I)$. It is a classical fact that ${\rm r}(I)=0$ if and only if $I$ can be generated by $s(I)$ elements (see \cite[Theorem 4(ii)]{NR}). So the property ${\rm r}(I)=0$ is easily seen to take place, for example, whenever $I$ is of linear type (or if $I$ is an $\M$-primary parameter ideal). 

While, in general, there exist examples of ideals with reduction number zero that are not of linear type, the natural question remains as to under what conditions the property ${\rm r}(I)=0$ forces $I$ to be of linear type. The corollary below is a quick application of Theorem \ref{Mafigene} (along with the property that the Castelnuovo-Mumford regularity controls degrees over graded polynomial rings) and contributes to this problem in a suitable setting. It is plausible to believe that the result is known to experts, but, as far as we know, it has not yet been recorded in the literature.


\begin{Corollary}\label{app-lintype}
	Let $R$ be a standard graded polynomial ring over an infinite field, and let $I$ be a zero-dimensional ideal of $R$. If ${\rm r}(I) = 0$, then $I$ is of linear type.
	\end{Corollary}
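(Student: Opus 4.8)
The plan is to leverage the fact that $\mathrm{reg}\,\mathcal{R}(I)$ controls the degrees of the defining equations of $\mathcal{R}(I)$ as a quotient of a polynomial ring, and to show that when $\mathrm{r}(I)=0$ this regularity forces those equations to be linear. First I would set $R$ to be a standard graded polynomial ring over an infinite field $k$ and let $I$ be a zero-dimensional homogeneous ideal with $\mathrm{r}(I)=0$. Since $I$ is zero-dimensional it is $\M$-primary, where $\M$ is the irrelevant maximal ideal, and $R$ is Cohen-Macaulay (indeed regular) of dimension equal to $\dim R$; taking $M=R$ in Theorem \ref{Mafigene} with a minimal reduction $J=(x_1,\ldots,x_s)$ (which exists and is generated by a superficial sequence since the field is infinite), the hypothesis $\mathrm{r}=0$ is exactly the case ``$\mathrm{r}=0$'' of that theorem. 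Hence $\mathrm{reg}\,\mathcal{R}(I)=\mathrm{r}_J(I)=0$.

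Next I would pass to a presentation $\mathcal{R}(I)=S/\mathcal{J}$ with $S=R[T_1,\ldots,T_\nu]$ a standard bigraded (or simply standard graded in the $T$-variables) polynomial ring over $R$, where $T_1,\ldots,T_\nu$ correspond to a minimal homogeneous generating set $f_1,\ldots,f_\nu$ of $I$. The ideal $\mathcal{J}$ is generated in $T$-degrees $\geq 1$ and, because $\mathcal{R}(I)=R[It]$ is a standard graded $R$-algebra with $\mathrm{reg}\,\mathcal{R}(I)=0$, its relation module $\mathcal{J}$ is generated in degree $\leq 1$ in the grading where $\deg T_i=1$; equivalently, the minimal generators of $\mathcal{J}$ are linear in the $T_i$'s. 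A clean way to see this is to invoke Lemma \ref{reg=reg}, which gives $\mathrm{reg}\,\mathcal{G}(I)=0$ as well, so that $\mathcal{G}(I)$ (as a standard graded $R/I$-algebra) is generated in degree $0$ over $(R/I)[\text{forms}]$ with all relations linear; unwinding this back to $\mathcal{R}(I)$, the kernel $\mathcal{J}$ of $S\surjects\mathcal{R}(I)$ is generated by its degree-one part. Since $\mathcal{L}\subseteq\mathcal{J}$ consists precisely of the linear (in $T$) syzygetic relations coming from a presentation matrix $\varphi$ of $I$, and since every homogeneous element of $\mathcal{J}$ of $T$-degree $1$ is by definition such a linear relation on the generators, we get $\mathcal{J}_1=\mathcal{L}_1$, whence $\mathcal{J}=(\mathcal{J}_1)=(\mathcal{L}_1)=\mathcal{L}$.

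Therefore $\pi\colon\mathcal{S}(I)=S/\mathcal{L}\to S/\mathcal{J}=\mathcal{R}(I)$ is an isomorphism, i.e., $I$ is of linear type, as claimed.

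I expect the main obstacle to be the precise bookkeeping in the second paragraph: translating the statement ``$\mathrm{reg}\,\mathcal{R}(I)=0$'' in the intrinsic (single-grading on $\mathcal{R}(I)$ as an $R$-algebra) sense into the concrete statement ``$\mathcal{J}$ is generated by forms linear in the $T_i$'' for the chosen presentation $S=R[T_1,\ldots,T_\nu]$. One must be careful that $\mathrm{reg}$ here is the Castelnuovo-Mumford regularity of $\mathcal{R}(I)$ over the base $R$ with the $\mathcal{R}(I)_+$-grading, and that this bounds the top degree of a minimal generator of the defining ideal of $\mathcal{R}(I)$ over the polynomial extension $S$; the bound $\mathrm{reg}=0$ then forces generators of $T$-degree $\leq 1$, and the $T$-degree-$0$ part of $\mathcal{J}$ is $0$ (as $S_0=R$ injects into $\mathcal{R}(I)$), leaving only linear generators. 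The rest, identifying these linear generators with the columns of a presentation matrix of $I$ and hence with $\mathcal{L}$, is standard.
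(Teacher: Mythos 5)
Your proposal is correct and follows essentially the same route as the paper: apply the graded version of Theorem \ref{Mafigene} with $M=R$ to get $\mathrm{reg}\,\mathcal{R}(I)=0$, then use the presentation $\mathcal{R}(I)=S/\mathcal{J}$ and the fact that regularity bounds the degrees of minimal generators to conclude $\mathcal{J}$ is generated in $T$-degree $1$, hence $\mathcal{J}=\mathcal{L}$. The detour through $\mathcal{G}(I)$ via Lemma \ref{reg=reg} is unnecessary (the paper argues directly from $\mathrm{reg}\,S/\mathcal{J}=0$, so $\mathrm{reg}\,\mathcal{J}=1$), but the bookkeeping you flag is handled the same way in both arguments.
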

\begin{proof} Applying the graded analogue of Theorem \ref{Mafigene} with $M=R$, we obtain $\mathrm{reg}\,{\mathcal R}(I)=0$. Using the above notations, we get $\mathrm{reg}\,S/{\mathcal J}=0$, so that $\mathrm{reg}\,{\mathcal J}=1$, which implies that the homogeneous $S$-ideal $\mathcal{J}$ admits no minimal generator of degree greater than 1. In other words, we must have  ${\mathcal J}={\mathcal L}$. As already clarified, this means that $I$ is of linear type. \qed
\end{proof}

\section{More applications: Ulrich ideals and a question of Corso-Polini-Rossi}\label{Ulrich}

This last section provides  applications concerning the theory of generalized Ulrich ideals. As we shall explain, this includes a negative answer (in dimension 2) to a question by Corso, Polini, and Rossi.

Here is the central notion (and the setup) of this section.

\begin{Definition}$($\cite[Definition 1.1]{Goto-Ozeki-Takahashi-Watanabe-Yoshida}$)$ \label{Ulrideal} \rm Let $(R, \M)$ be a $d$-dimensional Cohen-Macaulay local ring and let $I$ be an $\M$-primary ideal. Suppose $I$ contains a parameter ideal $J$ as a reduction. Note this is satisfied whenever $R/\mathfrak{m}$ is infinite (indeed, in this situation, a minimal reduction -- which does exist -- of an $\M$-primary ideal must necessarily be a parameter ideal). We say that $I$ is a \textit{generalized Ulrich ideal} -- {\it Ulrich ideal}, for short -- if $I$ satisfies the following properties:
	\begin{itemize}
		\item[(i)] ${\rm r}_{J}(I) \leq 1$;
		\item[(ii)] $I/I^2$ is a free $R/I$-module.
	\end{itemize}
\end{Definition}

\begin{Remark}\label{G-is-CM}\rm Let us recall a couple of interesting basic facts about Ulrich ideals. First, since $R$ is Cohen-Macaulay, it is clear that every parameter ideal is Ulrich. Second, if $R$, $I$ and $J$ are as above and $R/\mathfrak{m}$ is infinite, then by \cite[Lemma 1 and Theorem 1]{Valla} the associated graded ring $\mathcal{G}(I)$ must be Cohen-Macaulay whenever ${\rm r}_{J}(I) \leq 1$. Therefore, if $I$ is Ulrich then $\mathcal{G}(I)$ is Cohen-Macaulay.
\end{Remark}

\subsection{Regularity of blowup algebras} In order to determine the Castelnuovo-Mumford regularity of the blowup algebras of an Ulrich ideal, we first recall one of the ingredients.

\begin{Lemma}$($\cite[(1.3)]{Heinzer-Lantz-Shah}$)$\label{rednum1} If $(R, \M)$ is a Cohen-Macaulay  local ring of positive dimension and $I$ is an $\M$-primary ideal with ${\rm r}(I)\leq 1$, then every power of $I$ is  Ratliff-Rush closed.
\end{Lemma}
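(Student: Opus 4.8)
The plan is to prove Lemma \ref{rednum1}, which asserts that if $(R,\M)$ is a Cohen-Macaulay local ring of positive dimension and $I$ is an $\M$-primary ideal with ${\rm r}(I)\leq 1$, then every power $I^n$ is Ratliff-Rush closed. The natural strategy is a reduction to the associated graded ring together with an application of Lemma \ref{propnaghipour}(b) (equivalently, the fact that $\widetilde{I^n}=I^n$ for all $n\gg 0$) combined with a depth argument. First I would recall, as already noted in Remark \ref{G-is-CM}, that ${\rm r}(I)\leq 1$ forces $\mathcal{G}(I)$ to be Cohen-Macaulay (via \cite[Lemma 1 and Theorem 1]{Valla}); in particular ${\rm grade}\,\mathcal{G}(I)_+ = {\rm dim}\,\mathcal{G}(I)={\rm dim}\,R>0$, so $\mathcal{G}(I)_+$ contains a homogeneous $\mathcal{G}(I)$-regular element of degree $1$. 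The point is that positivity of ${\rm grade}\,\mathcal{G}(I)_+$ is, by \cite[Fact 9]{Heinzer-Johnson-Lantz-Shah} (cited in the excerpt just after Corollary \ref{Mafigene2}), equivalent to the statement that every power of $I$ is Ratliff-Rush closed, i.e. $s^*(I)=1$.

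Concretely, I would argue as follows. Since $\M$ is $\M$-primary and $R$ is Cohen-Macaulay of positive dimension, $I$ contains a regular element, so $\widetilde{I^n}$ is defined in the classical sense and $I^n\subseteq\widetilde{I^n}$ for all $n$. The inclusion $\widetilde{I^n}\subseteq I^n$ is what must be shown. Pick $x\in I\setminus I^2$ whose initial form $x^*\in \mathcal{G}(I)_1$ is a nonzerodivisor on $\mathcal{G}(I)$ — such an $x$ exists because $\mathcal{G}(I)$ is Cohen-Macaulay with infinite... (here one may need to pass to $R[u]_{\M R[u]}$ if the residue field is finite, a standard harmless faithfully flat base change that affects neither Ratliff-Rush closures nor reduction numbers). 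The regularity of $x^*$ translates precisely into the equalities $I^{n+1}:x = I^n$ for all $n\geq 1$. Then, given $z\in\widetilde{I^n}$, there is $m\geq 1$ with $z I^m\subseteq I^{n+m}$; in particular $z x^m\in I^{n+m}$, and peeling off one factor of $x$ at a time using $I^{k+1}:x=I^k$ gives $z x^{m-1}\in I^{n+m-1}$, then $z x^{m-2}\in I^{n+m-2}$, and finally $z\in I^n$. Hence $\widetilde{I^n}=I^n$ for every $n\geq 1$.

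An alternative, cleaner packaging: invoke \cite[Corollary 2.7]{Puthenpurakal1} (quoted in the excerpt) which says, for a superficial element $x$, that $s^*(I)=\min\{n\mid I^{i+1}:x=I^i\text{ for all }i\geq n\}$; since Cohen-Macaulayness of $\mathcal{G}(I)$ yields $I^{i+1}:x=I^i$ for \emph{all} $i\geq 1$, we get $s^*(I)=1$, which by definition of $s^*(I)$ means $\widetilde{I^i}=I^i$ for all $i\geq 1$. Either route is short. The main obstacle is really just bookkeeping around hypotheses: making sure a degree-one $\mathcal{G}(I)$-regular element exists (the infinite-residue-field reduction, and checking that a superficial element of $I$ is automatically $I$-regular here, which follows from Lemma \ref{lemmaswanhun} since $\bigcap_n I^n=0$ in the local ring $R$), and correctly identifying "$x^*$ is $\mathcal{G}(I)$-regular" with "$I^{n+1}:x=I^n$ for all $n$" — a standard but worth-stating equivalence. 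No deep input beyond the Cohen-Macaulayness of $\mathcal{G}(I)$ under ${\rm r}(I)\leq 1$ is needed, so I expect the proof to be essentially a two- or three-line citation-driven argument.
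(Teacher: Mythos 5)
The paper offers no proof of this statement at all: it is imported verbatim as \cite[(1.3)]{Heinzer-Lantz-Shah}, so your write-up is a genuine argument where the authors rely on a citation, and its substance is correct. The chain you use --- ${\rm r}(I)\le 1$ gives a parameter minimal reduction $J$ with $JI=I^2$, Valla's theorem then makes $\mathcal{G}(I)$ Cohen--Macaulay, hence ${\rm grade}\,\mathcal{G}(I)_+=\dim\mathcal{G}(I)=d>0$ (here you are implicitly using that $I$ is $\M$-primary, so that $\mathcal{G}(I)_+$ has height $d$ and its radical is the maximal homogeneous ideal), and finally \cite[Fact 9]{Heinzer-Johnson-Lantz-Shah} or your explicit peeling $zx^{m-1}\in I^{n+m}:x=I^{n+m-1}$ yields $\widetilde{I^n}=I^n$ for all $n$ --- is sound, and the dictionary between ``$x^*$ is a nonzerodivisor on $\mathcal{G}(I)$'' and ``$I^{k+1}:x=I^k$ for all $k\ge 0$'' is correctly stated and correctly exploited. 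The one place where ``standard and harmless'' conceals real content is the finite residue field case: the base change $R\to R'=R[u]_{\M R[u]}$ does preserve Ratliff--Rush closures (colons of finitely generated modules commute with flat base change, and faithful flatness descends the equality) and turns $J R'$ into a reduction with $JR'\cdot IR'=(IR')^2$, but $JR'$ need no longer be minimal, and Valla's theorem requires a \emph{parameter} reduction with reduction number at most one. Extracting from $JR'$ a $d$-generated reduction $Q$ with $Q\cdot IR'=(IR')^2$ is exactly the nontrivial fact that sufficiently general minimal reductions contained in a given reduction do not increase the reduction number (Trung's genericity results, or \cite[Section 8.6]{Swanson-Huneke}); you should cite this explicitly rather than fold it into ``affects neither Ratliff--Rush closures nor reduction numbers.'' With that single reference supplied, the proof is complete.
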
 

This lemma immediately gives the following fact, which will be useful in the proof of Corollary \ref{reg=red/ulrich}.

\begin{Corollary}\label{UlrRatliff}
	Let $R$ be a Cohen-Macaulay  local ring of positive dimension with infinite residue field and let $I$ be an Ulrich ideal of $R$. Then $$\widetilde{I^{n}}=I^n, \ \ \ \ \ \forall \, n\geq 1.$$ Therefore, for any parameter ideal $J$ which is a reduction of $I$, we have either ${\rm r}_{J}(I) = 0$ or $s^{*}(I)  =  {\rm r}_{J}(I)  = 1$.
\end{Corollary}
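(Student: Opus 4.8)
The plan is to derive Corollary \ref{UlrRatliff} directly from Lemma \ref{rednum1} together with the defining properties of an Ulrich ideal. First I would observe that since $R$ has infinite residue field, any minimal reduction $J$ of the $\M$-primary ideal $I$ is a parameter ideal, and by the Ulrich hypothesis (i) we have ${\rm r}_J(I)\leq 1$; in particular ${\rm r}(I)\leq 1$. This is exactly the hypothesis needed to invoke Lemma \ref{rednum1}, which then yields that every power of $I$ is Ratliff-Rush closed, i.e., $\widetilde{I^n}=I^n$ for all $n\geq 1$. That gives the first (displayed) assertion immediately.

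For the second assertion, I would recall that $s^*(I)=\min\{n\in\mathbb{N}\mid \widetilde{I^i}=I^i \text{ for all } i\geq n\}$ (from the discussion after Lemma \ref{lemmanaghipour}), combined with the convention fixed in Section 2 that $s^*(I)\geq 1$ always. Since every power of $I$ is Ratliff-Rush closed, we get $s^*(I)=1$. Now fix a parameter ideal $J$ which is a reduction of $I$. If ${\rm r}_J(I)=0$ there is nothing more to say; otherwise ${\rm r}_J(I)\geq 1$, and since also $s^*(I)=1$, a small case-check is needed. Actually the cleanest route: ${\rm r}_J(I)\leq 1$ by Ulrich property (i), so if ${\rm r}_J(I)\neq 0$ then ${\rm r}_J(I)=1=s^*(I)$, which is the stated dichotomy. (One should note that $I$ being Ulrich does not force ${\rm r}_J(I)$ to be nonzero — e.g.\ $I$ could be a parameter ideal — so both cases genuinely occur, which is why the statement is phrased as an "either/or".)

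I do not anticipate a serious obstacle here: the corollary is essentially an assembly of Lemma \ref{rednum1}, the definition of $s^*(I)$, the convention $s^*(I)\geq 1$, and the numerical constraint ${\rm r}_J(I)\leq 1$ from the Ulrich hypothesis. The only point requiring a touch of care is making the logical flow explicit — namely that ${\rm r}(I)\leq 1$ is legitimately available (it follows from ${\rm r}_J(I)\leq 1$ for a minimal reduction, which exists since the residue field is infinite) so that Lemma \ref{rednum1} applies, and then that "every power Ratliff-Rush closed" is literally the statement $s^*(I)=1$. After that the dichotomy ${\rm r}_J(I)=0$ or ${\rm r}_J(I)=s^*(I)=1$ is immediate from ${\rm r}_J(I)\in\{0,1\}$.
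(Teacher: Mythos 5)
Your proposal is correct and follows exactly the route the paper intends: the paper presents this corollary as an immediate consequence of Lemma \ref{rednum1} (via ${\rm r}(I)\leq {\rm r}_J(I)\leq 1$ from the Ulrich hypothesis), and then reads off $s^{*}(I)=1$ from the definition of $s^{*}$ together with the standing convention $s^{*}(I)\geq 1$. Your explicit bookkeeping of the dichotomy ${\rm r}_J(I)\in\{0,1\}$ matches what the paper leaves implicit.
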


This corollary
is particularly useful to test for ideals that are not Ulrich, as illustrated in the next two examples.

\begin{Example} \rm Let $k$ be an infinite field and $R = k[[x,y]]$. Then, by Corollary \ref{UlrRatliff}, the ideal $I = (x^{4}, x^{3}y, xy^{3}, y^{4})$ is not Ulrich, since $x^{2}y^{2} \in \widetilde{I} \setminus I$.
\end{Example}

\begin{Example} \rm Let $k$ be an infinite field and $R = k[[t^{3}, t^{10}, t^{11}]]$. As observed in \cite[Example 1.16]{Heinzer-Lantz-Shah}, the ideal $I = (t^{9}, t^{10}, t^{14})$ is not Ratliff-Rush closed (precisely, $t^{11} \in \widetilde{I} \setminus I$). Hence, Corollary \ref{UlrRatliff} gives that $I$ is not Ulrich.
\end{Example}

The next example shows that the converse of Corollary \ref{UlrRatliff} is not true.

\begin{Example}\rm Let $k$ be an infinite field and consider the zero-dimensional ideal $$I \, = \, (x^{6},\, x^{4}y^{2},\,
	x^{3}y^{3},\, x^{2}y^{4},\, xy^{5},\, y^{6}) \, \subset \, R \, = \, k[[x, y]].$$ According to \cite[Example 2.15]{Mafi2}
	we have $\mathrm{depth}\,\mathcal{G}(I) = 1$ and
	hence $\mathcal{G}(I)$ is not Cohen-Macaulay. By Remark \ref{G-is-CM}, the ideal $I$ cannot be Ulrich. On the other hand, because $\mathrm{grade}\,\mathcal{G}(I)_{+} > 0$ we have $\widetilde{I^{n}} = I^{n}$ for all $n \geq 1$.
\end{Example}

Now we are able to find the regularity of the Rees algebra -- hence of the associated graded ring -- of an Ulrich ideal (in arbitrary positive dimension). 

\begin{Corollary}\label{reg=red/ulrich}
	Let $R$ be a Cohen-Macaulay local ring of positive dimension with infinite residue field, and let $I$ be an Ulrich ideal of $R$. Then,
	$$\mathrm{reg}\,\mathcal{R}(I) \, = \, \mathrm{reg}\,\mathcal{G}(I) \, \leq \, 1,$$ with equality if and only if $I$ is not a parameter ideal. Furthermore, ${\rm r}(I)$ is independent.
\end{Corollary}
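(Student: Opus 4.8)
The plan is to deduce everything from the machinery already assembled, specializing Corollary \ref{corMafigen} (equivalently Theorem \ref{Mafigene}) together with the Ratliff–Rush information recorded in Corollary \ref{UlrRatliff}. First I would dispose of the case where $I$ is a parameter ideal: then ${\rm r}(I)=0$ (so that $\mathcal{R}(I)=\mathcal{S}(I)$ is a polynomial extension of $R$, or more directly ${\rm r}_J(I)=0$ for $J=I$), and by Remark \ref{cotareg} with $k=1$, or simply by the graded analogue of Theorem \ref{Mafigene} with $M=R$ and ${\rm r}=0$, we get $\mathrm{reg}\,\mathcal{R}(I)=\mathrm{reg}\,\mathcal{G}(I)={\rm r}_J(I)=0$. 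So in that case the regularity is $\leq 1$ and in fact $0$, consistent with the claimed ``equality iff not a parameter ideal.''

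Next, assume $I$ is not a parameter ideal. Pick a parameter ideal $J$ which is a reduction of $I$ (this exists because the residue field is infinite). By Corollary \ref{UlrRatliff}, either ${\rm r}_J(I)=0$ or $s^*(I)={\rm r}_J(I)=1$. I claim the first alternative is impossible: if ${\rm r}_J(I)=0$ then $JI^0=I$, i.e. $J=I$, contradicting that $I$ is not a parameter ideal. Hence ${\rm r}_J(I)=1$, and moreover $\widetilde{I^{\rm r}}=\widetilde{I}=I$ by Corollary \ref{UlrRatliff} (with ${\rm r}={\rm r}_J(I)=1$). Now I would apply Theorem \ref{Mafigene} with $M=R$ (valid in arbitrary dimension $d={\rm dim}\,R\geq 1$, using that $R$ is Cohen–Macaulay so the hypothesis $\widetilde{I^{\rm r}_{R/(x_1,\ldots,x_{j-1})}}=I^{\rm r}R/(x_1,\ldots,x_{j-1})$ for $j=1,\ldots,d-1$ needs to be checked): the point is that for a parameter ideal reduction the elements $x_1,\ldots,x_d$ form a regular sequence, each quotient $R/(x_1,\ldots,x_{j-1})$ is again Cohen–Macailay of positive dimension, the image of $I$ is primary with reduction number $\leq{\rm r}_J(I)=1$ there as well, so Lemma \ref{rednum1} applies to each quotient and gives that the relevant powers are Ratliff–Rush closed. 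Then Theorem \ref{Mafigene} yields $\mathrm{reg}\,\mathcal{R}(I)=\mathrm{reg}\,\mathcal{G}(I)={\rm r}_J(I)=1$, and the independence of ${\rm r}(I)$ is part of the conclusion of Theorem \ref{Mafigene}.

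The main thing to be careful about — and what I expect to be the only real obstacle — is the verification that the hypotheses of Theorem \ref{Mafigene} genuinely descend to every quotient $R/(x_1,\ldots,x_{j-1})$ for $1\leq j\leq d-1$. Concretely one needs: (i) $x_1,\ldots,x_d$ can be chosen to be a superficial sequence for $I$ generating $J$ (Lemma \ref{LemmaRV}), hence a regular sequence on $R$ since $R$ is Cohen–Macaulay (Lemma \ref{LemmaRV0}); (ii) modulo $(x_1,\ldots,x_{j-1})$ the ring stays Cohen–Macaulay of positive dimension and the image of $I$ stays primary with a parameter reduction whose reduction number is still $\leq 1$ (this last point uses that $(x_j,\ldots,x_d)$ maps to a reduction and ${\rm r}_{\bar J}(\bar I)\le {\rm r}_J(I)$, a standard specialization of superficial sequences, cf. Lemma 2.2 of \cite{Zamani1} or the discussion around Lemma \ref{LemmaRV0}); (iii) then Lemma \ref{rednum1} gives all powers of $\bar I$ Ratliff–Rush closed, in particular $\widetilde{\bar I}=\bar I$, which is exactly $\widetilde{I^{\rm r}_{R/(x_1,\ldots,x_{j-1})}}=I^{\rm r}R/(x_1,\ldots,x_{j-1})$ for ${\rm r}=1$. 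Once this bookkeeping is in place, the statement follows immediately; everything else is a direct citation of results already proved in the paper.
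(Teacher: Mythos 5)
Your proof is correct, and its overall skeleton coincides with the paper's: fix a parameter minimal reduction $J=(x_1,\ldots,x_d)$, verify the Ratliff--Rush hypothesis of Theorem \ref{Mafigene} in each quotient $R_j=R/(x_1,\ldots,x_{j-1})$, and conclude $\mathrm{reg}\,\mathcal{R}(I)=\mathrm{reg}\,\mathcal{G}(I)={\rm r}_J(I)\leq 1$, with the equality/parameter dichotomy and independence falling out as you describe. The one step where you genuinely diverge is the verification that $\widetilde{I_{R_j}}=IR_j$. The paper propagates the \emph{full} Ulrich property down the quotients: it uses \cite[Lemma 2.3]{Goto-Ozeki-Takahashi-Watanabe-Yoshida} together with $I_j/J_j\cong I/J$ to show each $IR_j$ is again an Ulrich ideal of $R_j$, and then applies Corollary \ref{UlrRatliff}. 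You instead propagate only the reduction-number bound and apply Lemma \ref{rednum1} (Heinzer--Lantz--Shah) directly; this is leaner, since the freeness condition in the Ulrich definition plays no role in that lemma, and it is even simpler than your write-up suggests: no superficiality or specialization lemma is needed for step (ii), because $JI=I^2$ passes to any quotient to give $\bar J\bar I=\bar I^2$ on the nose, and $\bar J=(x_j,\ldots,x_d)R_j$ is a parameter ideal of the Cohen--Macaulay ring $R_j$ (hence a \emph{minimal} reduction of the $\M R_j$-primary ideal $\bar I$, which is what you need to conclude ${\rm r}(\bar I)\leq 1$ and legitimately invoke Lemma \ref{rednum1}). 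The identification $\widetilde{\bar I}=\widetilde{I_{R_j}}$ that both arguments rely on is the same routine computation. Two cosmetic slips: in the parameter-ideal case you refer to the ``graded analogue'' of Theorem \ref{Mafigene}, but the setting is local, so the theorem itself applies; and the independence of ${\rm r}(I)$, while indeed stated as part of Theorem \ref{Mafigene}'s conclusion, is being applied to the particular $J$ with $JI=I^2$, so (as in the paper) one is really using that $\mathrm{reg}\,\mathcal{R}(I)$ bounds ${\rm r}_{J'}(I)$ from above for every minimal reduction $J'$. Neither point affects correctness.
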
	
\begin{proof} Pick a minimal reduction $J=(x_1, \ldots, x_d)$ of $I$ (notice that $J$ is necessarily a parameter ideal; see Definition \ref{Ulrideal}). Since $I$ is Ulrich, \cite[Lemma 2.3]{Goto-Ozeki-Takahashi-Watanabe-Yoshida} yields that $I/J$ is a free $R/I$-module. For $i = 1, \ldots, d$, set $R_i = R/(x_1, \ldots, x_{i - 1})$ (with $R_1=R$), $I_i = IR_i$ and $J_i = JR_i$. As $JI = I^2$, we have $J_iI_i = (I_i)^{2}$. Moreover, $I_i/J_i \cong I/J$ and $R_i/I_i \cong R/I$, which gives that $I_i/J_i$ is a free $R_i/I_i$-module. Applying \cite[Lemma 2.3]{Goto-Ozeki-Takahashi-Watanabe-Yoshida} once again, we get that $I_i$ is an Ulrich ideal of $R_i$ for all $i$. Now, by Corollary \ref{UlrRatliff} (with $n=1$) we have $\widetilde{I_i} = I_i=IR_i$ for all $i$. On the other hand, it is easy to see that $\widetilde{I_i} = \widetilde{I_{R_i}}$ in the notation of Theorem \ref{Mafigene} (with $M=R$), which therefore gives $\mathrm{reg}\,\mathcal{R}(I)  =  \mathrm{reg}\,\mathcal{G}(I) = {\rm r}_J(I) \leq  1$, as asserted. Observe that this also shows the independence of ${\rm r}(I)$. 

Now the characterization of equality can be rephrased as ${\rm r}_J(I)=0$ if and only if $I$ is a parameter ideal. Obviously, 
${\rm r}_J(I)=0$ means $I=J$, which is a parameter ideal. Conversely, if $I$ is a parameter ideal then ${\rm r}(I)=0$ (see Subsection \ref{lintype}) and hence ${\rm r}_J(I)=0$ by independence. \qed


\end{proof}

\begin{Example} \label{ex-Goto}  \rm Let $k$ be an infinite field. Given $\ell \geq 2$, consider the zero-dimensional ideal $$I \, = \, (t^{4},\, t^{6}) \, \subset \, R \, = \, k[[t^{4}, t^{6}, t^{4\ell - 1}]],$$ which clearly is not a parameter ideal. According to \cite[Example 2.7(1)]{Goto-Ozeki-Takahashi-Watanabe-Yoshida}, this ideal is Ulrich. Applying Corollary \ref{reg=red/ulrich}, we obtain $\mathrm{reg}\,\mathcal{R}(I)  = \mathrm{reg}\,\mathcal{G}(I) = 1$.
\end{Example}

\subsection{Hilbert-Samuel polynomial and postulation number}

Our next goal is to determine the Hilbert-Samuel coefficients and the postulation number of an Ulrich ideal. Recall that if $(R, \M)$ is a Cohen-Macaulay local ring of dimension $d\geq 1$ and $I$ is an $\M$-primary ideal of $R$, then it is a well-known fact that the Hilbert-Samuel polynomial of $I$ can be expressed as 
\begin{equation}\label{hilbpoly}
	P_{I}(n) \, = \, 
	\sum_{i = 0}^{d}(-1)^{i}{\rm e}_{i}(I)\binom{n + d - i - 1}{d - i},
\end{equation} 
where ${\rm e}_{0}(I), \ldots, {\rm e}_{d}(I)$ are the so-called {\it Hilbert-Samuel coefficients of $I$}. The number ${\rm e}_{0}(I)$ (which is always positive) is the multiplicity while ${\rm e}_{1}(I)$ is dubbed {\it Chern number} of $I$. 

\smallskip

For the connection between postulation and reduction numbers, the following fact will be useful.

\begin{Lemma}$($\cite[Theorem 2]{Marley}$)$\label{LemmaM}
Let $(R, \M)$ be a $d$-dimensional Cohen-Macaulay local ring with infinite residue field, and let $I$ be an $\M$-primary ideal with $\mathrm{grade}\,\mathcal{G}(I)_{+} \geq d-1$. Then ${\rm r}(I) = \rho(I) +d$.
\end{Lemma}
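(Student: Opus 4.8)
The plan is to induct on $d=\dim R$, reducing to the one-dimensional case by cutting $\mathcal{G}(I)$ with a generic linear form. Since $\mathrm{grade}\,\mathcal{G}(I)_{+}\ge d-1\ge 1$ when $d\ge 2$ and the residue field is infinite, a standard generic-element argument (graded prime avoidance in $\mathcal{G}(I)_{1}$ combined with Sally's machine) produces an $x\in I$ that is superficial for $I$ and whose leading form $x^{*}\in\mathcal{G}(I)_{1}$ is a nonzerodivisor on $\mathcal{G}(I)$. Put $\bar R=R/xR$, still a Cohen--Macaulay local ring with infinite residue field but of dimension $d-1$, and $\bar I=I\bar R$, an $\M\bar R$-primary ideal with $\mathcal{G}(\bar I)\cong\mathcal{G}(I)/x^{*}\mathcal{G}(I)$; in particular $\mathrm{grade}\,\mathcal{G}(\bar I)_{+}\ge d-2$, so the inductive hypothesis applies to $\bar I$.

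For the base case $d=1$, take a (necessarily principal) minimal reduction $J=(x)$ of $I$ with $x$ superficial, hence $R$-regular. For every $n$ one has $xI^{n}\subseteq I^{n+1}$, and because $x$ is $R$-regular and $(x)$ is a parameter reduction of $I$ one computes $\lambda(I^{n}/xI^{n})={\rm e}_{0}(x;I^{n})=\lambda(R/xR)={\rm e}_{0}(I)$. Hence $\lambda(I^{n}/I^{n+1})$ equals the multiplicity ${\rm e}_{0}(I)$ exactly when $xI^{n}=I^{n+1}$, i.e. for $n\ge{\rm r}_{J}(I)$, and is strictly smaller otherwise; thus the Hilbert function of $\mathcal{G}(I)$ meets its (constant) Hilbert polynomial precisely from ${\rm r}_{J}(I)$ on, which says $p(\mathcal{G}(I))={\rm r}_{J}(I)-1$. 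On the other hand, from $H_{I}(n)=\sum_{k=0}^{n-1}\lambda(I^{k}/I^{k+1})$ one gets $H_{I}(n)-H_{I}(n-1)=\lambda(I^{n-1}/I^{n})$, and comparing with the analogous relation between $P_{I}$ and the Hilbert polynomial of $\mathcal{G}(I)$ shows that $H_{I}$ and $P_{I}$ last disagree at $n=p(\mathcal{G}(I))$, i.e. $\rho(I)=p(\mathcal{G}(I))$. Therefore ${\rm r}(I)={\rm r}_{J}(I)=\rho(I)+1$, and by Lemma \ref{LemmaRV} this value is independent of the minimal reduction.

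For the inductive step I use two transfer facts about the pair $(I,\bar I)$. First, $\rho(\bar I)=\rho(I)+1$: since $x^{*}$ is a degree-one nonzerodivisor on $\mathcal{G}(I)$, the Hilbert function of $\mathcal{G}(\bar I)$ is the first difference of that of $\mathcal{G}(I)$, so $p(\mathcal{G}(\bar I))=p(\mathcal{G}(I))+1$; combined with the identity $\rho=p(\mathcal{G}(\,\cdot\,))$ established above (which holds in any dimension) this gives the claim. Second, ${\rm r}(I)={\rm r}(\bar I)$: extending $x$ to a superficial sequence generating a minimal reduction $J=(x,x_{2},\dots,x_{d})$ of $I$ (Lemma \ref{LemmaRV}), regularity of $x^{*}$ yields $I^{n+1}\cap(x)=xI^{n}$ for all $n\ge 1$, hence $\bar J\bar I^{n}=\bar I^{n+1}\Longleftrightarrow JI^{n}=I^{n+1}$, so ${\rm r}_{J}(I)={\rm r}_{\bar J}(\bar I)$; running this over all minimal reductions (which all lift/descend this way) matches the two minima. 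Feeding in the inductive hypothesis ${\rm r}(\bar I)=\rho(\bar I)+(d-1)$ gives ${\rm r}(I)=\rho(\bar I)+(d-1)=\rho(I)+d$, and independence propagates since the first $d-1$ leading forms of a superficial generating sequence of any minimal reduction are $\mathcal{G}(I)$-regular under the grade hypothesis (or, more cheaply, because ${\rm r}_{J'}(I)\ge a_{d}(\mathcal{G}(I))+d=\rho(I)+d$ for every minimal reduction $J'$, using $H^{j}_{\mathcal{G}(I)_{+}}(\mathcal{G}(I))=0$ for $j\le d-2$).

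I expect the genuine obstacle to be the reduction step: producing a superficial element of $I$ whose leading form is a nonzerodivisor on $\mathcal{G}(I)$ --- which is exactly where $\mathrm{grade}\,\mathcal{G}(I)_{+}\ge d-1$ and the infinite residue field do their work --- and verifying that such an element is compatible with both the reduction number and the postulation number upon passing to $\bar R$. A minor nuisance is the behaviour of negative postulation numbers, so that ``$\rho=p(\mathcal{G}(\,\cdot\,))$'' and ``first differences add one to $p$'' hold unconditionally and not merely for large $n$; this is bookkeeping. A purely cohomological route, going through the strict inequality $a_{d-1}(\mathcal{G}(I))<a_{d}(\mathcal{G}(I))$ in the spirit of \cite[Theorem 2.1(a)]{Marley2} and a direct length count identifying ${\rm r}_{J}(I)$ with $a_{d}(\mathcal{G}(I))+d$, is also available but overlaps heavily with the above.
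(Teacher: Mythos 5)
This lemma is imported verbatim from Marley's paper and is not proved in the present article, so there is no internal proof to compare against; I can only assess your reconstruction on its merits. Your overall strategy --- induct on $d$ by cutting with an element of $I$ whose leading form is a degree-one nonzerodivisor on $\mathcal{G}(I)$, settle $d=1$ by the length count $\lambda(I^{n}/xI^{n})={\rm e}_{0}(I)$, and transfer $\rho$ and ${\rm r}$ across the hyperplane section --- is the standard route to this statement, and the individual computations are correct: the base case, the identity $\rho(I)=p(\mathcal{G}(I))$ (which indeed holds in every dimension, since the differencing argument between $H_{I}-P_{I}$ and the Hilbert function of $\mathcal{G}(I)$ is reversible; the paper itself quotes this identity from Strunk in the proof of Corollary \ref{strunk}), the shift $p(\mathcal{G}(\bar I))=p(\mathcal{G}(I))+1$, and the equivalence $JI^{n}=I^{n+1}\Leftrightarrow \bar J\bar I^{n}=\bar I^{n+1}$ for reductions $J$ containing $x$, via $I^{n+1}\cap(x)=xI^{n}$.

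The one step that does not hold as written is ``running this over all minimal reductions (which all lift/descend this way) matches the two minima.'' With $x$ fixed at the outset, a minimal reduction $J_{0}$ of $I$ achieving ${\rm r}(I)$ need not contain $x$, and then $\bar J_{0}$ is a $d$-generated, hence non-minimal, reduction of $\bar I$; so the inequality ${\rm r}(\bar I)\le {\rm r}(I)$ is not established. Two repairs are available, and you gesture at the second. Either do not fix $x$ in advance: for each minimal reduction $J$, the ideal of $\mathcal{G}(I)$ generated by the leading forms of $J$ is $\mathcal{G}(I)_{+}$-primary, so prime avoidance over the associated primes (none of which contain $\mathcal{G}(I)_{+}$, by the grade hypothesis) produces $x\in J$ with $x^{*}$ regular; this gives ${\rm r}_{J}(I)\ge {\rm r}(\overline{I})=\rho(I)+d$ for every $J$, while the lifting argument you already have gives ${\rm r}(I)\le\rho(I)+d$. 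Or invoke the standard superficial-element lemma (Sally's machine / Valabrega--Valla, e.g.\ in Rossi--Valla's notes): if $x$ is superficial for $I$ and $\mathrm{grade}\,\mathcal{G}(I)_{+}\ge 1$ then $x^{*}$ is automatically $\mathcal{G}(I)$-regular, whence by Lemma \ref{LemmaRV} every minimal reduction admits a generating superficial sequence whose first $d-1$ leading forms are $\mathcal{G}(I)$-regular, and your transfer then yields ${\rm r}_{J}(I)=\rho(I)+d$ for every minimal reduction simultaneously. Either way the argument closes; as submitted, this descent step is the only genuine gap.
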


\begin{Proposition}\label{HS-poly}
	Let $R$ be a Cohen-Macaulay local ring of dimension $d\geq 1$ and with infinite residue field, and let $I$ be an Ulrich ideal of $R$ minimally generated by $\nu(I)$ elements. Then $$P_{I}(n) \, = \, \lambda(R/I)\left[(\nu(I) - d + 1)\binom{n+d-1}{d} - (\nu(I) - d)\binom{n+d-2}{d-1}\right].$$ Furthermore, $\rho(I) = - d$ if $I$ is a parameter ideal, and $\rho(I) = 1 - d$ otherwise. In particular, $H_I(n)=P_I(n)$ for all $n\geq 1$.
	\end{Proposition}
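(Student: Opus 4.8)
The plan is to compute the Hilbert--Samuel function of an Ulrich ideal $I$ directly, exploiting the two defining properties (the reduction number condition and the freeness of $I/I^2$ over $R/I$), and then read off both the polynomial and the postulation number by comparing with the general shape \eqref{hilbpoly}. First I would dispose of the parameter case: if $I=J$ is a parameter ideal, then $\nu(I)=d$, $I^n/I^{n+1}$ is free over $R/I$ of rank $\binom{n+d-1}{d-1}$, so $\lambda(R/I^n)=\lambda(R/I)\binom{n+d-1}{d}$ already for all $n\ge 0$; matching with \eqref{hilbpoly} forces $\mathrm{e}_0(I)=\lambda(R/I)$, all higher $\mathrm{e}_i(I)=0$, and $\rho(I)=-d$ since $H_I$ and $P_I$ agree for all $n$ (including $n\le 0$, where $P_I(n)=0$ exactly when $n+d-1<d$, i.e. $n\le 0$; the last nonagreement is at $n=-d$). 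In the non-parameter case, set $J=(x_1,\dots,x_d)$ a minimal (hence parameter) reduction of $I$ with ${\rm r}_J(I)=1$, so $I^2=JI$ and therefore $I^{n+1}=J^nI$ for all $n\ge 0$. The key structural input is \cite[Lemma 2.3]{Goto-Ozeki-Takahashi-Watanabe-Yoshida}: $I/J$ is free over $R/I$, and since $R$ is Cohen--Macaulay the associated graded ring $\mathcal{G}(I)$ is Cohen--Macaulay (Remark \ref{G-is-CM}), with $\mathcal{G}(I)$ having a nice presentation forced by ${\rm r}_J(I)\le 1$.

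The computational heart is to show $\lambda(I^n/I^{n+1}) = \lambda(R/I)\big[(\nu(I)-d)\binom{n+d-1}{d-1}+\binom{n+d-1}{d-1}\big]$ — more precisely, I expect $I^n/I^{n+1}$ to decompose, via the Cohen--Macaulayness of $\mathcal{G}(I)$, as a free $R/I$-module whose rank is the Hilbert function of $\mathcal{G}(I)$ in degree $n$. Since ${\rm r}_J(I)\le 1$ and $J$ is generated by a regular sequence forming a homogeneous system of parameters of $\mathcal{G}(I)$, the ring $\mathcal{G}(I)/J\mathcal{G}(I)$ is concentrated in degrees $0$ and $1$, with degree-$0$ part $R/I$ and degree-$1$ part $I/J+I^2$; using $I/J$ free of rank $\nu(I)-d$ over $R/I$ (here one checks $\nu(I/(J+I^2))=\nu(I)-d$ because $J$ is a minimal reduction and by Nakayama the $d$ generators of $J$ extend to a minimal generating set of $I$), the Hilbert series of $\mathcal{G}(I)$ comes out as $\lambda(R/I)\,\frac{1+(\nu(I)-d)z}{(1-z)^d}$. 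Extracting coefficients gives $\lambda(I^n/I^{n+1})=\lambda(R/I)\big[\binom{n+d-1}{d-1}+(\nu(I)-d)\binom{n+d-2}{d-2}\big]$ for $n\ge 1$, and summing from $0$ to $n-1$ (a Vandermonde/hockey-stick telescoping) yields $\lambda(R/I^n)=\lambda(R/I)\big[\binom{n+d-1}{d}+(\nu(I)-d)\binom{n+d-2}{d-1}\big]$, valid for all $n\ge 0$. Rewriting $\binom{n+d-1}{d}+(\nu(I)-d)\binom{n+d-2}{d-1}=(\nu(I)-d+1)\binom{n+d-1}{d}-(\nu(I)-d)\binom{n+d-2}{d-1}$ (using Pascal) gives the claimed formula for $P_I(n)$.

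For the postulation number, since the displayed formula for $\lambda(R/I^n)$ is an honest polynomial identity holding for every integer $n\ge 0$ — equivalently, $\mathrm{grade}\,\mathcal{G}(I)_+ = d$ (full depth, Cohen--Macaulay), so by Lemma \ref{LemmaM} (or directly) $\rho(I)+d = {\rm r}(I) = 1$ — we get $\rho(I)=1-d$, and the agreement $H_I(n)=P_I(n)$ for all $n\ge 1$ (indeed all $n\ge 1-d+1=2-d$, and for $n\le 0$ one only needs to check the single point $n=1-d$ where $P_I$ is nonzero but $H_I(n)=0$). The main obstacle I anticipate is justifying cleanly that $\dim_{R/I}(I/(J+I^2))=\nu(I)-d$ and that the Hilbert function of $\mathcal{G}(I)$ is exactly $n\mapsto \lambda(R/I)[\binom{n+d-1}{d-1}+(\nu(I)-d)\binom{n+d-2}{d-2}]$ — i.e. that the freeness of $I/I^2$ over $R/I$ propagates to freeness of every graded piece $I^n/I^{n+1}$ — which is where the Cohen--Macaulayness of $\mathcal{G}(I)$ (Remark \ref{G-is-CM}) together with the regularity of the $x_i$ on $\mathcal{G}(I)$ must be used to reduce to the Artinian quotient $\mathcal{G}(I)/J\mathcal{G}(I)$; everything else is hockey-stick bookkeeping.
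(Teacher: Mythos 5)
Your approach is correct in substance but genuinely different from the paper's. The paper never touches the Hilbert series of $\mathcal{G}(I)$: it computes the coefficients directly, using Huneke's theorem (\cite[Theorem 2.1]{Huneke}) for ideals with $I^2=JI$ to get ${\rm e}_1(I)={\rm e}_0(I)-\lambda(R/I)$ and ${\rm e}_j(I)=0$ for $j\geq 2$, and Valla's length formula $\lambda(I/I^2)={\rm e}_0(I)+(d-1)\lambda(R/I)$ combined with the freeness of $I/I^2$ to get ${\rm e}_0(I)=\lambda(R/I)(\nu(I)-d+1)$; it then obtains $\rho(I)$ from Marley's lemma $\rho(I)={\rm r}(I)-d$ together with ${\rm r}(I)\leq 1$. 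Your route --- computing the Hilbert series of $\mathcal{G}(I)$ as $\lambda(R/I)\,(1+(\nu(I)-d)z)/(1-z)^d$ from the Artinian reduction $\mathcal{G}(I)/J\mathcal{G}(I)$ (using Cohen--Macaulayness of $\mathcal{G}(I)$ and the fact that $J+I^2=J$, $I/J\cong(R/I)^{\nu(I)-d}$) and then summing --- is more self-contained (it bypasses Huneke's and Valla's coefficient formulas entirely) and has the advantage of producing the exact Hilbert function for all $n\geq 0$, so that $\rho(I)$ can be read off directly without Lemma \ref{LemmaM}; the paper's route is shorter given the cited results. One caveat: your binomial bookkeeping contains two slips that happen to cancel. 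The coefficient of $z^n$ in $z/(1-z)^d$ is $\binom{n+d-2}{d-1}$, not $\binom{n+d-2}{d-2}$, so $\lambda(I^n/I^{n+1})=\lambda(R/I)\bigl[\binom{n+d-1}{d-1}+(\nu(I)-d)\binom{n+d-2}{d-1}\bigr]$ and the partial sums give $\lambda(R/I^n)=\lambda(R/I)\bigl[\binom{n+d-1}{d}+(\nu(I)-d)\binom{n+d-2}{d}\bigr]$; your stated intermediate identity $\binom{n+d-1}{d}+(\nu(I)-d)\binom{n+d-2}{d-1}=(\nu(I)-d+1)\binom{n+d-1}{d}-(\nu(I)-d)\binom{n+d-2}{d-1}$ is false, but the corrected version with $\binom{n+d-2}{d}$ in the left-hand side is exactly Pascal's rule and lands on the paper's formula. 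With that arithmetic repaired, the argument goes through.
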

\begin{proof} Let $J$ be a minimal reduction of $I$ (note $J$ is a parameter ideal; see Definition \ref{Ulrideal}). Since $I^2=IJ$, \cite[Theorem 2.1]{Huneke} gives that the Chern number of $I$ is given by ${\rm e}_{1}(I) = {\rm e}_{0}(I) - \lambda(R/I)$ and in addition ${\rm e}_{j}(I)  =  0$ for all $j = 2, \ldots, d$. Moreover, by \cite[Lemma 1]{Valla},
\begin{equation}\label{vallaequality}
	\lambda(I/I^2) \, = \, {\rm e}_0(I) + (d-1)\lambda(R/I).
\end{equation} 
On the other hand, $I/I^2$ is a free $R/I$-module by hypothesis, and clearly the minimal number of generators of $I/I^2$ coincides with $\nu := \nu(I)$. Thus, setting $\lambda :=\lambda(R/I)$, we get $\lambda(I/I^2)=\lambda((R/I)^{\nu})=\nu \lambda$. Therefore, by (\ref{vallaequality}), $${\rm e}_0(I) \, = \, \nu \lambda - (d-1)\lambda \, = \, \lambda(\nu - d +1)$$ and hence ${\rm e}_1(I) = {\rm e}_0(I)  - \lambda = \lambda(\nu - d)$. Using (\ref{hilbpoly}), the formula for $P_I(n)$ follows. 

Finally,  note that \cite[Theorem 1]{Valla} yields $\mathrm{grade}\,\mathcal{G}(I)_{+} = d$. It now follows from Lemma \ref{LemmaM} that $\rho(I) =  {\rm r}(I) - d$. By Corollary \ref{reg=red/ulrich} and its proof, we get 
${\rm r}(I) \leq 1$ with ${\rm r}(I) = 1$ if and only if $I$ is not a parameter ideal, so the assertions about $\rho(I)$ hold. In particular, $\rho(I)\leq 0$, which gives $H_I(n)=P_I(n)$ for all $n\geq 1$.
\qed 
\end{proof}

\medskip

In the Gorenstein case, Proposition \ref{HS-poly} admits the following version.

\begin{Corollary}\label{HS-poly-Gor}
	Let $R$ be a Gorenstein local ring of dimension $d\geq 1$ and with infinite residue field, and let $I$ be an Ulrich ideal of $R$ which is not a parameter ideal. Then $$P_{I}(n) \, = \, \lambda(R/I^n) \, = \, \frac{\lambda(R/I)}{d!}\cdot \frac{(2n+d-2)(n+d-2)!}{(n-1)!}, \ \ \ \ \ \forall n\geq 2-d.$$ 
	\end{Corollary}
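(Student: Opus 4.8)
The plan is to specialize Proposition \ref{HS-poly} to the Gorenstein case and simplify. The key observation is that when $R$ is Gorenstein and $I$ is an Ulrich ideal that is not a parameter ideal, the minimal number of generators $\nu(I)$ is forced to take the value $\nu(I) = d+1$. Indeed, by \cite[Corollary 2.6]{Goto-Ozeki-Takahashi-Watanabe-Yoshida} (or the relevant structural result on Ulrich ideals in Gorenstein rings), a non-parameter Ulrich ideal in a Gorenstein local ring of dimension $d$ is minimally generated by exactly $d+1$ elements; I would cite this directly. Granting this, I would substitute $\nu(I) = d+1$ into the formula from Proposition \ref{HS-poly}:
$$P_I(n) = \lambda(R/I)\left[2\binom{n+d-1}{d} - \binom{n+d-2}{d-1}\right],$$
since $\nu(I) - d + 1 = 2$ and $\nu(I) - d = 1$.

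Next I would carry out the binomial identity that turns the bracketed expression into the claimed closed form $\frac{(2n+d-2)(n+d-2)!}{d!\,(n-1)!}$. Writing $\binom{n+d-1}{d} = \frac{(n+d-1)!}{d!\,(n-1)!}$ and $\binom{n+d-2}{d-1} = \frac{(n+d-2)!}{(d-1)!\,(n-1)!}$, the bracket becomes
$$\frac{(n+d-2)!}{d!\,(n-1)!}\Bigl[2(n+d-1) - d\Bigr] = \frac{(n+d-2)!\,(2n+d-2)}{d!\,(n-1)!},$$
using $(n+d-1)! = (n+d-1)(n+d-2)!$ and $d! = d\cdot(d-1)!$. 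This is a routine manipulation. The equality $\lambda(R/I^n) = P_I(n)$ for all $n \geq 2-d$, and in particular the claim that this holds even at the low values of $n$ down to $2-d$, follows from the postulation number computation in Proposition \ref{HS-poly}: since $\rho(I) = 1-d$ in the non-parameter case, we have $H_I(n) = P_I(n)$ for all integers $n \geq 2-d$ (and trivially $\lambda(R/I^n)$ is the value of $H_I$ extended by the usual conventions). One should double-check the boundary behavior — for instance that $\binom{n+d-2}{d-1}$ and the factorial expression agree with the convention $H_I(n)=\lambda(R/I^n)$ at $n=2-d, 3-d, \ldots$ — but this is exactly what $\rho(I)=1-d$ encodes.

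The main obstacle, if any, is pinning down and correctly citing the fact that $\nu(I)=d+1$ for a non-parameter Ulrich ideal in a Gorenstein ring; everything after that is bookkeeping with binomial coefficients. If that structural fact is not available in the cited form, an alternative is to note that in the Gorenstein case the freeness of $I/I^2$ over $R/I$ together with the self-duality forces the rank, but I would prefer to quote \cite{Goto-Ozeki-Takahashi-Watanabe-Yoshida} directly. I expect no difficulty with the range $n \geq 2-d$ since Proposition \ref{HS-poly} already delivers $\rho(I)=1-d$.
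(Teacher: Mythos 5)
Your proposal is correct and follows essentially the same route as the paper: cite \cite[Corollary 2.6]{Goto-Ozeki-Takahashi-Watanabe-Yoshida} to get $\nu(I)=d+1$, substitute into Proposition \ref{HS-poly}, simplify the binomial expression, and invoke $\rho(I)=1-d$ for the range $n\geq 2-d$. Your explicit binomial manipulation is exactly the ``elementary simplification'' the paper leaves to the reader, and it checks out.
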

\begin{proof} In this case, we have $\nu(I)=d+1$ by \cite[Corollary 2.6(b)]{Goto-Ozeki-Takahashi-Watanabe-Yoshida}, and thus Proposition \ref{HS-poly} gives $P_{I}(n) =  \lambda(R/I)N$, where $N=2\binom{n+d-1}{d} - \binom{n+d-2}{d-1}$. Now, by elementary simplifications, we see that $N=(2n+d-2)(n+d-2)!/d!(n-1)!$, as needed. The fact that the equality holds for all $n\geq 2-d$ follows from $\rho(I)=1-d$, as shown in the proposition. \qed
\end{proof}

\medskip

Next we remark that Ulrich ideals that are not parameter ideals are the same as Ulrich ideals with non-zero Chern number.

\begin{Remark}\rm Maintain the setting and hypotheses of Proposition \ref{HS-poly}. We have shown in particular that the Chern number of $I$ is given by $${\rm e}_1(I) \, = \, \lambda(R/I)(\nu(I) - d).$$ As a consequence, the Ulrich ideal $I$ is a parameter ideal if and only if  ${\rm e}_1(I)=0$. For instance, if $R$ is Gorenstein and $I$ is not a parameter ideal, then ${\rm e}_1(I)=\lambda (R/I)\neq 0$.
\end{Remark}

\begin{Example} \rm Let us revisit the Ulrich ideal $I\subset R$ of Example \ref{ex-Goto}. Note that $\nu(I)=2$ and $\lambda(R/I)=2$. Using Proposition \ref{HS-poly} we get ${\rm e}_0(I)=4$, ${\rm e}_1(I)=2$, $\rho(I)=0$, and then $$P_I(n) \, = \, \lambda(R/I^n) \, = \, 4n-2, \ \ \ \ \ \forall n\geq 1.$$
\end{Example}

\begin{Example}\label{E8} \rm Consider the so-called $E_8$-singularity $R={\mathbb C}[[x, y, z]]/(x^3+y^5+z^2)$, which is a rational double point.
By \cite[Example 7.2]{Goto-et-al-2}, the ideal $I=(x, y^2, z)R$ is an Ulrich ideal. Here we have $d=2$, $\nu(I)=3$ and $\lambda(R/I)=2$, so that ${\rm e}_0(I)=4$, ${\rm e}_1(I)=2$. Also, $\rho(I)=-1$. By Corollary \ref{HS-poly-Gor}, we get
$$P_I(n) \, = \, \lambda(R/I^n) \, = \, \lambda(R/I)n^2 = \, 2n^2, \ \ \ \ \ \forall n\geq 0.$$
\end{Example}

\begin{Example} \rm Fix integers $a\geq b\geq c\geq 2$, and consider the local ring 
$$R \, = \, {\mathbb C}[[x, y, z, t]]/(xy - t^{a+b},\, xz - t^{a+c} + zt^a,\, yz - yt^c+zt^b)$$
which is a rational surface singularity (more precisely, a rational triple point), hence Cohen-Macaulay. Given an integer $\ell$ with $1\leq \ell \leq c$, consider the ideal $I  =  (x, y, z, t^{\ell})R$.  By \cite[Example 7.5]{Goto-et-al-2}, the ideal $I$ is Ulrich, with $\lambda(R/I)=\ell$. Moreover, $d=2$ and $\nu(I)=4$, so that ${\rm e}_0(I)=3\ell$ and ${\rm e}_1(I)=2\ell$. We also have $\rho(I)=-1$. Proposition \ref{HS-poly} thus yields 
$$P_I(n) \, = \, \lambda(R/I^n) \, = \, 3\ell\binom{n+1}{2} - 2\ell n \, = \, \frac{\ell n}{2}(3n-1), \ \ \ \ \ \forall n\geq 0.$$
\end{Example}

\begin{Example} \rm Fix integers $m\geq 1$, $d\geq 2$  and $n_1,\ldots, n_d\geq 2$. Given an infinite field $K$, consider the $d$-dimensional local hypersurface ring $$R \, = \, K[[x_0, x_1, \ldots, x_d]]/(x_0^{2m}+x_1^{n_1}+ \ldots + x_d^{n_d}).$$ Now, fix integers $k_1, \ldots, k_d$ with $1\leq k_i\leq 
\lfloor n_i/2\rfloor$ for all $i=1,\ldots, d$.  By \cite[Example 2.4]{Goto-et-al-2}, the ideal $I  =  (x_0^m, x_1^{k_1}, \ldots, x_d^{k_d})R$ is Ulrich. In this example we have $\lambda(R/I)=mk_1\cdots k_d$, so that 
${\rm e}_0(I)=2mk_1\cdots k_d$ and ${\rm e}_1(I)=mk_1\cdots k_d$. By Corollary \ref{HS-poly-Gor},
$$P_I(n) \, = \, \lambda(R/I^n) \, = \, \frac{mk_1\cdots k_d}{d!}\cdot \frac{(2n+d-2)(n+d-2)!}{(n-1)!}, \ \ \ \ \ \forall n\geq 2-d.$$
In particular, for $d=3$, we have $P_I(n)=\lambda(R/I^n)=\frac{mk_1k_2k_3}{6}(2n+1)(n+1)n$, for all $n\geq -1$. Note that $P_I(-2)=-mk_1k_2k_3\neq 0=H_I(-2)$, while $P_I(-1)=0=H_I(-1)$, $P_I(0)=0=H_I(0)$, $P_I(1)=mk_1k_2k_3=\lambda(R/I)=H_I(1)$, and so on.
\end{Example}



\subsection{Further results}

In this part we shall provide a correction of a proposition of Mafi as well as improvements of independent results by other authors.

Let $(R, \M)$ be a two-dimensional Cohen-Macaulay local ring with infinite residue field, and let $I$ be an $\M$-primary ideal satisfying $\widetilde{I}=I$. Let $J$ be a minimal reduction of $I$. Then,  \cite[Proposition 2.6]{Mafi2} claims that ${\rm r}_{J}(I) = 2$ if and only if $$H_{I}(n) = P_{I}(n), \ \ \ \ \ n = 1,\, 2.$$ However, if we take $I$ as being an Ulrich ideal, then ${\rm r}_{J}(I) \leq 1$ and we have seen in Corollary \ref{UlrRatliff} that $\widetilde{I}=I$; moreover, our Proposition \ref{HS-poly} yields in particular $H_{I}(n) = P_{I}(n)$ for $n = 1, 2$. Any such $I$ is therefore a counter-example to Mafi's claim. 

We shall establish the correct statement in Proposition \ref{genItoh}. First, we need a lemma.

\begin{Lemma}\label{2lema} Let $(R, \M)$ be a two-dimensional Cohen-Macaulay local ring with infinite residue field, $I$ an $\M$-primary ideal and $J$ a minimal reduction of $I$ with ${\rm r}_{J}(I) \leq 2$. Then, $\widetilde{I}=I$ if and only if $\mathrm{grade}\,\mathcal{G}(I)_{+} \geq 1$.
	\end{Lemma}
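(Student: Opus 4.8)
The plan is to prove both implications using the characterization of $\widetilde I$ via superficial elements together with standard facts relating $\mathrm{grade}\,\mathcal G(I)_+$ to the behaviour of colon ideals. Since $R/\M$ is infinite, pick a minimal reduction $J=(x_1,x_2)$ generated by a superficial sequence of $I$ (Lemma \ref{LemmaRV}); as $R$ is Cohen-Macaulay of dimension $2$ and $I$ is $\M$-primary, $x_1,x_2$ is an $R$-regular sequence. The key observation is that $\mathrm{grade}\,\mathcal G(I)_+\geq 1$ is equivalent to the condition that $x_1$ (its initial form in $\mathcal G(I)_1$) is a nonzerodivisor on $\mathcal G(I)$, which in turn is equivalent to $(I^{n+1}\colon x_1)=I^n$ for \emph{all} $n\geq 1$ (not merely $n\gg 0$); by the superficial-element description of $s^*(I)$ recalled after Lemma \ref{propnaghipour} (citing \cite[Corollary 2.7]{Puthenpurakal1}), this says exactly $s^*(I)=1$, i.e. $\widetilde{I^n}=I^n$ for all $n\geq 1$. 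In particular it forces $\widetilde I=I$.

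For the forward implication, assume $\widetilde I=I$. The hypothesis ${\rm r}_J(I)\leq 2$ gives $I^{n+1}=JI^n$ for all $n\geq 2$, so the graded structure is controlled in low degrees and it suffices to check the colon condition $(I^{n+1}\colon x_1)=I^n$ for $n=1,2$ (for $n\gg 0$ it holds automatically since $x_1$ is superficial, and by Proposition \ref{regelem}(b) with $m=0,1$ we have $\widetilde{I^{m+1}}\colon x_1=\widetilde{I^m}$). For $n=1$: if $a\in I^2\colon x_1$ then $a x_1\in I^2$, and since $\widetilde{I^2}\colon x_1=\widetilde I=I$ we need $\widetilde{I^2}=I^2$. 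Here ${\rm r}_J(I)\leq 2$ enters: with $x_1,x_2$ a regular sequence and $I^3=JI^2$, a direct manipulation (of the type carried out in the proof of Theorem \ref{Mafigene}, reducing modulo $x_1$) shows $\widetilde{I^2}=I^2$ once $\widetilde I=I$ is known. I would then check the $n=2$ case analogously (or observe it is immediate from ${\rm r}_J(I)\leq 2$, which already yields $I^3=JI^2$ and hence $\widetilde{I^2}=I^3\colon I=I^2$ by Proposition \ref{s*conjec}-type reasoning). Having established $\widetilde{I^n}=I^n$ for $n=1,2$ and for $n\gg 0$, and since the only obstruction to $(I^{n+1}\colon x_1)=I^n$ sits in these finitely many degrees when ${\rm r}_J(I)\leq 2$, we conclude $x_1$ is $\mathcal G(I)$-regular, i.e. $\mathrm{grade}\,\mathcal G(I)_+\geq 1$.

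The reverse implication is the easier direction: $\mathrm{grade}\,\mathcal G(I)_+\geq 1$ implies $s^*(I)=1$ (by \cite[Fact 9]{Heinzer-Johnson-Lantz-Shah}, as used repeatedly in the preceding examples), hence $\widetilde{I^n}=I^n$ for all $n\geq 1$ and in particular $\widetilde I=I$; here the hypothesis ${\rm r}_J(I)\leq 2$ is not even needed.

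The main obstacle I anticipate is the step $\widetilde I=I\Rightarrow\widetilde{I^2}=I^2$ under the constraint ${\rm r}_J(I)\leq 2$: in general $\widetilde I=I$ does \emph{not} propagate to higher powers (cf. the last example of Section \ref{gen-of-Mafi}, where $s^*(I)$ can be large while $\widetilde I=I$), so one genuinely must exploit the low reduction number. The cleanest route is probably to pass to the one-dimensional ring $R/(x_1)$, where $\overline I$ has a principal reduction $(\overline{x_2})$ with reduction number $\leq 2$, and use a Huneke-style argument on lengths (the $\lambda(I^2/JI)$ versus $\lambda(I/J)$ bookkeeping, as in \cite[Theorem 2.1]{Huneke} which is cited later) to pin down that $\widetilde{I^2}=I^2$; alternatively one tracks the colon ideals modulo $x_1$ exactly as in the final displayed computation of the proof of Theorem \ref{Mafigene}. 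Either way, the two-dimensional Cohen-Macaulay hypothesis and ${\rm r}_J(I)\leq 2$ are used precisely to control the single extra degree where $\widetilde{I^2}$ could differ from $I^2$.
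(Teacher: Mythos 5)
Your skeleton is right: the backward implication is immediate from \cite[Fact 9]{Heinzer-Johnson-Lantz-Shah}, and the forward implication reduces to showing that the initial form of a superficial element $x_1$ of $J=(x_1,x_2)$ is $\mathcal{G}(I)$-regular, i.e.\ that $I^{n+1}:x_1=I^n$ for all $n\geq 1$. You also have the correct first step, though you misstate it: Proposition \ref{regelem}(b) gives $I^2:x_1\subseteq\widetilde{I^2}:x_1=\widetilde{I}=I$ directly, with no need for $\widetilde{I^2}=I^2$ at this stage (this is exactly the paper's computation). The genuine gap is the passage from $n=1$ to all $n$, and neither of your proposed bridges works as written. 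The identity $\widetilde{I^2}=I^3:I$ is false in general: $\widetilde{I^2}$ is the stable value of $I^{2+j}:I^j$ for $j\gg 0$, and Proposition \ref{s*conjec} only yields $I^3:I=I^2$ once one already knows $s^*(I)\leq 2$, which is essentially what is being proved. Likewise, verifying the colon condition for $n=1,2$ and for $n\gg 0$ does not cover all $n$ unless you show the threshold for ``$n\gg 0$'' is $3$, which is again the point at issue; and the appeal to ``a direct manipulation as in Theorem \ref{Mafigene}'' or to Huneke-style length bookkeeping is a gesture, not an argument. The paper closes this gap by quoting \cite[Corollary 2.3]{Mafi2}: when ${\rm r}_J(I)=2$, the single equality $I^2:x=I$ already forces $\widetilde{I^n}=I^n$ for all $n\geq 1$.

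That said, the gap is fillable by a short colon-chase you almost wrote down, which would bypass the citation of \cite{Mafi2} entirely. Since $x_1,x_2$ is a regular sequence and $I^{n+1}=JI^n$ for $n\geq 2$, one proves $I^{n+1}:x_1=I^n$ by induction on $n\geq 2$ with base case $I^2:x_1=I$: if $ax_1\in I^{n+1}=x_1I^n+x_2I^n$, write $ax_1=x_1b+x_2c$ with $b,c\in I^n$; then $x_1(a-b)=x_2c$ forces $c=x_1c'$ (as $(x_1):x_2=(x_1)$) and $a-b=x_2c'$ with $c'\in I^n:x_1=I^{n-1}$ by the inductive hypothesis, whence $a\in I^n+x_2I^{n-1}\subseteq I^n$. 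This gives $I^{n+1}:x_1=I^n$ for every $n$, hence the initial form of $x_1$ is a nonzerodivisor on $\mathcal{G}(I)$ and $\mathrm{grade}\,\mathcal{G}(I)_{+}\geq 1$. As submitted, however, the proposal does not contain this (or any complete) argument for the crucial step, and one of its stated shortcuts is incorrect.
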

\begin{proof} From \cite[Fact 9]{Heinzer-Johnson-Lantz-Shah} we have $\mathrm{grade}\,\mathcal{G}(I)_{+} \geq 1$ if and only if all powers of $I$ are Ratliff-Rush closed. In particular, if $\mathrm{grade}\,\mathcal{G}(I)_{+} \geq 1$ then $\widetilde{I}=I$. Conversely, suppose  $\widetilde{I}=I$.  First, if  ${\rm r}_{J}(I) \leq 1$ then ${\rm r}(I) \leq 1$ and it follows from Lemma \ref{rednum1} that $\widetilde{I^{n}}=I^n$ for all $n \geq 1$. Now if ${\rm r}_{J}(I) = 2$, with say $J=(x, y)$ (note that by letting $M=R$ in Lemma \ref{LemmaRV}, or alternatively by \cite[Lemma 1.2]{Rossi-Dinh-Trung}, we can take $\{x, y\}$ as being a superficial sequence for $I$), then using Proposition \ref{regelem}(b) we can write $$I \, \subseteq \, I^{2} : x \, \subseteq \, \widetilde{I^{2}} : x \, = \, \widetilde{I} \, = \, I,$$ which gives $I^2 : x = I$. By  \cite[Corollary 2.3]{Mafi2} (which requires ${\rm r}_{J}(I) = 2$) we get $\widetilde{I^{n}}=I^n$ for all $n \geq 1$. Therefore, $\mathrm{grade}\,\mathcal{G}(I)_{+} \geq 1$ in both cases. \qed
\end{proof}

\begin{Proposition}\label{genItoh}
Let $(R, \M)$ be a two-dimensional Cohen-Macaulay local ring with infinite residue field, $I$ an $\M$-primary ideal with $\widetilde{I} = I$ and $J$ a minimal reduction of $I$. Then the following assertions are equivalent:
	\begin{itemize}
		\item[(a)] ${\rm r}_{J}(I) \leq 2$;
		\item[(b)] $H_{I}(n) = P_{I}(n)$ for all $n \geq 1$;
		\item[(c)] $H_{I}(n) = P_{I}(n)$ for $n = 1, 2$.
	\end{itemize}
\end{Proposition}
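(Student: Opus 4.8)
The plan is to establish the cycle of implications $(a)\Rightarrow(b)\Rightarrow(c)\Rightarrow(a)$, exploiting the standing hypothesis $\widetilde{I}=I$ together with Lemma \ref{2lema}, the earlier results of the paper, and the well-known comparison between the Hilbert–Samuel function and the regularity of $\mathcal{G}(I)$. First I would note that, for $(a)\Rightarrow(b)$, the assumption ${\rm r}_J(I)\leq 2$ combined with $\widetilde{I}=I$ forces $\mathrm{grade}\,\mathcal{G}(I)_+\geq 1$ by Lemma \ref{2lema}; in fact the proof of that lemma shows all powers of $I$ are Ratliff–Rush closed, i.e. $s^*(I)=1$. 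Then Corollary \ref{corMafigen} (or Corollary \ref{Mafigene2}) with $M=R$ applies — since $\widetilde{I^{{\rm r}}}=I^{{\rm r}}$ holds automatically (either ${\rm r}=0$, or $s^*(I)=1$) — to yield $\mathrm{reg}\,\mathcal{R}(I)=\mathrm{reg}\,\mathcal{G}(I)={\rm r}_J(I)\leq 2$. The standard relation between regularity and the postulation number in the Cohen–Macaulay case (using $\mathrm{grade}\,\mathcal{G}(I)_+\geq 1$ and Lemma \ref{LemmaM}, or directly a result like \cite[Theorem 2.1]{Marley2} or \cite[Lemma 1.2]{Marley2}) gives $\rho(I)=\mathrm{reg}\,\mathcal{G}(I)-2\leq 0$, hence $H_I(n)=P_I(n)$ for all $n\geq 1$ by definition of the postulation number.

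The implication $(b)\Rightarrow(c)$ is trivial. For $(c)\Rightarrow(a)$, I would argue contrapositively: suppose ${\rm r}_J(I)\geq 3$ and derive $H_I(n)\neq P_I(n)$ for some $n\in\{1,2\}$. The natural route is again through Lemma \ref{2lema}, which tells us $\mathrm{grade}\,\mathcal{G}(I)_+\geq 1$ may now \emph{fail} (the lemma only covers ${\rm r}_J(I)\leq 2$), so instead I expect to use the relation between $\rho(I)$ and $\mathrm{reg}\,\mathcal{G}(I)$ from the depth-$0$ analysis already carried out in the proof of Corollary \ref{strunk}: when $\mathrm{grade}\,\mathcal{G}(I)_+=0$ one has $a_0(\mathcal{G}(I))<a_1(\mathcal{G}(I))$, and depending on the comparison of $a_1$ and $a_2$ either $\mathrm{reg}\,\mathcal{G}(I)={\rm r}_J(I)$ or $\mathrm{reg}\,\mathcal{G}(I)=\rho(I)+1$. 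In either subcase, $\mathrm{reg}\,\mathcal{G}(I)\geq 3$ when ${\rm r}_J(I)\geq 3$ (using ${\rm r}_J(I)\leq\mathrm{reg}\,\mathcal{G}(I)$ from Remark \ref{cotareg}), which forces $\rho(I)\geq 1$ (either directly, or via $a_1(\mathcal{G}(I))=\rho(I)$ as in Corollary \ref{strunk}'s proof), and hence $H_I(\rho(I))\neq P_I(\rho(I))$ with $\rho(I)\geq 1$ — but one must then pin this down to $n=1$ or $n=2$. Alternatively, and perhaps more cleanly, I would invoke a known bound such as $\rho(I)\leq {\rm r}(I)-1$ in dimension $2$ together with the Ratliff–Rush hypothesis to squeeze the relevant $n$; the case ${\rm r}_J(I)\leq 2$ being already handled, the remaining case ${\rm r}_J(I)=2$ would need to be separated out to check it genuinely gives equality at $n=1,2$, while ${\rm r}_J(I)\geq 3$ gives a strict inequality at one of those values.

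The main obstacle I anticipate is the precise bookkeeping in $(c)\Rightarrow(a)$: translating the statement "${\rm r}_J(I)\geq 3$" into a \emph{specific} failure $H_I(n)\neq P_I(n)$ at $n=1$ or $n=2$, rather than merely at some unspecified $n$. This requires knowing how $\rho(I)$ and the $a_i(\mathcal{G}(I))$ behave under the hypothesis $\widetilde{I}=I$ with ${\rm r}_J(I)=2$ exactly (the boundary case), and ruling out that the failure could occur only at $n\geq 3$. I expect the resolution to come from a result in the literature on the two-dimensional Cohen–Macaulay case (along the lines of Itoh's or Mafi's work, e.g. \cite[Corollary 2.3]{Mafi2} or \cite{Itoh}) giving that, under $\widetilde{I}=I$, the condition $H_I(n)=P_I(n)$ for $n=1,2$ already propagates to all $n$ and is equivalent to ${\rm r}_J(I)\leq 2$; the genuinely new content of the proposition is removing the extra hypotheses those earlier results required, which the present paper's machinery (Corollary \ref{corMafigen}, Proposition \ref{regelem}, Lemma \ref{2lema}) makes possible.
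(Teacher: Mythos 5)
Your overall architecture matches the paper's where it is complete: for (a)$\Rightarrow$(b) the paper likewise uses Lemma \ref{2lema} to get $\mathrm{grade}\,\mathcal{G}(I)_{+}\geq 1=d-1$ and then Lemma \ref{LemmaM} to write ${\rm r}(I)=\rho(I)+2$, hence $\rho(I)\leq {\rm r}_J(I)-2\leq 0$ (your detour through Corollary \ref{corMafigen} and the regularity is harmless but not needed), and (b)$\Rightarrow$(c) is trivial in both treatments.

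The genuine gap is (c)$\Rightarrow$(a), which your proposal discusses but does not prove. Your contrapositive attempt only yields that $H_I(n)\neq P_I(n)$ at $n=\rho(I)\geq 1$, and, as you yourself concede, nothing localizes that failure to $n\in\{1,2\}$; worse, once ${\rm r}_J(I)\geq 3$ you can no longer invoke Lemma \ref{2lema}, so you do not control $\mathrm{grade}\,\mathcal{G}(I)_{+}$ and cannot even assume ${\rm r}_J(I)$ is independent of $J$, which undermines the use of Lemma \ref{LemmaM} in that branch. The paper does not attempt this route: it proves the equivalence (a)$\Leftrightarrow$(c) by citing \cite[Proposition 16]{Itoh} directly --- for (a)$\Rightarrow$(c) after first using Lemma \ref{2lema} to see that all powers of $I$ are Ratliff-Rush closed so that Itoh's hypotheses are met, and for (c)$\Rightarrow$(a) as part of the same cited equivalence --- and only then supplies (a)$\Rightarrow$(b) by the postulation-number argument. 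Your closing "expectation" that an Itoh-type result closes the loop is the correct guess, but until that specific result is actually invoked (it is Itoh's Proposition 16, not \cite[Corollary 2.3]{Mafi2}, which concerns Ratliff-Rush closures of powers rather than Hilbert functions), the implication (c)$\Rightarrow$(a) remains unestablished in your write-up.
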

\begin{proof} Assume (a). By Lemma \ref{2lema} we  get $\mathrm{grade}\,\mathcal{G}(I)_{+} \geq 1$, which as we know is equivalent to $\widetilde{I^{n}} = I^{n}$ for all $n \geq 1$. Then (c) follows from \cite[Proposition 16]{Itoh}, which also gives the implication (c) $\Rightarrow$ (a). Thus (a) and (c) are equivalent. The implication (b) $\Rightarrow$ (c) is obvious. Finally let us show that (a) $\Rightarrow$ (b). If ${\rm r}_{J}(I) \leq 2$ then Lemma \ref{2lema} yields $\mathrm{grade}\,\mathcal{G}(I)_{+} \geq 1=d-1$, so we can apply Lemma \ref{LemmaM} and obtain ${\rm r}(I)  =  \rho(I) + 2$. Therefore, $\rho(I)  =  {\rm r}(I) - 2  \leq  {\rm r}_J(I) - 2  \leq   2 - 2  =  0$, which gives (b). \qed
\end{proof}

\begin{Remark}\rm Besides correcting  Mafi's proposition (as explained above), our Proposition \ref{genItoh} also sharpens independent results by Hoa,  Huneke, and Itoh (see \cite[Theorem 3.3]{Hoa}, \cite[Theorem 2.11]{Huneke}, and \cite[Proposition 16]{Itoh}, respectively), where additional hypotheses are required.
\end{Remark}

\subsection{Negative answer to a question of Corso, Polini, and Rossi} In this last part, recall that the integral closure of an ideal $I$ of a Noetherian ring is the set $\overline{I}$ formed by the elements $r$ satisfying an equation of the form $$r^m+a_1r^{m-1}+\ldots + a_{m-1}r+a_m=0, \quad a_i\in I^i, \quad i=1, \ldots, m.$$ Clearly, $\overline{I}$ is an ideal containing $I$. The ideal $I$ is integrally closed (or complete) if $\overline{I}=I$. Moreover, $I$ is
{\it normal} if $I^j$ is integrally closed for every $j\geq 1$. 

Let $(R, \M)$ be a Gorenstein local ring (with positive dimension and infinite residue field), and let $I$ be a normal $\M$-primary ideal of $R$. Then following problem appeared in \cite[Question 4.4]{CPR}: Does ${\rm e}_3(I)=0$ imply ${\rm r}(I)=2$\,? It has been also pointed out that an affirmative answer to this question is given in \cite{Itoh} in the case $I=\mathfrak{m}$ (recall that $\mathfrak{m}$ may not be normal in general). 

Now note that if ${\rm dim}\,R=2$ then the condition ${\rm e}_3(I)=0$ holds trivially. So, the 2-dimensional case of the above question is: If $I$ is a normal $\M$-primary ideal of a 2-dimensional Gorenstein local ring, is it true that ${\rm r}(I)=2$\,? Below we
answer this question negatively. In our example, the ideal $I$ satisfies in addition the property ${\rm e}_2(I)=0$ (indeed, $I$ is Ulrich and this vanishing was observed in the proof of Proposition \ref{HS-poly}). Thus we may also consider a version of the Corso-Polini-Rossi problem by adding the hypothesis ${\rm e}_2(I)\neq 0$, which we hope to pursue in a future work. 


\begin{Proposition} There exists a normal $\M$-primary ideal $I$, in a $2$-dimensional local hypersurface ring, such that ${\rm r}(I)=1$.
\end{Proposition}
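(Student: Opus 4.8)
The plan is to exhibit an explicit example. Take the complete local hypersurface ring $R=\mathbb{C}[[x,y,z]]/(x^3+y^5+z^2)$, i.e. the $E_8$ rational double point, so that $\dim R=2$ and $R$ is Gorenstein, and let $I=(x,y^2,z)R$ be the ideal considered in Example \ref{E8}. By \cite[Example 7.2]{Goto-et-al-2} (see also Example \ref{E8}) this $I$ is Ulrich, with ${\rm e}_0(I)=4$ and $\lambda(R/I)=2$; since $I$ cannot be generated by fewer than $3$ elements while $\dim R=2$, it is not a parameter ideal, so Corollary \ref{reg=red/ulrich} gives ${\rm r}(I)=1$. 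Hence the only remaining point is to prove that $I$ is normal, and that is where the work lies.

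First I would check that $I$ is integrally closed. A direct computation shows $\M^2\subseteq I\subseteq\M$: indeed $x^2,xy,xz\in(x)\subseteq I$, $\,y^2\in I$, and $yz,z^2\in(z)\subseteq I$. Therefore $\overline{I}$ lies between $\M^2$ and $\overline{\M}$, and $\overline{\M}=\M$ because $\overline{\M}\subseteq\sqrt{\M}=\M$. Thus $\overline{I}$ corresponds to a $\mathbb{C}$-subspace of $\M/\M^2\cong\mathbb{C}^3$ containing the $2$-dimensional subspace $I/\M^2$ spanned by the (independent) images of $x$ and $z$ --- note $y^2\in\M^2$ --- whence either $\overline{I}=I$ or $\overline{I}=\M$. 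But integral closure preserves Hilbert--Samuel multiplicity, so $e(\overline{I})=e(I)={\rm e}_0(I)=4$, whereas $e(\M)$ is the order of $x^3+y^5+z^2$, namely $2$. Thus $\overline{I}\neq\M$, forcing $\overline{I}=I$.

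Finally, to upgrade integral closedness to normality I would invoke the theory of complete ideals in a two-dimensional rational singularity: since the $E_8$ singularity is rational, the product of two integrally closed $\M$-primary ideals of $R$ is again integrally closed (Lipman's multiplicativity theorem; cf. \cite[Chapter 14]{Swanson-Huneke} for the regular-local case). In particular $\overline{I^n}=I^n$ for every $n\geq 1$, i.e. $I$ is normal, which completes the proof. The main obstacle is exactly this last reduction, since there is no elementary passage from integral closedness of $I$ to that of all its powers; once Lipman's theorem is at hand the argument is immediate. An alternative, more hands-on route would be to prove directly that $\mathcal{R}(I)$ is a normal domain --- it is Cohen--Macaulay because $\mathcal{G}(I)$ is Cohen--Macaulay with $a$-invariant ${\rm r}(I)-2=-1<0$, so one would be left only checking Serre's condition $(R_1)$. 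For the record, this $I$ settles the two-dimensional case of the Corso--Polini--Rossi question negatively: ${\rm e}_3(I)=0$ holds vacuously (and even ${\rm e}_2(I)=0$), yet ${\rm r}(I)=1\neq 2$.
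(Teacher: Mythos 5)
Your proposal is correct and follows essentially the same route as the paper: the same ideal $I=(x,y^2,z)$ in the $E_8$ double point, the same appeal to Corollary \ref{reg=red/ulrich} (via the Ulrich property and $\nu(I)=3>2$) to get ${\rm r}(I)=1$, and the same invocation of Lipman's theorem on products of complete ideals in rational surface singularities to pass from $\overline{I}=I$ to normality. The only real divergence is in checking that $I$ is integrally closed: the paper argues directly that $y\notin\overline{I}$ (using $x^3+z^2\notin I^5$), whereas you trap $\overline{I}$ between $\M^2$ and $\M$, observe that the only ideals in that range containing $I$ are $I$ and $\M$, and rule out $\M$ by comparing multiplicities $e(I)=4\neq 2=e(\M)$; your version is a touch longer but arguably more self-contained and verifiable. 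Both arguments are sound, so no correction is needed.
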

\begin{proof} Consider the rational double point $R={\mathbb C}[[x, y, z]]/(x^3+y^5+z^2)$ and the ideal $I=(x, y^2, z)R$. As mentioned in Example \ref{E8}, the ideal $I$ is Ulrich. As such, being in addition a non-parameter ideal, it satisfies ${\rm r}(I)=1$ (see the proof of Corollary \ref{reg=red/ulrich}). It remains to show that $I$ is normal. First, notice that $I$ is integrally closed, because
$x^3+z^2\notin I^5$ and hence $y\notin \overline{I}$. Now, since $R$ is a rational surface singularity, the normality of $I$ follows by \cite[Theorem 7.1]{Lip}. 
\qed
\end{proof}

\bigskip

\noindent{\bf Acknowledgements.} The first author was partially supported by the CNPq-Brazil grants 301029/2019-9 and 406377/2021-9. The second author was supported by a CAPES Doctoral Scholarship.

\end{document}